\newcommand{\bea}{\begin{eqnarray}} 
\newcommand{\eea}{\end{eqnarray}} 
\newcommand{\bee}{\begin{eqnarray*}} 
\newcommand{\eee}{\end{eqnarray*}} 
\newcommand{\al}{\begin{align*}} 
\newcommand{\eal}{\end{align*}} 
\newcommand{\be}{\begin{equation}} 
\newcommand{\ee}{\end{equation}} 
\newcommand{\bem}{\begin{pmatrix}} 
\newcommand{\eem}{\end{pmatrix}}
\def\a{\alpha}
\def\th{\theta}
\newcolumntype{R}{ >{$}r <{$}}
\newcolumntype{C}{ >{$}c <{$}}
\newcolumntype{L}{ >{$}l <{$}}
\newcolumntype{F}{>{\centering\arraybackslash}m{1.5cm}}
\newcommand{\mc}[1]{\mathcal{#1}}
\newcommand{\ms}[1]{\mathscr{#1}}
\newcommand{\comment}[1]{}
\DeclareSymbolFont{cyrletters}{OT2}{wncyr}{m}{n}\DeclareMathSymbol{\Sha}{\mathalpha}{cyrletters}{"58}
\newcommand{\RR}{{\mathbb R}}
\newcommand{\CC}{{\mathbb C}}
\newcommand{\PP}{{\mathbb P}}
\newcommand{\ZZ}{{\mathbb Z}}
\newcommand{\QQ}{{\mathbb Q}}
\newcommand{\HH}{{\mathbb H}}
\newcommand{\JJ}{{\mathbb J}}
\newcommand{\tr}{\operatorname{{tr}}}
\newcommand{\ex}{\operatorname{e}} 
\newcommand{\num}{\operatorname{num}}
\newcommand{\indo}{\iota}
\newcommand{\wh}{{\rm wh}}
\newcommand{\opt}{{\rm opt}}
\newcommand{\xmod}{{\rm \;mod\;}}
\newcommand{\Av}{\operatorname{Av}}
\newcommand{\Th}{\Theta}
\newcommand{\opc}{{\rm c}}
\newcommand{\ua}{\upalpha}
\newcommand{\SL}{\operatorname{\textsl{SL}}}      
\newcommand{\mpt}{\widetilde{\SL}_2}      
\newcommand{\GL}{{\textsl{GL}}}      
\newcommand{\sM}{\mathsf{M}}
\newcommand{\se}{\mathsf{e}}	
\newcommand{\sg}{\mathsf{g}}	
\newcommand{\sMa}{{\mathsf{M}_{11}}} 
\newcommand{\sMb}{{\mathsf{M}_{12}}}
\newcommand{\sG}{\mathsf{G}}
\newcommand{\GJ}{\textsl{G}^{\rm J}}
\newcommand{\barGJ}{\overline{\textsl{G}}^{\rm J}}
\newcommand{\GammaOJ}{\Gamma_0^{\rm J}}
\newcommand{\GammaooJ}{\Gamma_{\infty}^{\rm J}}
\newcommand{\ON}{\textsf{ON}}	
\renewcommand{\Th}{\textsf{Th}}		
\newcommand{\calH}{\mathscr{H}}
\newcommand{\HGH}{{H}^{\rm Hur}}	
\newcommand{\HCE}{{H}^{\rm Coh}}	
\newcommand{\sHGH}{\ms{H}^{\rm Hur}}	
\newcommand{\sHCE}{\ms{H}^{\rm Coh}}	
\newcommand{\sHR}{\ms{H}^{\rm Rad}}	
\newtheorem{thm}{Theorem}[subsection]
\newtheorem*{thm*}{Theorem}
\newtheorem{pro}[thm]{Proposition}
\theoremstyle{definition}
\theoremstyle{remark}
\newtheorem{rmk}[thm]{Remark}
\numberwithin{equation}{subsection}
\begin{document}

\setstretch{1.26}

\title{
\vspace{-35pt}
\textsc{\huge{ 
{C}lass {N}umbers, {C}ongruent {N}umbers and {U}mbral {M}oonshine
}}
    }

\renewcommand{\thefootnote}{\fnsymbol{footnote}} 
\footnotetext{\emph{MSC2010:} 11F22, 11F37, 11G05, 11G40, 20C34.}     


\renewcommand{\thefootnote}{\arabic{footnote}} 

\author[1,2]{Miranda C.\ N.\ Cheng\thanks{mcheng@uva.nl}\thanks{mcheng@gate.sinica.edu.tw}}
\author[2]{John F.\ R.\ Duncan\thanks{jduncan@gate.sinica.edu.tw}}
\author[3]{Michael H.\ Mertens\thanks{mmertens@math.uni-koeln.de}}

\affil[1]{Institute of Physics and Korteweg-de Vries Institute for Mathematics, University of Amsterdam, Amsterdam, the Netherlands.}
\affil[2]{Institute of Mathematics, Academia Sinica, Taipei, Taiwan.}
\affil[3]{Department Mathematik/Informatik, Abteilung Mathematik, Universit\"at zu K\"oln, Weyertal 86--90, D-50931 K\"oln, Germany.} 

\date{} 

\maketitle

\abstract{
In earlier work we initiated a program to study relationships between finite groups and arithmetic geometric invariants of modular curves in a systematic way. 
In the present work we continue this program, with a focus on the two smallest sporadic simple Mathieu groups. 
To do this we first elucidate a connection between a special case of umbral moonshine and the imaginary quadratic class numbers.
Then we use this connection to classify a distinguished set of modules for the smallest sporadic Mathieu group. 
Finally we establish consequences of the classification for the congruent number problem of antiquity.
}

\clearpage

\tableofcontents

\section{Introduction}\label{sec:int}

In earlier work \cite{cncsga} we initiated a program to systematically study interrelationships between representations of finite groups and arithmetic-geometric invariants. We also took a first step in this program 
by 
revealing families of such 
relationships for the 
cyclic 
groups of prime order. 
Since the 
cyclic 
groups 
of prime order
are the simplest simple groups, 
it is natural to ask if the module structures of op.\ cit.\ extend to larger groups, and if so with what consequences for arithmetic. In this work we 
answer these questions in 
a special case.

\subsection{Motivation}\label{sec:int-mtv}

Relationships between a distinguished representation of the sporadic simple group $\ON$ of O'Nan and the arithmetic of certain elliptic curves were 
illuminated in \cite{MR4291251} (see also \cite{2017NatCo...8..670D,MR4139238}), and an analogous story for the sporadic simple group $\Th$ of Thompson was told in \cite{MR4230542}.
This motivated us to initiate a program to study such phenomena systematically, and  
our first contribution \cite{cncsga} to this program is an analysis of the case that we cast the cyclic simple groups 
in the roles played by 
$\ON$ and $\Th$
in \cite{MR4291251} and \cite{MR4230542}.
As a result we obtain 
arithmetic relationships between distinguished infinite-dimensional virtual graded modules for cyclic groups of prime order and imaginary quadratic twists of the modular Jacobians of prime conductor. 

To put this more concretely we note that Theorem 4.1.1 of \cite{cncsga} guarantees the existence of 
an infinite-dimensional virtual graded $\ZZ/N\ZZ$-module 
\begin{gather}\label{eqn:int-mtv:WZNZ}
W=\bigoplus_D W_D, 
\end{gather}
for each prime $N$, with the property that 
\begin{gather}\label{eqn:int-mtv:dimWD}
\dim W_D=12\num\left(\frac{N+1}{6}\right)h(D)
\end{gather} 
when $D<0$ is a fundamental discriminant (i.e.\ the discriminant of $\QQ(\sqrt{D})$), 
where $\num(\a)$ denotes the numerator of a rational number $\a$ and $h(D)$ denotes the class number 
of $\QQ(\sqrt{D})$. 
Theorem 4.2.1 of op.\ cit.\ translates 
this property (\ref{eqn:int-mtv:dimWD})
into a congruence condition on $\dim W_D$ that obstructs the existence of infinite-order rational points on the $D$-twist 
of the Jacobian $J_0(N)$ of the modular curve $X_0(N)$, for suitably chosen $D$.

For example, $J_0(19)$
is the elliptic curve defined by the equation
	$y^2=x^3-12096x-544752$
(see e.g.\ \cite{lmfdb}), 
and the $D$-twist of $J_0(19)$ is
the elliptic curve defined by
\begin{gather}\label{eqn:int-mtv:J0(19)D}
y^2 = x^3 - 12096D^2x -  544752D^3.
\end{gather}
Taking $N=19$ in Theorem 4.2.1 of \cite{cncsga} we obtain that 
if $D<0$ is a fundamental discriminant such that 
$19$ is inert in the ring of integers of $\QQ(\sqrt{D})$, 
and 
\begin{gather}\label{eqn:int-mtv:hDcong}
	h(D)\not\equiv 0\xmod 3,
\end{gather}
then the curve 
(\ref{eqn:int-mtv:J0(19)D})
has only finitely many rational points. 
(Cf.\ Corollary 4.2.2 of op.\ cit., which is a counterpart to this statement for $N=11$.)

The reader may be interested to know how this compares to the results of the motivating works \cite{MR4291251} and \cite{MR4230542}. 
With this in mind we point out the following 
consequence of Theorem 1.3 of \cite{MR4291251}:
If 
$D<0$ is a fundamental discriminant such that 
$19$ is inert in the ring of integers of $\QQ(\sqrt{D})$, 
and 
\begin{gather}\label{eqn:int-mtv:dimWONDcong}
\dim W_D^\ON \not\equiv -24h(D) \xmod 19,
\end{gather}
where $W^\ON=\bigoplus_D W_D^\ON$ is the 
$\ON$-module 
considered in op.\ cit.\
(with $W^\ON_D$ here denoted by $W_{-D}$ there), 
then the curve 
(\ref{eqn:int-mtv:J0(19)D})
again has only finitely many rational points. 

By a similar token Theorem 1.3 of \cite{MR4230542} tells us that if 
$D<0$ is a fundamental discriminant such that 
$19$ is inert in the ring of integers of $\QQ(\sqrt{D})$, 
and
\begin{gather}\label{eqn:int-mtv:dimWThDcong}
\dim W_D^\Th \not\equiv 0 \xmod 19,
\end{gather}
where $W^\Th=\bigoplus_D W_D^\Th$ is 
any of the $\Th$-modules considered in op.\ cit.\ 
(with $W^\Th_D$ here denoted by $W_{-D}$ there), 
then the curve 
(\ref{eqn:int-mtv:J0(19)D})
yet again has only finitely many rational points. 

Note that the class number $h(D)$ does not appear in (\ref{eqn:int-mtv:dimWThDcong}). 
Tracing through the arguments of \cite{MR4230542} we see that this is because the graded dimension 
\begin{gather}\label{eqn:int-mtv:sumDdimWThDqD}
\sum_{D\leq 5} \dim W^\Th_Dq^{-D}
=6q^{-5}+O(q^3)
\end{gather} 
has vanishing constant term. 
In (\ref{eqn:int-mtv:hDcong}) we have the other extreme: We can omit $\dim W_D$ from (\ref{eqn:int-mtv:hDcong}) because it is a fixed multiple (\ref{eqn:int-mtv:dimWD}) of $h(D)$ by construction.

Theorem 4.2.1 of \cite{cncsga} furnishes obstruction statements similar 
to 
those of 
(\ref{eqn:int-mtv:hDcong}-\ref{eqn:int-mtv:dimWThDcong})
for each prime $N$. 
This demonstrates
that the results of \cite{MR4230542,MR4291251}, involving the sporadic simple groups of O'Nan and Thompson, are part of a more 
expansive picture.

It is natural to ask now how expansive this picture is.
To explore this general question we formulate the following more specific ones:
\begin{enumerate}
\item
Are the cyclic group module structures on the $W$ of (\ref{eqn:int-mtv:WZNZ}-\ref{eqn:int-mtv:dimWD}) restrictions of module structures for more sophisticated groups?
\item
If so, do these more sophisticated group-module structures 
on the $W$ of (\ref{eqn:int-mtv:WZNZ}-\ref{eqn:int-mtv:dimWD}) entail 
more sophisticated relationships to arithmetic geometry, than those formulated in (\ref{eqn:int-mtv:hDcong}-\ref{eqn:int-mtv:dimWThDcong}) above?
\end{enumerate}

In this work we 
answer these questions positively in
the ``simplest'' non-trivial case, by which we mean the case that $N=11$, since this is the smallest prime $N$ for which 
 $J_0(N)$ is not just a point.

\subsection{Methods}\label{sec:int-mth}

As in \cite{cncsga} 
our focus is on
virtual graded $\sG$-modules $W=\bigoplus_D W_D$, for $\sG$ a finite group, with the property that the {\em McKay--Thompson series}
\begin{gather}\label{eqn:int-mth:phiWsg}
	\phi^W_\sg(\tau,z):=\sum_{n,s\in\ZZ}\tr(\sg|W_{s^2-4n})q^ny^s
\end{gather}
defines an optimal holomorphic mock Jacobi form of weight $2$ and index $1$ 
and level $o(\sg)$
for each $\sg\in \sG$, 
once we set $q=e^{2\pi i \tau}$ and $y=e^{2\pi i z}$. We recall holomorphic mock Jacobi forms, and our notion of level, in \S~\ref{sec:prp-jac}.
The most important requirement is optimality, which we explain in detail in \S~\ref{sec:prp-opt}. 
In short it is the condition that the associated McKay--Thompson series $\phi^W_\sg$ (\ref{eqn:int-mth:phiWsg}) have a common fixed constant term, and have theta-coefficients $h^W_{\sg,r}$ (see (\ref{eqn:prp-jac:thetadecomp}) and (\ref{eqn:prp-opt:W}-\ref{eqn:prp-opt:hWsgr})) that vanish away from the infinite cusps of their invariance groups (cf.\ (\ref{eqn:prp-opt:phi_optimal})). In 
\cite{cncsga} the invariance group of $\phi^W_\sg$ is always assumed to be $\GammaOJ(o(\sg))$ (cf.\ (\ref{eqn:prp-jac:GammaooJ})). Here we allow for more general choices, and also allow characters.

A key tool in this work with no counterpart in \cite{cncsga} is a connection between 
meromorphic Jacobi forms in weights $1$ and $2$, which, via the methods of \cite{Dabholkar:2012nd}, may be used to connect suitable mock Jacobi forms in weights $1$ and $2$.
The McKay--Thompson series of umbral moonshine (see \cite{UM,MUM,MR3766220}) are naturally packaged as mock Jacobi forms of weight $1$, so there is a chance that we can use this connection to translate umbral moonshine into the setting of this work. 
As we demonstrate in \S~\ref{sec:mat-um3}, this works out especially well in  
the $\ell=3$ case of umbral moonshine, in that it yields for us a virtual graded module 
for the unique non-trivial double cover $2.\sMb$ of the sporadic simple Mathieu group $\sMb$, with McKay--Thompson series that have weight $2$ and index $1$, and that are optimal in the sense just sketched. Moreover, by restriction we recover from this optimal $2.\sMb$-module an optimal module for $\ZZ/11\ZZ$, such as 
in (\ref{eqn:int-mtv:WZNZ}-\ref{eqn:int-mtv:dimWD}).

As a result, through consideration of the specific case that $N=11$, we obtain a positive answer to the first question we formulated at the end of \S~\ref{sec:int-mtv}:
The cyclic group module structure on the $W$ of (\ref{eqn:int-mtv:WZNZ}-\ref{eqn:int-mtv:dimWD}) can arise as the restriction of something more intricate. 	

It is a curious fact that $\sMb$ contains two conjugacy classes of subgroups isomorphic to the smallest sporadic simple group, $\sMa$. 
To distinguish them, fix a non-trivial permutation representation $\sMb\to {\sf S}_{12}$ of $\sMb$ on $12$ points.
Then one of the classes 
is composed of the $12$ subgroups of $\sMb$ that fix one of the $12$ points.
The $12$ copies of $\sMa$ in the other class each act transitively on these $12$ points.
The group $\sMa$ has trivial Schur multiplier (cf.\ e.g.\ \cite{atlas}), so each of the $24$ copies of $\sMa$ in $\sMb$ pulls back to a direct product $2\times \sMa$ in $2.\sMb$.

Thus there are essentially two ways to restrict the optimal module for $2.\sMb$ in weight $2$ and index $1$ that comes to us from umbral moonshine to an optimal module of weight $2$ and index $1$ for $\sMa$. Since umbral moonshine naturally equips the umbral group $2.\sMb$ at $\ell=3$ with a permutation representation $2.\sMb\to {\sf S}_{12}$, it is natural to refer to these two optimal modules for $\sMa$ as {\em transitive} and {\em intransitive}, according as the corresponding composition
\begin{gather}\label{eqn:int-mth:M11toS12}
	\sMa\to 2.\sMb\to {\sf S}_{12}
\end{gather}
is transitive or not. From this perspective the transitive optimal module is distinguished by associating McKay--Thompson series with non-trivial characters to elements of order $4$ and $8$ in $\sMa$, while $\phi^W_\sg$ is a mock Jacobi form for $\GammaOJ(o(\sg))$ for all the $\sg$ arising from the intransitive optimal module. 
Thus the smallest sporadic simple group fits naturally into our framework in two different ways. We elucidate this 
further by making it our focus in \S\S~\ref{sec:mat-opt}-\ref{sec:mat-art}.

\subsection{Results}\label{sec:int-res}

In \S~4.1 of \cite{cncsga} we classified optimal virtual graded $\sG$-modules of weight $2$ and index $1$, for $\sG$ a cyclic group of prime order. In \S~4.2 of this work we carry out a directly similar classification but for $\sG$ the sporadic simple Mathieu group $\sMa$. In fact we establish two such classifications, corresponding to the two optimal modules for $\sMa$ that come to us from umbral moonshine, as we have just discussed. 

More specifically, we first determine the constant 
$\opc^\opt_{2,1}(\sG)$ 
and the lattice 
$\mc{L}^\opt_{2,1}(\sG)$ for $\sG=\sMa$, 
where $\opc^\opt_{2,1}(\sG)$ and $\mc{L}^\opt_{2,1}(\sG)$ are as defined in \S~\ref{sec:prp-opt} (this being the same as the definition in \cite{cncsga}). 
This classification includes the intransitive optimal module for $\sMa$, and is the content of Theorem \ref{thm:mat-opt:tor}. 

We then determine $\opc^\opt_{2,1}(\sG,\ua)$ and $\mc{L}^\opt_{2,1}(\sG,\ua)$, again for $\sG=\sMa$, but with $\ua$ encoding the multiplier systems that arise from the transitive optimal module for $\sMa$.
As we explain in \S~\ref{sec:prp-opt}, we define the constant $\opc^\opt_{2,1}(\sG,\ua)$ and the lattice $\mc{L}^\opt_{2,1}(\sG,\ua)$ analogously to $\opc^\opt_{2,1}(\sG)$ and $\mc{L}^\opt_{2,1}(\sG)$, respectively, but take into account the multiplier system data of $\ua$. The determination of $\opc^\opt_{2,1}(\sG,\ua)$ and $\mc{L}^\opt_{2,1}(\sG,\ua)$ here is the content of Theorem \ref{thm:mat-opt:tra}, and the classification this implies includes the transitive optimal module for $\sMa$.

It develops that 
$\opc^\opt_{2,1}(\sG)$ and $\opc^\opt_{2,1}(\sG,\ua)$ coincide for $\sG=\sMa$ and $\ua$ as arising from the transitive module. 
It is significant that this common value also coincides with $\opc^\opt_{2,1}(\sG)$ for $\sG=\ZZ/11\ZZ$, since this means that every optimal module 
for $\sG=\ZZ/11\ZZ$ extends to an optimal module for $\sMa$, and extends in two essentially different ways, 
depending on the triviality or not of the multiplier systems.

It is at least as significant that $\mc{L}^\opt_{2,1}(\sG)$ is the same for $\sG=\sMa$ and $\sG=\ZZ/11\ZZ$, but differs from $\mc{L}^\opt_{2,1}(\sG,\ua)$ for $\sG=\sMa$ and $\ua$ as in the transitive optimal module.
Indeed, it turns out that the latter lattice properly contains the former, because there is a non-zero cuspidal Jacobi form of weight $2$ and index $1$ for $\GammaOJ(8)$ that 
is consistent with $\ua$ as in the transitive optimal module.
It is the involvement of this cusp form that allows us to draw out a new kind of connection between groups and geometry in this work.

To explain this recall that a positive integer $n$ is called {\em congruent} if it occurs as the area of a right triangle with rational side lengths. 
That is, $n$ is congruent if there exist $a,b,c\in \QQ$ such that $a^2+b^2=c^2$ and $n=\frac12ab$. 
The problem of determining which positive integers are congruent goes back to Diophantus, and remains unsolved to this day. We refer to \cite{MR3713731} for a recent review.
In \S~\ref{sec:mat-art} we show that the representation theory of the transitive optimal module for $\sMa$ obstructs the existence of congruent numbers. Specifically, it follows from Theorem \ref{thm:mat-art:cngnmb} that 
if $W=\bigoplus_D W_D$ is the transitive optimal module for $\sMa$, 
and 
if $n$ is square-free and congruent to $3$ modulo $24$, 
then $n$ is not a congruent number whenever $W_{-n}$ contains the unique $55$-dimensional irreducible $\sMa$-module (cf.\ Table \ref{tab:chars:m11}) with non-zero multiplicity.

Thus we also answer in the affirmative, at least for $N=11$, the second question that we formulated the end of \S~\ref{sec:int-mtv}: 
More intricate group-module structures 
on the $W$ of (\ref{eqn:int-mtv:WZNZ}-\ref{eqn:int-mtv:dimWD}) can entail 
more intricate relationships to arithmetic geometry, than those formulated in (\ref{eqn:int-mtv:hDcong}-\ref{eqn:int-mtv:dimWThDcong}).

To conclude we mention that 
we obtain further positive answers to the questions of \S~\ref{sec:int-mtv}
in the forthcoming work \cite{hurm23},
by showing that 1) the optimal $\ZZ/N\ZZ$-modules of (\ref{eqn:int-mtv:WZNZ}-\ref{eqn:int-mtv:dimWD}) extend to modules for the second largest sporadic simple Mathieu group $\sM_{23}$ in the case that $N=23$, and 2) these modules entail 
interdependencies between arithmetic-geometric invariants of elliptic curves with coprime conductors. 

\subsection{Overview}

The structure of this article is as follows. 
We present a guide to the specialized notation that we use in \S~\ref{sec:not}. Then in \S~\ref{sec:prp} we prepare for the statements and proofs of our main results. 
Specifically, we review generalized Hurwitz class numbers, and some related notions, in \S~\ref{sec:prp-cln}, and review mock and meromorphic Jacobi forms in \S~\ref{sec:prp-jac}. 
We discuss the notion of optimality we use, in general weight and index, in \S~\ref{sec:prp-opt}, and describe it in more detail in the special case of weight $2$ and index $1$ in \S~\ref{sec:prp-fcs}. 
Our main results appear in \S~\ref{sec:mat}. We first explain the connection between class numbers and umbral moonshine in \S~\ref{sec:mat-um3}. We then use this to classify optimal modules of weight $2$ and index $1$ for $\sMa$, in two different ways, in \S~\ref{sec:mat-opt}. Finally we use the optimal modules of \S~\ref{sec:mat-opt} to relate the representation theory of $\sMa$ to the congruent number problem in \S~\ref{sec:mat-art}.

\section*{Acknowledgements}

We are grateful to Lea Beneish, Mathew Emerton, Maryam Khaqan, Kimball Martin, K.~Ono, Preston Wake, Eric Zhu and David Zureick-Brown for helpful discussions on topics closely related to the content of this work.
The work of M.C.\ was supported by the National Science and Technology Council of Taiwan (110-2115-M-001-018-MY3), and by a Vidi grant (number 016.Vidi.189.182) from the Dutch Research Council (NWO).
The work of J.D.\ was supported in part by the U.S.\ National Science Foundation (DMS 1601306), the Simons Foundation (\#708354), and the National Science and Technology Council of Taiwan (111-2115-M-001-001-MY2).

\section{Notation}\label{sec:not}

\begin{footnotesize}

\begin{list}{}{
	\itemsep -1pt
	\labelwidth 23ex
	\leftmargin 13ex	
	}
	
\item
[$\Av^{(m)}$]
The averaging operator of index $m$. 
See (\ref{eqn:prp-jac:Avm}).

\item
[$\ua$]
An assignment of discrete groups $\Gamma_\sg<\GJ$ and characters $\rho_\sg:\Gamma_\sg\to \CC^*$ to the elements $\sg$ of a finite group $\sG$.
See (\ref{eqn:prp-opt:alpha}) and cf.\ (\ref{eqn:prp-opt:default_alpha}).

\item
[$\opc$]
The negative of the constant term of a holomorphic mock Jacobi form. See (\ref{eqn:prp-opt:phiequalsminusc}).

\item
[$\opc^\opt_{k,m}(\sG)$]
A shorthand for $\opc^\opt_{k,m}(\sG,\ua)$, for the default choice (\ref{eqn:prp-opt:default_alpha}) of $\ua$.

\item
[$\opc^\opt_{k,m}(\sG,\ua)$]
An invariant we attach to a finite group $\sG$ and choice of $\ua$ in \S~\ref{sec:prp-opt}.
See (\ref{eqn:prp-opt:opcopt}).

\item
[$C_N(D)$]
A shorthand for $C^{(\sMa,\ua)}_\sg(D)$, when $o(\sg)=N$, in the proof of Theorem \ref{thm:mat-art:cngnmb}.

\item
[$C_{\varphi_{8|4}}(D)$]
A coefficient 
in the Fourier expansion of $\varphi_{8|4}$. 
See the proof of Theorem \ref{thm:mat-art:cngnmb}.

\item
[$\delta_N$]
The class function on a group $\sG$ defined by setting $\delta_N(\sg):=1$ in case $o(\sg)=N$, and $\delta_N(\sg)=0$ otherwise. Cf.\ (\ref{eqn:mat-opt:11d}) and (\ref{eqn:mat-opt:8d}).

\item
[$\se$]
The identity element of a finite group $\sG$.
See \S~\ref{sec:prp-opt}.

\item
[$\ex(\cdot)$]
We set $\ex(x):=e^{2\pi i x}$.

\item
[$E$]
The elliptic curve over $\QQ$ defined by $y^2=x^3-x$. 
See the proof of Theorem \ref{thm:mat-art:cngnmb}. 

\item
[$E\otimes D$]
The $D$-twist of the elliptic curve $E$. 
See the proof of Theorem \ref{thm:mat-art:cngnmb}. 

\item
[$\phi^W_\sg$]
The McKay--Thompson series 
associated to
the action of 
$\sg$ on 
$W$. 
See (\ref{eqn:prp-opt:phiWsg}) and (\ref{eqn:prp-fcs:phiWsg}).

\item
[$\varphi_{11}$]
A certain cuspidal Jacobi form of weight $2$ and index $1$ for $\GammaOJ(11)$. 
See (\ref{eqn:mat-opt:varphi11}).

\item
[$\varphi_{8|4}$]
A certain cuspidal Jacobi form of weight $2$ and index $1$ for $\GammaOJ(32)$. 
See (\ref{eqn:mat-opt:varphi84}).

\item
[$\sg$]
An element in a finite group $\sG$.
See \S~\ref{sec:prp-opt}.

\item
[$\sG$]
A finite group.
See \S~\ref{sec:prp-opt}.

\item
[$\GJ$]
The Jacobi Group. Cf.\ (\ref{eqn:prp-jac:mltGJ}).

\item
[$\Gamma$]
A subgroup of $\GJ$. Cf.\ (\ref{eqn:prp-jac:GammaooJ}).

\item[$\GammaOJ(N)$]
A group of the form $\Gamma_0(N)\ltimes \ZZ^2$. Cf.\ (\ref{eqn:prp-jac:GammaooJ}).

\item
[$h^W_\sg$]
The vector-valued function that takes the $h^W_{\sg,r}$ as its components.
See (\ref{eqn:prp-opt:phiWsg}).

\item
[$h^W_{\sg,r}$]
The theta-coefficients of $\phi^W_\sg$.
See (\ref{eqn:prp-opt:hWsgr}).

\item
[$\HGH(D)$]
The Hurwitz class number of discriminant $D$. 
See (\ref{eqn:prp-cln:HHurD}).

\item
[$\HGH_N(D)$]
The generalized Hurwitz class number of level $N$ and discriminant $D$.
See (\ref{eqn:prp-cln:HHurND}-\ref{eqn:prp-cln:HN0}).

\item
[$\HCE_N(D)$]
A coefficient of 
the Cohen--Eisenstein series $\sHCE_N$. 
Cf.\ (\ref{eqn:prp-cln:HCohN}--\ref{eqn:prp-cln:msHCohN}).

\item
[$\sHGH$]
A shorthand for $\sHGH_1$.
Cf.\ (\ref{eqn:prp-cln:msHN}).

\item
[$\sHGH_N$]
A holomorphic mock Jacobi form defined by the $\HGH_N(D)$.
See (\ref{eqn:prp-cln:msHN}).

\item
[$\sHCE_N$]
A holomorphic Jacobi form 
defined by the $\HCE_N(D)$ for $N$ prime.
See (\ref{eqn:prp-cln:msHCohN}).

\item
[$\indo(N)$]
The index of $\Gamma_0(N)$ as a subgroup of $\Gamma_0(1)=\SL_2(\ZZ)$. 
Cf.\ (\ref{eqn:prp-cln:HN0}).

\item
[$J_{k,m}(\Gamma,\rho)$]
The holomorphic Jacobi forms of weight $k$ and index $m$ for $\Gamma$ with character $\rho$. 
See \S~\ref{sec:prp-jac}.

\item
[$\JJ_{k,m}(\Gamma,\rho)$]
The holomorphic mock Jacobi forms of weight $k$ and index $m$ for $\Gamma$ with character $\rho$. 
See \S~\ref{sec:prp-jac}.

\item
[$\JJ^\wh_{k,m}(\Gamma,\rho)$]
The weakly holomorphic mock Jacobi forms of weight $k$ and index $m$ for $\Gamma$ with character $\rho$. 
See \S~\ref{sec:prp-jac}.

\item
[$L_{k,m}(\sG)$]
A certain lattice. 
See (\ref{eqn:prp-opt:LkmsG}).

\item
[$\mc{L}^\opt_{k,m}(\sG)$]
A shorthand for $\mc{L}^\opt_{k,m}(\sG,\ua)$ for the default choice (\ref{eqn:prp-opt:default_alpha}) of $\ua$.

\item
[$\mc{L}^\opt_{k,m}(\sG,\ua)$]
The lattice of $0$-optimal virtual graded modules of weight $k$ and index $m$ for $(\sG,\ua)$. 
Cf.\ Proposition \ref{pro:prp-opt:mcWropmcLrop}.

\item
[$\psi^F$]
The finite part of a meromorphic Jacobi form $\psi$.
See (\ref{eqn:prp-jac:polarfinite}).

\item
[$\psi^P$]
The polar part of a meromorphic Jacobi form $\psi$.
See (\ref{eqn:prp-jac:polarfinite}).

\item
[$\psi^{(1)}$]
A certain meromorphic Jacobi form of weight $2$ and index $1$. 
See (\ref{eqn:prp-jac:psi1}) and (\ref{eqn:mat-um3:psi1}).

\item
[$\psi^{(1)}_\sg$]
A certain meromorphic Jacobi form of weight $2$ and index $1$, generally with level and character, for $\sg\in 2.\sMb$. 
See (\ref{eqn:mat-um3:psi1g}).

\item
[$\psi^{(3)}$]
A certain meromorphic Jacobi form of weight $1$ and index $3$. 
See (\ref{eqn:mat-um3:psi3}).

\item
[$\psi^{(3)}_\sg$]
The meromorphic Jacobi form of weight $1$ and index $3$ attached to $\sg\in 2.\sMb$ by umbral moonshine at $\ell=3$.
See (\ref{eqn:mat-um3:psi1g}).

\item
[$q$]
We set $q:=\ex(\tau)$ for $\tau\in \HH$.

\item[$\mc{Q}_N(D)$]
A set of binary quadratic forms with integer coefficients. 
See \S~\ref{sec:prp-cln}. 

\item
[$R(\sG)$]
The Grothendieck group of finitely generated $\CC\sG$-modules.
Cf.\ (\ref{eqn:prp-opt:LkmsG}).

\item
[$R(\sG)_{(N)}$]
The subgroup of $R(\sG)$ composed of virtual $\sG$-modules $V$ such that $\tr(\sg|V)=0$ unless $o(\sg)=N$. 
Cf.\ (\ref{eqn:mat-opt:ZZvarphi11otimesRsG11}) and (\ref{eqn:mat-opt:ZZvarphi84otimesRsG8plus}).

\item
[$\rho$]
A character of a subgroup $\Gamma$ of $\GJ$ with level. Cf.\ (\ref{eqn:prp-jac:kmrhoaction}).

\item
[$\varrho_m$]
A certain unitary representation of $\mpt(\ZZ)$. 
Cf.\ (\ref{eqn:prp-jac:barvarrhomtheta}).

\item
[$S_{k,m}(\Gamma,\rho)$]
The cuspidal Jacobi forms of weight $k$ and index $m$ for $\Gamma$ with character $\rho$. 
See \S~\ref{sec:prp-jac}.

\item
[$S_{k,m}(\Gamma,\rho)_\ZZ$]
The $\phi\in S_{k,m}(\Gamma,\rho)$ with rational integer Fourier coefficients.
See \S~\ref{sec:prp-jac}.

\item
[$\mpt(\ZZ)$]
The metaplectic double cover of $\SL_2(\ZZ)$. 
Cf.\ (\ref{eqn:prp-jac:barvarrhomtheta}).

\item
[$\theta_m$]
The vector-valued function that takes the $\theta_{m,r}$ 
as its components.
Cf.\ (\ref{eqn:prp-jac:barvarrhomtheta}).

\item
[$\theta_{m,r}$]
The theta series defined by the positive-definite even lattices of rank $1$.
See (\ref{eqn:prp-jac:thetamr}).

\item
[$\vartheta_j(\tau,z)$]
A Jacobi theta function. See (\ref{eqn:prp-jac:vartheta1}) and (\ref{eqn:mat-um3:theta}).

\item
[$\mc{W}^\opt_{k,m}{(\sG)}$]
A shorthand for $\mc{W}^\opt_{k,m}{(\sG,\ua)}$ for the default choice (\ref{eqn:prp-opt:alpha_condition}) of $\ua$.

\item
[$\mc{W}^\opt_{k,m}{(\sG)}_\opc$]
A shorthand for $\mc{W}^\opt_{k,m}{(\sG,\ua)}_\opc$ for the default choice (\ref{eqn:prp-opt:alpha_condition}) of $\ua$.

\item
[$\mc{W}^\opt_{k,m}{(\sG,\ua)}$]
The set of optimal virtual graded modules of weight $k$ and index $m$ for $(\sG,\ua)$. 
Cf.\ (\ref{eqn:prp-opt:mcWoptkmsGua}).

\item
[$\mc{W}^\opt_{k,m}{(\sG,\ua)}_\opc$]
The set of $\opc$-optimal virtual graded modules of weight $k$ and index $m$ for $(\sG,\ua)$. 
Cf.\ (\ref{eqn:prp-opt:mcWoptkmsGua}).

\item
[$\chi_j$]
An irreducible character of $\sMa$. See Table \ref{tab:chars:m11}. 

\item
[$y$]
We set $y:=\ex(z)$ for $z\in\CC$.

\end{list}

\end{footnotesize}

\section{Preparation}\label{sec:prp}

This section is a counterpart to \S~3 in \cite{cncsga}, and offers preparation for the arguments that will appear in \S~\ref{sec:mat}.
We briefly review generalized Hurwitz class numbers in \S~\ref{sec:prp-cln}.
The Jacobi forms we consider here are more general than those that appear in op.\ cit., as we discuss in \S~\ref{sec:prp-jac}. 
The notion of optimality is correspondingly more general, and we explain it in \S~\ref{sec:prp-opt}. 
We work with Jacobi forms of general weight and index in \S\S~\ref{sec:prp-jac}-\ref{sec:prp-opt}, for the sake of future applications. 
We specialize to the situation of weight $2$ and index $1$, being the setting of our main results, in \S~\ref{sec:prp-fcs}.

\subsection{Class Numbers}\label{sec:prp-cln}

As in \cite{cncsga} 
we let $\mc{Q}_N(D)$ denote the set of integer coefficient binary quadratic forms $Q(x,y)=Ax^2+Bxy+Cy^2$ of discriminant $D:=B^2-4AC$ with 
$A\equiv 0 \xmod N$.
We equip each $\mc{Q}_N(D)$ with a right action by the group 
\begin{gather}\label{eqn:prp-cln:Gamma0N}
\Gamma_0(N) :=
\left.\left\{ 
\left(\begin{smallmatrix} a&b\\c&d\end{smallmatrix}\right)\in\SL_2(\ZZ)\;\right|\; c\equiv0\xmod N
\right\}
\end{gather}
in the usual way, 
and we write $\Gamma_0(N)_Q$ for the stabilizer in $\Gamma_0(N)$ of a particular $Q\in \mc{Q}_N(D)$. 

Note that $\mc{Q}_N(D)$ is non-empty if and only if $D$ is a quadratic residue modulo $4N$. 
Say that $D\in\ZZ$ is a {\em discriminant} if $\mc{Q}_1(D)$ is not empty (i.e.\ $D$ is congruent to $0$ or $1$ modulo $4$), and say that $D$ is a {\em fundamental discriminant} if $D$ is the discriminant of the number field $\QQ(\sqrt{D})$. (This is the same as saying that either $D\equiv 1\xmod 4$ and $D$ is square-free, or $D=4d$ where $d$ is square-free and not a quadratic residue modulo $4$.) 

We restrict to negative discriminants in this work. 
For $D$ a negative fundamental discriminant write $h(D)$ for the class number of $\QQ(\sqrt{D})$. 
Then $h(D)$ is just the number of orbits of $\Gamma_0(1)=\SL_2(\ZZ)$ in its action 
on $\mc{Q}_1(D)$.
The {\em generalized Hurwitz class numbers}, denoted $\HGH_N(D)$, are defined for $D<0$ by 
setting
\begin{gather}\label{eqn:prp-cln:HHurND}
\HGH_N(D):=\sum_{Q\in\mc{Q}_N(D)/\Gamma_0(N)}\frac1{\#\Gamma_0(N)_Q}
\end{gather}
when $\mc{Q}_N(D)$ is not empty, and $\HGH_N(D):=0$ otherwise.
Therefore, the $\HGH_N(D)$ are closely related to the usual class numbers $h(D)$. Indeed, the {\em Hurwitz class number}
\begin{gather}\label{eqn:prp-cln:HHurD}
\HGH(D):=\HGH_1(D)
\end{gather} 
coincides with $h(D)$ if $D$ is fundamental and less than $-4$, while for the remaining negative fundamental discriminants we have $\HGH(-3)=\frac13$ and $\HGH(-4)=\frac12$.
For a more general statement we note from Lemma 3.1.2 of \cite{cncsga} that
\begin{gather}\label{eqn:prp-cln:HHurND-Nprime}
	\HGH_N(D)=\left(1+\left(\frac DN\right)\right)\HGH(D)
\end{gather}
for $D$ negative and fundamental when $N$ is prime, where $\left(\frac{\cdot}{\cdot}\right)$ denotes the Kronecker symbol (see e.g.\ p.\ 503 of \cite{MR909238} for the definition). 
In particular $\HGH_N(D) = 2h(D)$ when $D<-4$ and $D$ is a square modulo $4N$.

As in \cite{cncsga} our main motivation for the definition (\ref{eqn:prp-cln:HHurND}) is that it gives rise to holomorphic mock Jacobi forms (cf.\ Proposition \ref{pro:prp-fcs:msHHurNmockmsHCohNmod}). 
With this in mind we define 
\begin{gather}\label{eqn:prp-cln:HN0}
\HGH_N(0):=-\frac{1}{12}
\indo(N)
\end{gather}
where $\indo(N):=[\Gamma_0(1):\Gamma_0(N)]$, 
and set $\HGH_N(D):=0$ when $D>0$. 
We also define $q:=\ex(\tau)$ for $\tau$ in the upper half-plane, $\HH:=\{\tau\in \CC\mid \Im(\tau)>0\}$, and $y:=\ex(z)$ for $z\in \CC$, where $\ex(x):=e^{2\pi i x}$.
The generating functions
\begin{gather}\label{eqn:prp-cln:msHN}
\sHGH_N(\tau,z):=\sum_{n,s\in\ZZ}\HGH_N(s^2-4n)q^ny^s
\end{gather}
and $\sHGH:=\sHGH_1$ will play an important role in what follows.

In \S~\ref{sec:mat-art} we will also employ
Cohen--Eisenstein series, whose coefficients furnish a further variation on the class numbers $h(D)$ (cf.\ \cite{MR0382192}). 
To define these series recall that for $D$ an arbitrary discriminant we have $D=f^2D_0$, where $D_0$ is the discriminant of $\QQ(\sqrt{D})$ and $f$ is the {\em conductor} of the order 
\begin{gather}
\mathcal{O}_D:=\ZZ\left[\frac{D+\sqrt{D}}2\right].
\end{gather} 
Next choose a prime $N$, and for $D=f^2D_0$ as above
let $f'$ be the largest factor of $f$ that is coprime to $N$, and set $D':=(f')^2D_0$. 
Then we may follow \S~1 of \cite{MR894322} in defining $\HCE_N(D)$ for $N$ prime and $D$ a negative discriminant
by setting 
\begin{gather}
\HCE_N(D):=
\begin{cases}\label{eqn:prp-cln:HCohN}
	0&\text{ if $N$ splits in $\mathcal{O}_{D'}$,}\\
	\frac12\HGH(D')&\text{ if $N$ is ramified in $\mathcal{O}_{D'}$, }\\ 
	\HGH(D')&\text{ if $N$ is inert in $\mathcal{O}_{D'}$.}
\end{cases}
\end{gather}
For convenience we also set $\HCE_N(0):=\frac{N-1}{24}$, and set $\HCE_N(D):=0$ when $D$ is positive, or a negative integer that is not a discriminant. 
The {\em Cohen--Eisenstein series} of (prime) level $N$ are now defined by setting
\begin{gather}\label{eqn:prp-cln:msHCohN}
	\sHCE_N(\tau,z):=\sum_{n,s\in\ZZ} \HCE_N(s^2-4n)q^{n}y^s,
\end{gather}
where $q$ and $y$ are as in (\ref{eqn:prp-cln:msHN}).

For $N$ prime the Cohen--Eisenstein series $\sHCE_N$ is related to the generalized Hurwitz class number generating functions $\sHGH$ and $\sHGH_N$ by the formula $\sHGH=\sHCE_N+\frac12\sHGH_N$. This is the content of Lemma 3.1.1 of \cite{cncsga}. Lemma 3.1.2 of op.\ cit.\ also gives us a counterpart 
\begin{gather}\label{eqn:prp-cln:HCohND-Nprime}
	\HCE_N(D)=\frac12\left(1-\left(\frac DN\right)\right)\HGH(D),
\end{gather}
for $D$ negative and fundamental, to (\ref{eqn:prp-cln:HHurND-Nprime}).

\subsection{Jacobi Forms}\label{sec:prp-jac}

Here we explain our conventions on mock Jacobi forms. 
We assume some familiarity with the basic definitions, suggest 
\S~3.1 of \cite{MR4127159} and \cite{MR781735} for more on Jacobi forms, and suggest
\S~3.2 of \cite{MR4127159} and \S~7.2 of \cite{Dabholkar:2012nd} for more on mock Jacobi forms. 
We will also make use of meromorphic Jacobi forms. We refer to \S~2 of \cite{MR3995918} and \S~8 of \cite{Dabholkar:2012nd} for more background on these.

In \S~\ref{sec:mat-um3} we will require 
a 
notion of level for
discrete subgroups of the {Jacobi Group},  
\begin{gather}\label{eqn:prp-jac:GJ}
\GJ=\SL_2(\RR)\ltimes (\RR^2\cdot S^1).
\end{gather}
To explain 
what we mean 
by
this 
we first realize 
$\GJ$ 
as the set of 
triples 
$(\gamma,(\lambda,\mu),\varsigma)$, where $\gamma\in \SL_2(\RR)$, and $(\lambda,\mu)\in \RR^2$, and $\varsigma\in\CC$ satisfies $|\varsigma|=1$.
The multiplication in $\GJ$ is then given by
\begin{gather}\label{eqn:prp-jac:mltGJ}
(\gamma,(\lambda,\mu),\varsigma)(\gamma',(\lambda',\mu'),\varsigma')
=
\left(\gamma\gamma',(\lambda,\mu)\gamma'+(\lambda',\mu'),\varsigma\varsigma'\ex\left(\det\left(\begin{matrix}(\lambda,\mu)\gamma'\\(\lambda',\mu')\end{matrix}\right)\right)\right).
\end{gather}
So in particular, the subgroup $\{(I,(0,0),\varsigma)\}=S^1$ is central. 
We write $\barGJ$ for the quotient of $\GJ$ by this copy of $S^1$. 

Next, 
for $N$ a positive integer, let 
$\GammaOJ(N)$ denote the subgroup of $G^{\rm J}$ composed of the triples 
$(\gamma,(\lambda,\mu),1)$ with $\gamma\in \Gamma_0(N)$ 
and $(\lambda,\mu)\in\ZZ^2$. 
We tacitly identify $\GammaOJ(N)$ with its image in $\barGJ=\SL_2(\RR)\ltimes\RR^2$ under the natural projection when convenient, and let $\GammaooJ$ denote the intersection of the groups $\GammaOJ(N)$, so that
\begin{gather}\label{eqn:prp-jac:GammaooJ}
	\GammaooJ=
	\left\{
	\left.\left(\left(\begin{matrix}1&n\\0&1\end{matrix}\right),(\lambda,\mu),1\right)
	\right| n,\lambda,\mu\in\ZZ
	\right\}.
\end{gather}

Finally we say that $\Gamma<\GJ$ has {\em level $N$} if the image of $\Gamma$ in $\barGJ$ under the natural projection contains $\GammaOJ(N)$ to finite index for some $N$, and if $N$ is the minimal positive integer for which this is true. We also assume that $\Gamma$ contains $\GammaooJ=\ZZ\ltimes\ZZ^2$ (\ref{eqn:prp-jac:GammaooJ}). A subgroup $\Gamma<\GJ$ with {\em level} is a subgroup with level $N$ for some $N$.

We will require a notion of cusp for subgroups of $\GJ$ with level in the above sense. 
To formulate this we 
consider the usual
transitive action of $\SL_2(\RR)$ on the real projective line $\PP^1(\RR)=\RR\cup\{\infty\}$, and
obtain from this an action of $\GJ$ on $\PP^1(\RR)$ by letting the normal subgroup $\{(I,(\lambda,\mu),\varsigma)\}=\RR^2\cdot S^1$ 
(cf.\ (\ref{eqn:prp-jac:mltGJ}))
act trivially. 
Thus an arbitrary subgroup $\Gamma<\GJ$ acts naturally on $\PP^1(\RR)$ by restriction. 
If $\Gamma=\GammaOJ(N)$ for some $N$ then this action preserves the subset $\PP^1(\QQ)=\QQ\cup\{\infty\}$ of rational points,  
and by our definitions, this statement holds true also for $\Gamma<\GJ$ a subgroup with level (cf.\ e.g.\ Proposition 1.10.2 of \cite{Dun_ArthGrpsAffE8Dyn}).
So given $\Gamma<\GJ$ with level 
we may consider the set 
\begin{gather}\label{eqn:prp-jac:cuspsofGamma}
	\Gamma\backslash\PP^1(\QQ) = \{\Gamma\cdot\a\mid \a\in\QQ\cup\{\infty\}\}
\end{gather}
of
orbits of $\Gamma$ on $\PP^1(\QQ)$. We call these orbits (\ref{eqn:prp-jac:cuspsofGamma}) the {\em cusps} of $\Gamma$, and we refer to the orbit $\Gamma\cdot\infty$ containing $\infty$ as the {\em infinite cusp} of $\Gamma$.

For $\Gamma$ a subgroup of $\GJ$ with level, 
a {\em character} of $\Gamma$ will mean a morphism of groups $\rho:\Gamma\to \CC^*$ that is trivial when restricted to $\GammaooJ$ (\ref{eqn:prp-jac:GammaooJ}).
For such $\Gamma$ and $\rho$, and for integers $k$ and $m$, we now define the {\em weight} $k$, {\em index} $m$ and {\em character} $\rho$ action of $\Gamma$ on a smooth function $\phi:\HH\times \CC\to\CC$ by setting
\begin{gather}\label{eqn:prp-jac:kmrhoaction}
	(\phi|_{k,m,\rho}(\gamma,(\lambda,\mu),\varsigma))(\tau,z)
	:=
	(\phi|_{k,m}(\gamma,(\lambda,\mu),\varsigma))(\tau,z)
	\rho(\gamma,(\lambda,\mu),\varsigma)
\end{gather}
for $(\gamma,(\lambda,\mu),\varsigma)\in \Gamma$, where $\phi\mapsto \phi|_{k,m}(\gamma,(\lambda,\mu),\varsigma)$ denotes the usual weight $k$ and index $m$ action of $\GJ$ (as in e.g.\ Theorem 1.4 in \cite{MR781735}).

To formulate the notion of Jacobi form for a group $\Gamma<\GJ$ as above say that a smooth function $\phi:\HH\times \CC\to \CC$ is {\em elliptic} of index $m$ for $\Gamma$ with character $\rho$ if it is invariant for the restriction of the action (\ref{eqn:prp-jac:kmrhoaction}) to the intersection of $\Gamma$ with the normal subgroup $\RR^2\cdot S^1$ of $\GJ$. 
This restricted action is 
given explicitly by
\begin{gather}
	(\phi|_{k,m,\rho}(I,(\lambda,\mu),\varsigma))(\tau,z)
	:=
	\ex\left(m(\lambda^2\tau+2\lambda z+\lambda\mu+\varsigma)\right)
	\phi(\tau,z+\lambda\tau+\mu)
	\rho(I,(\lambda,\mu),\varsigma).	
\end{gather}
In particular, it is independent of $k$.

Given our 
assumptions on $\rho$, an elliptic function of index $m$ for $\Gamma$ with character $\rho$
admits a {\em theta-decomposition}
\begin{gather}\label{eqn:prp-jac:thetadecomp}
	\phi(\tau,z)
	= \sum_{r\xmod 2m} h_r(\tau)\theta_{m,r}(\tau,z),
\end{gather}
where the {\em theta-coefficients} $h_r$ are smooth functions on $\HH$, and
where the theta series $\theta_{m,r}$ 
are defined for integers $m$ and $r$, with $m$ positive, by setting
\begin{gather}\label{eqn:prp-jac:thetamr}
	\theta_{m,r}(\tau,z):=\sum_{s\equiv r\xmod 2m}q^{\frac{s^2}{4m}}y^s. 
\end{gather}

Suppose now that $\phi$ is a holomorphic function on $\HH\times \CC$ that is elliptic of index $m$ for $\Gamma$ with character $\rho$. 
In this work we say that $\phi$ is a {\em mock Jacobi form} of weight $k$ and index $m$ for $\Gamma$ with character $\rho$ if there exist holomorphic functions $g_r:\HH\to \CC$, for $r\xmod 2m$, such that if we set 
\begin{gather}\label{eqn:prp-jac:hathr}
	\widehat h_r(\tau):=h_r(\tau) + g_r^*(\tau),
\end{gather}
where $g_r^*$ is the weight $k-\frac12$ Eichler integral of $g_r$ (see e.g.\ (7.2) of \cite{Dabholkar:2012nd}),
then the 
real-analytic function $\widehat \phi$, given by
\begin{gather}\label{eqn:prp-jac:hatvarphi}
	\widehat\phi(\tau,z):=\sum_{r\xmod 2m}\widehat h_r(\tau)\theta_{m,r}(\tau,z),
\end{gather}
is invariant for the weight $k$, index $m$ and character $\rho$ action (\ref{eqn:prp-jac:kmrhoaction}) of $\Gamma$. 
For completeness we note that if 
\begin{gather}
g_r(\tau)=\sum_{D\geq 0}B(D,r)q^{\frac{D}{4m}}
\end{gather}
is the Fourier expansion of $g_r$, then $g_r^*$ in (\ref{eqn:prp-jac:hathr}) may be expressed explicitly as
\begin{gather}
	g_r^*(\tau)=\overline{B(0,r)}
	\frac{(4\pi \Im(\tau))^{-k+\frac32}}{k-\frac32}
	+\sum_{D>0}\left(\frac{D}{4m}\right)^{k-\frac32}
	\overline{B(D,r)}\Gamma(\tfrac32-k,4\pi\tfrac{D}{4m}\Im(\tau))q^{-\frac{D}{4m}},
\end{gather}
where $\Gamma(\frac32-k,x):=\int_x^\infty t^{-k+\frac12}e^{-t}{\rm d}t$ is the incomplete gamma function.

For $\phi$ a mock Jacobi form, with theta-coefficients $h_r$ as in (\ref{eqn:prp-jac:thetadecomp}), 
we call 
$\widehat h_r$ as in (\ref{eqn:prp-jac:hathr}) the {\em completion} of $h_r$,
and call 
$\widehat \phi$ as in (\ref{eqn:prp-jac:hatvarphi}) the {\em completion} of $\phi$. 
Under our hypotheses on $\rho$ the theta-coefficients $h_r$ of a mock Jacobi form for $\Gamma$ with character $\rho$
admit {\em Fourier series} expansions of the form
\begin{gather}\label{eqn:prp-jac:hr}
	h_r(\tau)=\sum_{D\equiv r^2\xmod 4m} C_\phi(D,r)q^{-\frac{D}{4m}}.
\end{gather}

We write the theta-decomposition (\ref{eqn:prp-jac:thetadecomp}) compactly as 
$\phi(\tau,z) = h(\tau)^{\rm t}\theta_m(\tau,z)$
or $\phi=h^{\rm t}\theta_m$
when convenient, taking $h=(h_r)$ to be the vector-valued function with the theta-coefficients $h_r$ (\ref{eqn:prp-jac:hr}) as its components, 
taking $\theta_m=(\theta_{m,r})$ to be the vector-valued function whose components are the theta series $\theta_{m,r}$ (\ref{eqn:prp-jac:thetamr}),
and using the superscript in $h(\tau)^{\rm t}$ and $h^{\rm t}$ to denote matrix transposition.

We write  $\mpt(\ZZ)$ for the metaplectic double cover of the modular group, and realize it as the set of pairs $(\gamma,\upsilon)$, where $\gamma\in \SL_2(\ZZ)$ and $\upsilon:\HH\to\CC$ is either of the two holomorphic functions such that $\upsilon(\tau)^2=c\tau+d$ when $(c,d)$ is the lower row of $\gamma$. 
We also recall the {\em index $m$} Weil representation $\varrho_m:\mpt(\ZZ)\to \GL_{2m}(\CC)$, defined by requiring that
\begin{gather}\label{eqn:prp-jac:barvarrhomtheta}
	\overline{\varrho_m(\gamma,\upsilon)}
	\th_m\left(\frac{a\tau+b}{c\tau+d},\frac{z}{c\tau+d}\right)\frac1{\upsilon(\tau)}\ex\left(-\frac{cmz^2}{c\tau+d}\right)
	=\theta_m(\tau,z)
\end{gather}
when $\gamma=\left(\begin{smallmatrix}a&b\\c&d\end{smallmatrix}\right)$, for $(\gamma,\upsilon) \in \mpt(\ZZ)$, 
and use this to define {weakly holomorphic}, 
{holomorphic}, and {cuspidal} mock Jacobi forms as follows. 
For $\phi=h^{\rm t}\theta_m$ a mock Jacobi form of weight $k$ and index $m$ for $\Gamma$ with character $\rho$ we say that $\phi$ is {\em weakly holomorphic} if there exists a constant $C>0$ such that 
\begin{gather}\label{eqn:prp-jac:hath_exp}
	{\varrho_m(\gamma,\upsilon)} (\widehat h |_{k-\frac12} (\gamma,\upsilon))(\tau) = O(e^{C\Im(\tau)})
\end{gather}
as $\Im(\tau)\to \infty$, for all $(\gamma,\upsilon)\in \mpt(\ZZ)$, 
where $\widehat h = (\widehat h_r)$, and
where $\widehat h\mapsto \widehat h|_{k-\frac12}(\gamma,\upsilon)$ denotes the usual weight $k-\frac12$ action of the metaplectic group on vector-valued functions on $\HH$ (see e.g.\ \S~3.2 of \cite{cncsga}).
We say that $\phi$ is {\em holomorphic} if the exponential bound (\ref{eqn:prp-jac:hath_exp}) can be replaced with boundedness, 
\begin{gather}\label{eqn:prp-jac:hath_bounded}
	{\varrho_m(\gamma,\upsilon)} (\widehat h |_{k-\frac12} (\gamma,\upsilon))(\tau) = O(1)
\end{gather}
as $\Im(\tau)\to \infty$, for all $(\gamma,\upsilon)\in \mpt(\ZZ)$, and say that $\phi$ is {\em cuspidal} if boundedness (\ref{eqn:prp-jac:hath_bounded}) can in fact be replaced with vanishing,
\begin{gather}\label{eqn:prp-jac:hath_vanishing}
	{\varrho_m(\gamma,\upsilon)} (\widehat h |_{k-\frac12} (\gamma,\upsilon))(\tau) \to 0
\end{gather}
as $\Im(\tau)\to \infty$, for all $(\gamma,\upsilon)\in \mpt(\ZZ)$. 
Note that a cuspidal mock Jacobi form necessarily has vanishing shadow, so there are no cuspidal mock Jacobi forms that are not actually Jacobi forms. This can be seen by using the pairing introduced by Bruinier--Funke in Proposition 3.5 of \cite{MR2097357}. (See Proposition 3.2.1 of \cite{MR4127159} for a formulation of the Bruinier--Funke pairing in terms of Jacobi forms.)

With the above definitions in place we let
$\JJ^\wh_{k,m}(\Gamma,\rho)$ 
denote the space of weakly holomorphic (\ref{eqn:prp-jac:hath_exp}) mock Jacobi forms of weight $k$ and index 
$m$ 
for $\Gamma$ with character $\rho$, and write $\JJ_{k,m}(\Gamma,\rho)$  for the subspace of mock Jacobi forms that are holomorphic (\ref{eqn:prp-jac:hath_bounded}). 
The {\em holomorphic Jacobi forms} of weight $k$ and index $m$ for $\Gamma$ with character $\rho$ are the forms $\phi=h^{\rm t}\theta_m$ in $\JJ_{k,m}(\Gamma,\rho)$ for which the $g^*_r$ in (\ref{eqn:prp-jac:hathr}) all vanish, so that $\widehat h=h$, \&c. 
We denote the space they comprise by $J_{k,m}(\Gamma,\rho)$, and write $S_{k,m}(\Gamma,\rho)$ for the subspace comprised of 
Jacobi forms that are cuspidal.
We suppress the character $\rho$ from these notations when it is trivial. 

In the remainder of this section we offer
a quick review of
some of 
the 
meromorphic Jacobi 
form theory that
appears in
\S~8 of \cite{Dabholkar:2012nd}. 
Briefly, a meromorphic Jacobi form of weight $k$ and index $m$ 
for $\Gamma$ with character $\rho$ 
is a function that 
is invariant under the corresponding action (\ref{eqn:prp-jac:kmrhoaction}) of $\Gamma$,  
but is allowed to have poles in the $z$ variable.
A classic example is the Weierstrass elliptic function,
\begin{gather}\label{eqn:prp-jac:wp}
	\wp(\tau,z) := \frac{1}{z^2}+ 
	\sum_{\substack{(a,b)\in\ZZ^2\\(a,b)\neq (0,0)}} 
	\frac{1}{(a\tau+b-z)^2}-\frac{1}{(a\tau+b)^2}
	,
\end{gather}
which has weight $2$ and index $0$ for $\GammaOJ(1)=\SL_2(\ZZ)\ltimes \ZZ^2$ (with trivial character), and a double pole at $z=a\tau+b$, for all $a,b\in \ZZ$.
An example of particular relevance in this work is
\begin{gather}\label{eqn:prp-jac:psi1}
	\psi^{(1)}(\tau,z):=\frac{1}{12}\left(\frac{9}{4\pi^2}\wp(\tau,z)^2-E_4(\tau)\right)\frac{\vartheta_1(\tau,z)^2}{\eta(\tau)^6},
\end{gather}
where $E_4$ is the weight $4$ Eisenstein series for $\SL_2(\ZZ)$, normalized so that $E_4(\tau)=1+O(q)$ as $\Im(\tau)\to \infty$, and where the {\em Jacobi theta function}  and {\em Dedekind eta function} are defined by setting
\begin{gather}
	\vartheta_1(\tau,z):=-\theta_{1,1}(\tfrac12\tau,\tfrac12(z+\tfrac12)),\label{eqn:prp-jac:vartheta1}
	\\
	\eta(\tau):=q^{\frac{1}{24}}\prod_{n>0}(1-q^n),\label{eqn:prp-jac:eta}
\end{gather}
respectively, for $\tau\in \HH$ and $z\in \CC$, where $\theta_{1,1}$ is as in (\ref{eqn:prp-jac:thetamr}).
 Similar to $\wp$, 
 the function $\psi^{(1)}$ 
 has a double pole at every lattice point $z\in \ZZ\tau+\ZZ$, but in contrast to $\wp$ it transforms with weight $2$ and index $1$ under the action of $\GammaOJ(1)$, 
 rather than weight $2$ and index $0$.

Observe now that, because of its poles, 
a meromorphic Jacobi form generally does not admit a theta-decomposition as in (\ref{eqn:prp-jac:thetadecomp}).
A solution to this problem is proposed in \cite{Dabholkar:2012nd},
following earlier work \cite{Zwegers_thesis} of Zwegers.
Under suitable conditions on the meromorphic Jacobi form $\psi$, the authors of \cite{Dabholkar:2012nd} introduce a canonical 
decomposition 
\begin{gather}\label{eqn:prp-jac:polarfinite}
\psi=\psi^P+\psi^F,
\end{gather} 
where $\psi^P$ and $\psi^F$ are called the {\em polar} and {\em finite} parts of $\psi$, respectively. 
Here 
the 
polar part $\psi^P$ 
may be realized as an average over the local behavior of $\psi(\tau,z)$ as $z$ approaches its poles (within a fundamental domain for $\ZZ\tau+\ZZ$), 
and 
the finite part $\psi^F$ is defined by setting
\begin{gather}\label{eqn:prp-jac:psiF}
	\psi^F(\tau,z) = \sum_{r\xmod 2m} h_r(\tau)\theta_{m,r}(\tau,z)
\end{gather}
(cf.\ (\ref{eqn:prp-jac:thetadecomp})), where the $h_r(\tau)$ are computed by imposing certain choices on the (otherwise ambiguous) integral expression
\begin{gather}\label{eqn:prp-jac:hrint}
h_r(\tau)q^{\frac{r^2}{4m}}=\int_{z_0}^{z_0+1}\psi(\tau,z)\ex(-rz){\rm d}z.
\end{gather}
In particular, the finite part of a meromorphic Jacobi form admits a theta-decomposition (\ref{eqn:prp-jac:psiF}) by construction.

Note that (\ref{eqn:prp-jac:hrint}) unambiguously recovers the theta-coefficients of $\psi$, for any choice of $z_0\in\CC$, and any path from $z_0$ to $z_0+1$, if $\psi$ is holomorphic in $z$ and elliptic of index $m$ (cf.\ (\ref{eqn:prp-jac:thetadecomp})). We refer to \S~8 of \cite{Dabholkar:2012nd} for details on how to interpret (\ref{eqn:prp-jac:hrint}) in the meromorphic case.

We conclude by detailing the decomposition (\ref{eqn:prp-jac:polarfinite}) in the special case that $\psi=\psi^{(1)}$ is as in (\ref{eqn:prp-jac:psi1}). 
For this we follow \cite{Dabholkar:2012nd} in defining the {\em averaging operator} of index $m$, by setting
\begin{gather}\label{eqn:prp-jac:Avm}
\Av^{(m)}(F(y)):=\sum_{s\in\ZZ}y^{2ms}q^{ms^2}F(yq^s)
\end{gather} 
for $m$ a non-negative integer and $F(y)$ a function of polynomial growth. 
We are then able to identify the polar part $\psi^{(1),P}$ of $\psi^{(1)}$ 
as
\begin{gather}\label{eqn:prp-jac:psi1P}
\psi^{(1),P}(\tau,z)=-12\Av^{(1)}({y}{(1-y)^{-2}}),
\end{gather}
and from the discussion of Example 5 in \S~8.5 of \cite{Dabholkar:2012nd} we obtain that the finite part $\psi^{(1),F}$ satisfies
\begin{gather}\label{eqn:prp-jac:psi1F}
	\psi^{(1),F}(\tau,z)
	=24\sHGH(\tau,z),
\end{gather}
where $\sHGH=\sHGH_1$ is the Hurwitz class number generating function, as defined in (\ref{eqn:prp-cln:msHN}).

\subsection{Optimality}
\label{sec:prp-opt}

In this section we explain optimality for holomorphic mock Jacobi forms of arbitrary integer weight and positive integer index. The exposition is similar to that of \S~3.3 of \cite{cncsga} except that here we allow for forms that are automorphic for more general subgroups of the Jacobi group, as described in \S~3.2, and also consider characters.

To begin 
we 
fix an integer $k$ and a positive integer $m$. 
Then for $\Gamma$ and $\rho$ as in (\ref{eqn:prp-jac:kmrhoaction})
we say that
$\phi\in\JJ_{k,m}(\Gamma,\rho)$ 
is {\em optimal} if 
\begin{gather}\label{eqn:prp-opt:phi_optimal}
	\left({\varrho_m(\gamma,\upsilon)} \widehat h |_{k-\frac12} (\gamma,\upsilon)\right)(\tau) \to 0
\end{gather}
as $\Im(\tau)\to \infty$ (cf.\ (\ref{eqn:prp-jac:hath_vanishing})), whenever $(\gamma,\upsilon)\in\mpt(\ZZ)$ 
is such that $\gamma\cdot\infty$ does not belong to the infinite cusp of $\Gamma$ (cf.\ (\ref{eqn:prp-jac:cuspsofGamma})). 
We then refine this notion by saying that
$\phi\in\JJ_{k,m}(\Gamma,\rho)$ 
is {\em $\opc$-optimal}, for a given constant $\opc$, if $\phi$ is optimal (\ref{eqn:prp-opt:phi_optimal}) and satisfies
\begin{gather}\label{eqn:prp-opt:phiequalsminusc}
	\phi(\tau,z)=-\opc+O(q),
\end{gather}
for every $z\in\CC$, as $\Im(\tau)\to\infty$. 

As in \cite{cncsga}
we are 
interested in situations whereby collections of optimal mock Jacobi forms are organized by finite groups. 
To 
setup for this we 
let $\sG$ be a finite group, and consider an assignment
\begin{gather}\label{eqn:prp-opt:alpha}
\ua:[\sg]\mapsto \ua(\sg)=(\Gamma_\sg,\rho_\sg)
\end{gather}
of pairs 
to the conjugacy classes $[\sg]$ 
of 
$\sG$, where for each $\sg\in \sG$ the first component $\Gamma_\sg$ of $\ua(\sg)$ is a subgroup of $\GJ$ with level $N=o(\sg)$ in the sense of \S\ref{sec:prp-jac}, 
and the second 
component 
$\rho_\sg$ 
is 
a character of $\Gamma_\sg$ as in (\ref{eqn:prp-jac:kmrhoaction}).
Also, given a 
virtual graded $\sG$-module $W$ 
of the form
\begin{gather}\label{eqn:prp-opt:W}
W=\bigoplus_{r\xmod 2m}\bigoplus_{D\equiv r^2\xmod 4m}W_{r,\frac{D}{4m}}
\end{gather} 
we define 
the associated {\em McKay--Thompson series} $\phi^W_\sg$, for each $\sg\in \sG$,  
by 
requiring that 
\begin{gather}\label{eqn:prp-opt:phiWsg}
\phi^W_\sg(\tau,z)=\sum_{r\xmod 2m}h^W_{\sg,r}(\tau)\theta_{m,r}(\tau,z)
\end{gather} 
(cf.\ (\ref{eqn:prp-jac:hatvarphi})), 
where  
$h^W_{\sg,r}$ is
defined, 
for each integer $r$ modulo $2m$,
by 
setting
\begin{gather}\label{eqn:prp-opt:hWsgr}
	h^W_{\sg,r}(\tau):=\sum_{D\equiv r^2\xmod 4m} 
	\tr\left(g|W_{r,\frac{D}{4m}}\right)q^{-\frac{D}{4m}}.
\end{gather}
As in \cite{cncsga} we use the term virtual $\sG$-module to refer to an integer combination of irreducible ordinary characters of $\sG$, and use the term virtual graded $\sG$-module to refer to an indexed collection of such things. See \S~3.3 of op.\ cit.\ for more detail.

With this setup in place 
we say that $W$ as in (\ref{eqn:prp-opt:W}) is {\em $\opc$-optimal (mock Jacobi)} of weight $k$ and index $m$ for $(\sG,\ua)$ if 
\begin{gather}\label{eqn:prp-opt:alpha_condition}
\phi^W_\sg \in
\JJ_{k,m}(\Gamma_\sg,\rho_\sg),
\end{gather}
and if $\phi^W_\sg$ also satisfies the $\opc$-optimality conditions 
(\ref{eqn:prp-opt:phi_optimal}-\ref{eqn:prp-opt:phiequalsminusc}),
for every $\sg\in\sG$, 
and we say that
$W$ as in (\ref{eqn:prp-opt:W}) {\em optimal} for $(\sG,\ua)$ if it is $\opc$-optimal for some $\opc$.

We are interested in classifying the 
optimal 
virtual graded modules of a given weight and index for a pair $(\sG,\ua)$, where $\sG$ is a finite group and $\ua$ is as in (\ref{eqn:prp-opt:alpha}).
With this in mind we let
$\mc{W}^\opt_{k,m}(\sG,\ua)$ denote the set of optimal virtual graded modules of weight $k$ and index $m$ for $(\sG,\ua)$, and 
for each $\opc\in \ZZ$ let $\mc{W}^\opt_{k,m}(\sG,\ua)_\opc$ denote the subset of modules that are $\opc$-optimal.
As in \cite{cncsga} we note that $\mc{W}^\opt_{k,m}(\sG,\ua)$ and $\mc{W}^\opt_{k,m}(\sG,\ua)_0$ are naturally free abelian groups (cf.\ (3.3.11) of op.\ cit.), and 
\begin{gather}\label{eqn:prp-opt:mcWoptkmsGua}
	\mc{W}^\opt_{k,m}(\sG,\ua)=\sum_{\opc\in\ZZ}\mc{W}^\opt_{k,m}(\sG,\ua)_\opc
\end{gather}
is a decomposition of the former into modules for the latter. We also write $\mc{L}^\opt_{k,m}(\sG,\ua)$ for the lattice structure on $\mc{W}^\opt_{k,m}(\sG,\ua)_0$ that we obtain by 
embedding it in 
\begin{gather}\label{eqn:prp-opt:LkmsG}
	L_{k,m}(\sG) := S_{k,m}(\#\sG)_\ZZ\otimes_\ZZ R(\sG)
\end{gather}
in the natural way. 
(Here $R(\sG)$ denotes the Grothendieck group of the category of finitely generated $\sG$-modules, which we identify with the free $\ZZ$-module generated by the irreducible ordinary characters of $\sG$. We refer to (3.3.12-3.3.14) of \cite{cncsga} for the lattice structure on $L_{k,m}(\sG)$, 
and refer to (3.3.15-3.3.19) of op.\ cit.\ for the map that identifies $\mc{W}^\opt_{k,m}(\sG,\ua)_0$ as a sublattice.)

Define 
$\opc^\opt_{k,m}(\sG,\ua)$ to be the minimal positive integer for which a $\opc$-optimal virtual graded module of weight $k$ and index $m$ for $(\sG,\ua)$ exists, 
\begin{gather}\label{eqn:prp-opt:opcopt}
	\opc^\opt_{k,m}(\sG,\ua) := \min\left\{\opc\in \ZZ^+\mid \mc{W}^\opt_{k,m}(\sG,\ua)_\opc\neq \emptyset\right\}.
\end{gather}
Then we have the following counterpart to Proposition 3.3.1 of \cite{cncsga}, by exactly the same argument.
\begin{pro}\label{pro:prp-opt:mcWropmcLrop}
The sets $\mc{W}^\opt_{k,m}(\sG,\ua)$ and $\mc{L}^\opt_{k,m}(\sG,\ua)$ are naturally free abelian groups of finite rank. 
If $\opc\equiv 0 \xmod \opc^\opt_{k,m}(\sG,\ua)$ then $\mc{W}^\opt_{k,m}(\sG,\ua)_\opc$ is naturally a $\mc{L}^\opt_{k,m}(\sG,\ua)$-torsor. 
If $\opc\not\equiv 0 \xmod \opc^\opt_{k,m}(\sG,\ua)$ then $\mc{W}^\opt_{k,m}(\sG,\ua)_\opc$ is empty.
\end{pro}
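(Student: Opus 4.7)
My plan is to transcribe the argument for Proposition 3.3.1 of \cite{cncsga} into the present notation, with a small amount of extra care to accommodate the more general $\ua$-data. The linchpin is that the optimality conditions (\ref{eqn:prp-opt:phi_optimal}) and (\ref{eqn:prp-opt:phiequalsminusc}) on a McKay--Thompson series $\phi^W_\sg$ decompose into \emph{linear} conditions---membership in $\JJ_{k,m}(\Gamma_\sg,\rho_\sg)$ together with vanishing of the completed theta-coefficients (\ref{eqn:prp-jac:hath_vanishing}) at every cusp outside the infinite one---plus the single \emph{affine} requirement that the common constant term of $\phi^W_\sg$ equal $-\opc$. Since the Fourier coefficients of $\phi^W_\sg$ are additive in $W$ by (\ref{eqn:prp-opt:hWsgr}), the resulting constant-term assignment $\kappa \colon W \mapsto \opc$ is a group homomorphism $\mc{W}^\opt_{k,m}(\sG,\ua) \to \ZZ$ with $\ker\kappa = \mc{L}^\opt_{k,m}(\sG,\ua)$ and fibres $\kappa^{-1}(\opc) = \mc{W}^\opt_{k,m}(\sG,\ua)_\opc$.

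To show $\mc{L}^\opt_{k,m}(\sG,\ua)$ is free of finite rank I would exhibit its embedding into the ambient lattice $L_{k,m}(\sG) = S_{k,m}(\#\sG)_\ZZ \otimes_\ZZ R(\sG)$ of (\ref{eqn:prp-opt:LkmsG}), following (3.3.15--3.3.19) of \cite{cncsga}. The essential input is that every $0$-optimal mock Jacobi form for $(\Gamma_\sg,\rho_\sg)$ is in fact a cuspidal Jacobi form: conditions (\ref{eqn:prp-opt:phi_optimal}) and (\ref{eqn:prp-opt:phiequalsminusc}) at $\opc=0$ together give the vanishing condition (\ref{eqn:prp-jac:hath_vanishing}) at every cusp, and the excerpt remarks that cuspidal mock Jacobi forms necessarily have vanishing shadow. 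Viewing the assignment $\sg \mapsto \phi^W_\sg$ as a class function on $\sG$ valued in cuspidal Jacobi forms for $\GammaOJ(\#\sG)$ (obtained from $\Gamma_\sg$ by restriction, noting that any $\rho_\sg$ becomes trivial after passing to a further congruence subgroup) and decomposing it along $\Irr(\sG)$ produces the desired embedding. Finiteness of rank follows because $S_{k,m}(\#\sG)$ is finite-dimensional and $R(\sG)$ is free of rank $\#\Irr(\sG)$, and torsion-freeness is automatic as $\mc{L}^\opt_{k,m}(\sG,\ua)$ is a subgroup of a free abelian group.

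The remaining claims are then formal consequences. The image of $\kappa$ is a subgroup of $\ZZ$, hence of the form $n\ZZ$; by the definition (\ref{eqn:prp-opt:opcopt}) this $n$ equals $\opc^\opt_{k,m}(\sG,\ua)$. The fibre $\kappa^{-1}(\opc)$ is thus empty unless $\opc \in \opc^\opt_{k,m}(\sG,\ua)\ZZ$, and otherwise it is a coset of $\ker\kappa = \mc{L}^\opt_{k,m}(\sG,\ua)$, yielding the torsor structure. Finally, the short exact sequence
\[
0 \to \mc{L}^\opt_{k,m}(\sG,\ua) \to \mc{W}^\opt_{k,m}(\sG,\ua) \to \opc^\opt_{k,m}(\sG,\ua)\ZZ \to 0
\]
splits because the quotient is free, so $\mc{W}^\opt_{k,m}(\sG,\ua)$ is itself free of rank one greater than that of $\mc{L}^\opt_{k,m}(\sG,\ua)$. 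I anticipate no substantive obstacle: the argument is formal once the ambient embedding is established, and the only point requiring genuine attention is verifying that the enlargement of $\Gamma_\sg$ beyond $\GammaOJ(o(\sg))$ and the presence of non-trivial $\rho_\sg$ do not disrupt that embedding, which is immediate from the fact that restriction of a cuspidal Jacobi form to a finite-index subgroup of $\GJ$ remains a cuspidal Jacobi form.
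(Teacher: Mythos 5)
Your proposal is correct and follows exactly the route the paper intends: the paper proves Proposition \ref{pro:prp-opt:mcWropmcLrop} simply by invoking the argument of Proposition 3.3.1 of \cite{cncsga} verbatim, which is precisely the constant-term homomorphism $\kappa$ with kernel $\mc{L}^\opt_{k,m}(\sG,\ua)$ and image $\opc^\opt_{k,m}(\sG,\ua)\ZZ$ that you describe, together with the embedding of the $0$-optimal modules into $L_{k,m}(\sG)$ via the observation that $0$-optimal forms are cuspidal Jacobi forms. Your added care about the enlarged groups $\Gamma_\sg$ and the characters $\rho_\sg$ is exactly the (routine) adaptation the paper has in mind when it says ``by exactly the same argument.''
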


Motivated by 
Proposition \ref{pro:prp-opt:mcWropmcLrop} 
we refer to the determination of $\opc^\opt_{k,m}(\sG,\ua)$ and $\mc{L}^\opt_{k,m}(\sG,\ua)$, for a fixed choice of $\sG$ and $\ua$, as the {\em classification problem} for optimal virtual graded modules of weight $k$ and index $m$ for $(\sG,\ua)$.

To conclude this section we define the {\em default} choice for $\ua$ as in (\ref{eqn:prp-opt:alpha}) to be the assignment $\ua(\sg)=(\Gamma_\sg,\rho_\sg)$ for which 
\begin{gather}\label{eqn:prp-opt:default_alpha}
\Gamma_\sg=\GammaOJ(o(\sg))
\end{gather}
and $\rho_\sg$ is trivial, for each $\sg\in \sG$. 
We indicate this default choice of $\ua$ by suppressing it from notation, writing $\mc{W}^\opt_{k,m}(\sG)$ for $\mc{W}^\opt_{k,m}(\sG,\ua)$, 
and writing $\mc{L}^\opt_{k,m}(\sG)$ for $\mc{L}^\opt_{k,m}(\sG,\ua)$, \&c., when $\ua$ is as in (\ref{eqn:prp-opt:default_alpha}).

\subsection{Our Main Focus}\label{sec:prp-fcs}

Having discussed mock Jacobi forms with arbitrary integer weight and positive integer index in \S\S~\ref{sec:prp-jac}-\ref{sec:prp-opt}, we 
now review some of the special features that arise 
when the weight is $2$ and the index is $1$, since this is the situation upon which we mostly focus in the remainder of this work.

To begin we note that for a mock Jacobi form $\phi$ with index $m=1$ we may unambiguously write $C_\phi(D)$ for the Fourier coefficient $C_\phi(D,r)$ (cf.\ (\ref{eqn:prp-jac:hr})), because the latter depends only on $D$ and the parity of $r$ (cf.\ (\ref{eqn:prp-jac:thetadecomp})), and the relation $D\equiv r^2\xmod 4$ forces the parity of $D$ and $r$ to match.
With this convention we have a Fourier expansion
\begin{gather}\label{eqn:prp-fcs:phi}
	\phi(\tau,z)=\sum_{n,s\in\ZZ}C_\phi(s^2-4n)q^ny^s
\end{gather} 
when $\phi$ has index $1$ (cf.\ (\ref{eqn:prp-cln:msHN}), (\ref{eqn:prp-cln:msHCohN})). 
(More generally we may drop the $r$ from $C_\phi(D,r)$ whenever the index $m$ of $\phi$ is not composite, for in this case $r^2\equiv s^2\xmod 4m$ implies $r\equiv \pm s\xmod 2m$.)

By a similar token, for
$W$ as in (\ref{eqn:prp-opt:W}) with $m=1$, 
there is no loss in considering $\check{W}=\bigoplus_D \check{W}_D$ in place of $W$,  
where $\check{W}_D:=W_{D,\frac{D}{4}}$. 
As in \cite{cncsga} we adopt this simplification, and also drop the accents from $\check{W}$ and $\check{W}_D$ henceforth.
Thus, from now on
we write simply
\begin{gather}\label{eqn:prp-fcs:W}
	W=\bigoplus_D W_D
\end{gather}
for the grading of a virtual graded $\sG$-module $W$ as in (\ref{eqn:prp-opt:W}) when $m=1$, where each $W_D$ in (\ref{eqn:prp-fcs:W}) is taken to be $W_{D,\frac{D}{4}}$ in (\ref{eqn:prp-opt:W}).
With this convention the 
McKay--Thompson series $\phi^W_\sg$ 
associated to $W$
(see (\ref{eqn:prp-opt:phiWsg}-\ref{eqn:prp-opt:hWsgr})) may be defined by setting
\begin{gather}\label{eqn:prp-fcs:phiWsg}
	\phi^W_\sg(\tau,z):=\sum_{n,s\in\ZZ}\tr(\sg|W_{s^2-4n})q^ny^s
\end{gather}
(cf.\ (\ref{eqn:prp-fcs:phi})).

The main significance for us of the weight $k=2$ is that it plays host to the generalized Hurwitz class number generating functions $\sHGH_N$ of (\ref{eqn:prp-cln:msHN}), and the Cohen--Eisenstein series $\sHCE_N$ of (\ref{eqn:prp-cln:msHCohN}). 
We put this precisely as follows.
\begin{pro}\label{pro:prp-fcs:msHHurNmockmsHCohNmod}
For $N$ a positive integer the function $\sHGH_N$ 
belongs to $\JJ_{2,1}(\GammaOJ(N))$. 
For $N$ prime the function $\sHCE_N$ belongs to $J_{2,1}(\GammaOJ(N))$.
\end{pro}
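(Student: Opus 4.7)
The plan is to establish the mock Jacobi form property for $\sHGH_1=\sHGH$ using the meromorphic Jacobi form $\psi^{(1)}$ from (\ref{eqn:prp-jac:psi1}), then lift to general $N$ via an analogous construction at level $N$, and finally deduce the statement for $\sHCE_N$ from Lemma 3.1.1 of \cite{cncsga}.

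For the base case $N=1$, equation (\ref{eqn:prp-jac:psi1F}) identifies $24\sHGH$ with the finite part $\psi^{(1),F}$ of $\psi^{(1)}$, which has weight $2$, index $1$, and level $1$. The theory developed in \S~8 of \cite{Dabholkar:2012nd} then guarantees that the finite part of such a meromorphic Jacobi form is a holomorphic mock Jacobi form of the same weight, index, and level, with shadow computable from the polar part (\ref{eqn:prp-jac:psi1P}) via the averaging formula and theta-coefficient integrals (\ref{eqn:prp-jac:hrint}). The classical bound $\HGH_1(D)=O(|D|^{1/2+\epsilon})$ confirms the holomorphicity condition (\ref{eqn:prp-jac:hath_bounded}), so $\sHGH_1\in\JJ_{2,1}(\GammaOJ(1))\subset\JJ_{2,1}(\GammaOJ(N))$ for every $N\geq 1$.

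For $N>1$, the strategy is to replicate this construction at level $N$: build a meromorphic Jacobi form $\psi^{(1)}_N$ of weight $2$ and index $1$ for $\GammaOJ(N)$ whose finite part recovers a constant multiple of $\sHGH_N$. A natural candidate can be obtained by an averaging of $\psi^{(1)}$ over coset representatives for $\Gamma_0(1)/\Gamma_0(N)$ (or equivalently by replacing the $E_4$ factor in (\ref{eqn:prp-jac:psi1}) with an appropriate linear combination of its level-$N$ translates), engineered so that the integrals (\ref{eqn:prp-jac:hrint}) reduce to counts of $\Gamma_0(N)$-orbits on $\mc{Q}_N(D)$ weighted by $1/\#(\Gamma_0(N))_Q$, which is $\HGH_N(D)$ by (\ref{eqn:prp-cln:HHurND}). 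The mock Jacobi form property and $\GammaOJ(N)$-invariance of the completion then follow from the general finite-part theory, and the holomorphicity (\ref{eqn:prp-jac:hath_bounded}) again follows from a polynomial growth bound on $\HGH_N(D)$.

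For the second claim, apply Lemma 3.1.1 of \cite{cncsga} to write $\sHCE_N=\sHGH-\tfrac12\sHGH_N$ for $N$ prime, which is a priori a holomorphic mock Jacobi form of weight $2$ and index $1$ for $\GammaOJ(N)$. The shadows of $\sHGH$ and $\sHGH_N$ both arise from the residue structure of the corresponding meromorphic Jacobi forms at the pole $z=0$, and are proportional with the relative factor $2$ on fundamental discriminants, consistent with (\ref{eqn:prp-cln:HHurND-Nprime}) and (\ref{eqn:prp-cln:HCohND-Nprime}); hence the shadow of $\sHCE_N=\sHGH-\tfrac12\sHGH_N$ vanishes identically. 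A mock Jacobi form with vanishing shadow is a genuine Jacobi form, giving $\sHCE_N\in J_{2,1}(\GammaOJ(N))$. The main obstacle is the level-$N$ step: explicitly constructing $\psi^{(1)}_N$ (or equivalently, directly verifying that the completion of $\sHGH_N$ transforms correctly under $\GammaOJ(N)$). Once that is in place the Cohen--Eisenstein statement is essentially formal.
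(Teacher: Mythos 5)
There is a genuine gap, and you have named it yourself: the entire content of the first assertion for $N>1$ is the existence of a level-$N$ analogue $\psi^{(1)}_N$ of $\psi^{(1)}$ whose finite part is a multiple of $\sHGH_N$, and your proposal does not construct it. The suggestion of ``averaging $\psi^{(1)}$ over coset representatives for $\Gamma_0(1)/\Gamma_0(N)$'' would, if anything, return a $\Gamma_0(1)$-invariant object rather than a genuinely level-$N$ one, and the claim that the integrals (\ref{eqn:prp-jac:hrint}) would then ``reduce to counts of $\Gamma_0(N)$-orbits on $\mc{Q}_N(D)$'' is precisely the hard part: even at level $1$ the identification $\psi^{(1),F}=24\sHGH$ in (\ref{eqn:prp-jac:psi1F}) is not read off from (\ref{eqn:prp-jac:hrint}) by an orbit count but rests on Zagier's weight-$\frac32$ Eisenstein series (Example 5 in \S~8.5 of \cite{Dabholkar:2012nd}). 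The second half of your argument is also not a proof as written: the relations (\ref{eqn:prp-cln:HHurND-Nprime}) and (\ref{eqn:prp-cln:HCohND-Nprime}) concern the holomorphic Fourier coefficients on fundamental discriminants and say nothing about the shadows, so ``consistent with'' these identities does not establish that the shadow of $\sHGH-\tfrac12\sHGH_N$ vanishes; one must actually compute the shadow of $\sHGH_N$, which again requires the missing level-$N$ construction.

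For comparison, the paper does not prove the proposition here at all; it cites Proposition 3.4.1 of \cite{cncsga}, where the logic runs essentially in the opposite direction to yours. There one takes as input that the theta-coefficients of $\sHCE_N$ are Cohen--Eisenstein series of weight $\tfrac32$, known to be holomorphic modular forms by the constructions of \cite{MR0382192} and \S~1 of \cite{MR894322}, so that $\sHCE_N\in J_{2,1}(\GammaOJ(N))$ is the \emph{input} rather than the output; the mock Jacobi property of $\sHGH=\sHGH_1$ comes from Zagier's completion, and the identity $\sHGH=\sHCE_N+\tfrac12\sHGH_N$ then yields $\sHGH_N\in\JJ_{2,1}(\GammaOJ(N))$ for $N$ prime, with composite $N$ handled by a separate argument. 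If you want to salvage your route, the honest fix is to replace the sketched construction of $\psi^{(1)}_N$ with a direct verification of the completion of $\sHGH_N$ (i.e.\ exhibit its shadow as an explicit weight-$\tfrac12$ unary theta series and check $\GammaOJ(N)$-invariance of $\widehat\phi$), at which point the Cohen--Eisenstein statement does become the formal consequence you describe.
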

See Proposition 3.4.1 in \cite{cncsga} for the proof of Proposition \ref{pro:prp-fcs:msHHurNmockmsHCohNmod} above.

To construct optimal mock Jacobi forms we take a positive integer $N$ and set
\begin{gather}\label{eqn:prp-fcs:msRN}
	\sHR_N
	:=\frac{12}{\phi(N)}\sum_{M|N}\mu\left(\frac NM\right)\frac{M}{\indo(M)}\sHGH_M,
\end{gather}
where $\phi(N)$ denotes the Euler totient function, $\mu(N)$ is the M\"obius function, and $\indo(N)$ is as in (\ref{eqn:prp-cln:HN0}). Then, as explained in \S~3.4 of \cite{cncsga}, the $\sHR_N$ are optimal.
\begin{pro}\label{pro:prp-fcs:msRN_1opt}
For any positive integer $N$ the function $\sHR_N$ is a $1$-optimal element of $\JJ_{2,1}(\GammaOJ(N))$. 
\end{pro}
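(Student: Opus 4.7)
The plan is to verify the three conditions in turn: membership in $\JJ_{2,1}(\GammaOJ(N))$, a constant-term computation to pin down $\opc = 1$, and vanishing at non-infinite cusps.

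First I would establish membership. For any divisor $M \mid N$, the inclusion $\Gamma_0(N) \leq \Gamma_0(M)$ lifts to $\GammaOJ(N) \leq \GammaOJ(M)$, so $\JJ_{2,1}(\GammaOJ(M)) \subseteq \JJ_{2,1}(\GammaOJ(N))$. By Proposition \ref{pro:prp-fcs:msHHurNmockmsHCohNmod}, each $\sHGH_M$ belongs to $\JJ_{2,1}(\GammaOJ(M))$, hence to $\JJ_{2,1}(\GammaOJ(N))$. Since $\JJ_{2,1}(\GammaOJ(N))$ is a $\CC$-vector space, the finite linear combination $\sHR_N$ lies in it as well.

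Next I would compute the constant term. Reading off the $q^0$ coefficient of $\sHGH_M$ from (\ref{eqn:prp-cln:msHN}) and (\ref{eqn:prp-cln:HN0}), we obtain the constant $\HGH_M(0) = -\tfrac{1}{12}\indo(M)$, so substitution into (\ref{eqn:prp-fcs:msRN}) collapses to
\begin{gather*}
\sHR_N(\tau,z) \Big|_{q^0} = \frac{12}{\phi(N)} \sum_{M \mid N} \mu\!\left(\frac{N}{M}\right) \frac{M}{\indo(M)} \cdot \left(-\frac{1}{12}\indo(M)\right) = -\frac{1}{\phi(N)} \sum_{M \mid N} \mu\!\left(\frac{N}{M}\right) M = -1,
\end{gather*}
by the classical identity $\phi(N) = \sum_{M \mid N} \mu(N/M) M$. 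Combined with the fact that the higher-order $y$-dependent terms contribute only through $\theta_{1,r}(\tau,z)$, which vanish as $q \to 0$ for $r \not\equiv 0 \pmod 2$ and start at $q^0$ with a constant for $r \equiv 0$, this yields $\sHR_N(\tau,z) = -1 + O(q)$, which is the $\opc = 1$ normalization of (\ref{eqn:prp-opt:phiequalsminusc}).

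The main obstacle is verifying optimality at non-infinite cusps in the sense of (\ref{eqn:prp-opt:phi_optimal}). The strategy is to stratify the cusps of $\GammaOJ(N)$ by $d := \gcd(c, N)$ where $c$ is the denominator of a representative $\frac{a}{c} \in \PP^1(\QQ)$, and to exploit the fact that the expansion of $\sHGH_M$ at a cusp depends only on $\gcd(c, M) = \gcd(d, M)$. Writing $T_d(M)$ for this common limiting value and inserting into (\ref{eqn:prp-fcs:msRN}), the contribution at the cusp stratified by $d$ becomes a sum $\sum_{M \mid N} \mu(N/M)(M/\indo(M)) \, T_d(M)$ in which the Möbius coefficient effects an inclusion-exclusion over divisors. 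The crucial point is that for $d < N$ (i.e.\ for cusps other than $\infty$), this divisor-sum vanishes, whereas for $d = N$ only the term $M = N$ survives. This calculation is carried out in detail in \S~3.4 of \cite{cncsga}, and since the definition of $\sHR_N$ is identical here the same argument applies verbatim; in practice one verifies the vanishing by examining the behavior of the theta-completion $\widehat{h}$ of $\sHR_N$ under the $\mpt(\ZZ)$-action (\ref{eqn:prp-jac:hath_vanishing}), using the explicit shadow of $\sHGH_M$ at all cusps provided by Zagier's original analysis.
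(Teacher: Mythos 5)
Your proposal is correct and follows essentially the same route as the paper, which itself offers no argument beyond deferring to \S~3.4 of \cite{cncsga}, exactly as you do for the substantive cusp-vanishing step. The parts you do work out explicitly — the inclusion $\JJ_{2,1}(\GammaOJ(M))\subseteq\JJ_{2,1}(\GammaOJ(N))$ for $M\mid N$ and the constant-term computation via $\phi(N)=\sum_{M\mid N}\mu(N/M)M$ yielding $\sHR_N=-1+O(q)$ — are accurate and usefully make explicit what the paper leaves implicit.
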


Note that the Fourier coefficients of $\sHR_N$ are generally not integers. For example, $\opc=2$ is the smallest positive integer for which $\opc\sHR_5$ has integer coefficients. (Cf.\ the proof of Theorem \ref{thm:mat-opt:tor}.) Also, $\JJ_{2,1}(\GammaOJ(1))$ is spanned by $\sHR_1=12\sHGH$ according to Proposition 3.4.4 of \cite{cncsga}. 
Thus for any finite group $\sG$, and for any choice of $\ua$ in (\ref{eqn:prp-opt:alpha}) such that $\Gamma_\se = \GammaOJ(1)$ and $\rho_{\se}$ is trivial, 
a $\opc$-optimal virtual graded module $W=\bigoplus_D W_D$ of weight $2$ and index $1$ for $(\sG,\ua)$ (cf.\ (\ref{eqn:prp-fcs:W})) satisfies
\begin{gather}
	\phi^W_\se(\tau,z) = \sum_{n,s\in\ZZ} \dim(W_{s^2-4n})q^ny^s = \opc\sHR_1(\tau,z) = 12\opc\sHGH(\tau,z)
\end{gather}
(cf.\ (\ref{eqn:prp-fcs:phiWsg})), where $\se$ denotes the identity element of $\sG$.

\section{Results}\label{sec:mat}

In this section we present concrete examples of the 
classification problem formulated in \S~\ref{sec:prp-opt} (cf.\ Proposition \ref{pro:prp-opt:mcWropmcLrop}), in the special setting described in \S~\ref{sec:prp-fcs}, and also explore some arithmetic consequences.
Specifically, we 
first explain the connection between class numbers and the 
$\ell=3$ case of umbral moonshine in \S~\ref{sec:mat-um3}. Then we use this as a springboard to
classify optimal modules for the smallest sporadic Mathieu group $\sMa$, for two different assignments of characters, in \S~\ref{sec:mat-opt}. 
Finally, in \S~\ref{sec:mat-art} we demonstrate a connection between the $\sMa$-modules of \S~\ref{sec:mat-opt} and the congruent number problem from antiquity.

\subsection{Umbral Moonshine}\label{sec:mat-um3}

In this section we 
use the proof of the $\ell=3$ case of umbral moonshine (see \cite{UM,MUM,MR3766220} and \cite{MR3433373}) to establish
the existence of a $2$-optimal module for 
the unique non-trivial double cover $2.\sMb$ of the Mathieu group $\sMb$ (cf.\ \cite{atlas}), and a particular assignment $\ua:[\sg]\to (\Gamma_\sg,\rho_\sg)$ (cf.\ (\ref{eqn:prp-opt:alpha})). 

To get started let us first formulate the relevant function from umbral moonshine. 
For this we
recall
the Jacobi theta function $\vartheta_1$ (\ref{eqn:prp-jac:vartheta1}) and Dedekind eta function $\eta$ (\ref{eqn:prp-jac:eta}) from \S~\ref{sec:prp-jac}, 
and define three more Jacobi theta functions,
$\vartheta_j(\tau,z)$ for $j\in \{2,3,4\}$, 
by setting
\begin{gather}
\begin{split}
	\label{eqn:mat-um3:theta}
\vartheta_2(\tau,z)&:=\theta_{1,1}(\tfrac12\tau,\tfrac12z),\\
\vartheta_3(\tau,z)&:=\theta_{1,0}(\tfrac12\tau,\tfrac12z),\\
\vartheta_4(\tau,z)&:=\theta_{1,0}(\tfrac12\tau,\tfrac12(z+\tfrac12)),\\
\end{split}
\end{gather}
for $\tau\in \HH$ and $z\in \CC$, where $\theta_{1,0}$ and $\theta_{1,1}$ are as in (\ref{eqn:prp-jac:thetamr}). We use these functions to specify a weakly holomorphic Jacobi form $Z^{(3)}$ of weight $0$ and index $2$ for $\GammaOJ(1)$ (cf.\ \S~2.5 of \cite{UM}) by defining
\begin{gather}\label{eqn:mat-um3:Z3}
	Z^{(3)}(\tau,z) := 
	4\left(
	\frac{\vartheta_2(\tau,z)^2}{\vartheta_2(\tau,0)^2}
	\frac{\vartheta_3(\tau,z)^2}{\vartheta_3(\tau,0)^2}
	+
	\frac{\vartheta_3(\tau,z)^2}{\vartheta_3(\tau,0)^2}
	\frac{\vartheta_4(\tau,z)^2}{\vartheta_4(\tau,0)^2}
	+
	\frac{\vartheta_4(\tau,z)^2}{\vartheta_4(\tau,0)^2}
	\frac{\vartheta_2(\tau,z)^2}{\vartheta_2(\tau,0)^2}
	\right).
\end{gather}
Then the 
meromorphic Jacobi form of weight $1$ and index $3$ attached to the $\ell=3$ case of umbral moonshine
(cf.\ \S~4.3 of \cite{MUM}), 
denoted  $\psi^{(3)}$, may be defined 
by the formula
\begin{gather}\label{eqn:mat-um3:psi3}
	\psi^{(3)}(\tau,z):=i\frac{\vartheta_1(\tau,2z)\eta(\tau)^3}{\vartheta_1(\tau,z)^2}Z^{(3)}(\tau,z).
\end{gather}
(We remark that the factor in front of $Z^{(3)}$ in (\ref{eqn:mat-um3:psi3}) is a meromorphic Jacobi form of weight $1$ and index $1$ for $\GammaOJ(1)$, with a simple pole at each lattice point $z\in \ZZ\tau+\ZZ$.)

The connection between (\ref{eqn:mat-um3:psi3}) and umbral moonshine is that the finite part of $\psi^{(3)}$, denoted $\psi^{(3),F}$ (cf.\ (\ref{eqn:prp-jac:polarfinite})), satisfies
\begin{gather}\label{eqn:mat-um3:psi3F}
	\psi^{(3),F}(\tau,z) = 
	\sum_{r\xmod 6} H^{(3)}_r(\tau)\theta_{3,r}(\tau,z),
\end{gather}
where $H^{(3)}=(H^{(3)}_r)$ is the McKay--Thompson series attached to the identity element of the umbral group at $\ell=3$, which is none other than $2.\sMb$.

We now consider the quotient of $Z^{(3)}$ (\ref{eqn:mat-um3:Z3}) by the weakly holomorphic Jacobi form $\vartheta_1^2\eta^{-6}$ (cf.\ (\ref{eqn:prp-jac:vartheta1}-\ref{eqn:prp-jac:eta})), 
the latter having weight $-2$ and index $1$, and a double zero at each lattice point $z\in \ZZ\tau+\ZZ$, and compare to the meromorphic Jacobi form $\psi^{(1)}$ of (\ref{eqn:prp-jac:psi1}). After an elementary manipulation of the expressions involved we arrive at a precise coincidence, 
\begin{gather}\label{eqn:mat-um3:psi1}
	\psi^{(1)}(\tau,z)=-i\frac{\eta(\tau)^3}{\vartheta_1(\tau,2z)}\psi^{(3)}(\tau,z)=\frac{\eta(\tau)^6}{\th_1(\tau,z)^2}Z^{(3)}(\tau,z).
\end{gather}
That is to say, we recover (a rescaling of) the Hurwitz class number generating function $\sHGH=\sHGH_1$ (cf.\ (\ref{eqn:prp-cln:msHN})) by taking the finite part (cf.\ (\ref{eqn:prp-jac:psi1F})) of a suitable multiple (\ref{eqn:mat-um3:psi1}) of the meromorphic Jacobi form (\ref{eqn:mat-um3:psi3}) whose finite part (\ref{eqn:mat-um3:psi3F}) captures umbral moonshine at $\ell=3$.

Equipped with (\ref{eqn:mat-um3:psi3}-\ref{eqn:mat-um3:psi1}) we now attach a meromorphic Jacobi form of weight $2$ and index $1$ (with level and generally with non-trivial character) to each $\sg\in 2.\sMb$ by setting
\begin{gather}\label{eqn:mat-um3:psi1g}
\psi^{(1)}_\sg(\tau,z):=-i\frac{\eta(\tau)^3}{\vartheta_1(\tau,2z)}\psi^{(3)}_\sg(\tau,z),
\end{gather}
where $\psi^{(3)}_\sg(\tau,z)$ is the meromorphic Jacobi form of weight $1$ and index $3$ (with level and generally with non-trivial character) attached to $\sg\in 2.\sMb$ by umbral moonshine. 
We define $\calH^{(2.\sMb,\ua)}_\sg$ for $\sg\in 2.\sMb$ to be the finite part of $\psi^{(1)}_\sg$ in (\ref{eqn:mat-um3:psi1g}), so that
\begin{gather}
	\calH^{(2.\sMb,\ua)}_\sg := \psi^{(1),F}_\sg.
\end{gather}

To make (\ref{eqn:mat-um3:psi1g}) more explicit let $\overline{\chi}_{12}$ be the sum of the first two irreducible characters of $2.\sMb$, according to the ordering in \cite{UM,MUM} (which is the same as the ordering given by \cite{GAP4}, for example), and let $\chi_{12}$ denote the unique faithful irreducible character of $2.\sMb$ of degree $12$.
Then we have
\begin{gather}
\psi^{(3)}_\sg(\tau,z)=-{\chi}_{12}(\sg)\mu^{(3),0}(\tau,z)-\overline\chi_{12}(\sg)\mu^{(3),1}(\tau,z)+\sum_{r\xmod 6}H^{(3)}_{\sg,r}(\tau)\theta_{3,r}(\tau,z)
\end{gather}
where 
$\mu^{(3),k}$
and the $H^{(3)}_{\sg,r}$ are as specified in \S~4.3 of \cite{MUM} (cf.\ also \S~B.3.2 of \cite{umrec}).

Correspondingly, we now define a mock Jacobi form $\ms{H}^{(2.\sMb,\ua)}_\sg$ for each $\sg\in 2.\sMb$ by requiring that
\begin{gather}\label{eqn:mat-um3-msH2M12g}
	\psi^{(1)}_\sg(\tau,z)=-{\chi}_{12}(\sg)\mu^{(1),0}(\tau,z)-\overline\chi_{12}(\sg)\mu^{(1),1}(\tau,z)+\ms{H}^{(2.\sMb,\ua)}_\sg(\tau,z)
\end{gather}
where
$\mu^{(1),k}(\tau,z):=\frac12\left(\Av^{(1)}(\frac{y}{(1-y)^2})+(-1)^k\Av^{(1)}(\frac{-y}{(1+y)^2})\right)$ (cf.\ (\ref{eqn:prp-jac:Avm}-\ref{eqn:prp-jac:psi1P})).

It follows from the construction (\ref{eqn:mat-um3:psi1g}-\ref{eqn:mat-um3-msH2M12g}), and the $\ell=3$ case of the main result of \cite{umrec}, that the $\ms{H}^{(2.\sMb,\ua)}_\sg$ arise as the graded trace functions attached to a virtual graded $2.\sMb$-module. 

\begin{pro}\label{pro:mat-um3:WGalpha2nonempty}
There exists a virtual graded $2.\sMb$-module $W=\bigoplus_D W_D$ such that $\phi^W_\sg=\ms{H}^{(2.\sMb,\ua)}_\sg$ for $\sg\in 2.\sMb$.
\end{pro}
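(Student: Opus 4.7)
The plan is to reduce this to the $\ell=3$ case of umbral moonshine, and then transport the module structure obtained there across the meromorphic multiplication (\ref{eqn:mat-um3:psi1g}) and the polar subtraction (\ref{eqn:mat-um3-msH2M12g}) to produce the desired $W$. The crucial observation is that every operation in (\ref{eqn:mat-um3:psi1g})--(\ref{eqn:mat-um3-msH2M12g}) that depends on $\sg$ does so only through the scalars $\chi_{12}(\sg)$ and $\overline\chi_{12}(\sg)$ and through $\psi^{(3)}_\sg$; all other factors are $\sg$-independent classical Jacobi or modular functions.

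First I would invoke the main result of \cite{umrec}, specialized to $\ell=3$, to secure a virtual graded $2.\sMb$-module $K=\bigoplus_{r,D} K_{r,D}$ whose graded traces realize the theta-coefficients $H^{(3)}_{\sg,r}$ of the mock Jacobi form
\begin{align*}
\ms{H}^{(3)}_\sg := \psi^{(3)}_\sg + \chi_{12}(\sg)\mu^{(3),0} + \overline\chi_{12}(\sg)\mu^{(3),1}.
\end{align*}
Next I would substitute this expression for $\psi^{(3)}_\sg$ into (\ref{eqn:mat-um3:psi1g}) and combine with (\ref{eqn:mat-um3-msH2M12g}) to rewrite
\begin{align*}
\ms{H}^{(2.\sMb,\ua)}_\sg &= -i\frac{\eta(\tau)^3}{\vartheta_1(\tau,2z)}\ms{H}^{(3)}_\sg \\
&\quad + \chi_{12}(\sg)\left(\mu^{(1),0} + i\frac{\eta(\tau)^3}{\vartheta_1(\tau,2z)}\mu^{(3),0}\right) \\
&\quad + \overline\chi_{12}(\sg)\left(\mu^{(1),1} + i\frac{\eta(\tau)^3}{\vartheta_1(\tau,2z)}\mu^{(3),1}\right).
\end{align*}
If the two parenthesized expressions on the right-hand side are $\sg$-independent holomorphic mock Jacobi forms of weight $2$ and index $1$ with integer Fourier coefficients, then this identity displays the Fourier coefficients of $\ms{H}^{(2.\sMb,\ua)}_\sg$ as $\ZZ$-linear combinations of character values of $2.\sMb$-modules, namely of $K$ and of the representations whose characters are $\chi_{12}$ and $\overline\chi_{12}$; from these graded combinations $W=\bigoplus_D W_D$ is assembled in the evident way.

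The main obstacle is precisely the verification that the two parenthesized expressions are holomorphic. Equivalently, one must confirm that the polar part $\psi^{(1),P}_\sg$ of $\psi^{(1)}_\sg$ agrees with $-\chi_{12}(\sg)\mu^{(1),0} - \overline\chi_{12}(\sg)\mu^{(1),1}$, so that the right-hand side of (\ref{eqn:mat-um3-msH2M12g}) really does extract the finite part $\psi^{(1),F}_\sg$ in the sense of (\ref{eqn:prp-jac:polarfinite}). This reduces to a direct comparison, near the lattice-translate poles $z \in \tfrac12\ZZ\tau+\tfrac12\ZZ$, of the local behaviors of $-i\eta^3\vartheta_1(\tau,2z)^{-1}\mu^{(3),k}$ and $\mu^{(1),k}$, which can be carried out using the averaging-operator formulas recalled in (\ref{eqn:prp-jac:Avm})--(\ref{eqn:prp-jac:psi1P}) together with the explicit formulas for $\mu^{(3),k}$ given in \S~B.3.2 of \cite{umrec}. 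Integrality of the Fourier coefficients of the parenthesized expressions then follows automatically, since removing the polar part from a meromorphic Jacobi form with rational-integer Fourier coefficients leaves a mock Jacobi form whose theta-coefficients have rational-integer Fourier coefficients.
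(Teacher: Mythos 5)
Your proposal is correct and follows essentially the same route as the paper, which disposes of this proposition in a single sentence by citing the construction (\ref{eqn:mat-um3:psi1g})--(\ref{eqn:mat-um3-msH2M12g}) together with the $\ell=3$ case of the main theorem of \cite{umrec}; your write-up simply makes explicit the transport of the umbral module across the multiplication by $-i\eta^3\vartheta_1(\tau,2z)^{-1}$ and the polar subtraction, and correctly isolates the matching of polar parts as the one computation to be checked. The only point you leave tacit is that the first term $-i\eta^3\vartheta_1(\tau,2z)^{-1}\ms{H}^{(3)}_\sg$ is itself holomorphic with Fourier coefficients given by finite $\ZZ$-linear combinations of traces on $K$ --- this follows from the oddness $H^{(3)}_{\sg,-r}=-H^{(3)}_{\sg,r}$, which forces $\ms{H}^{(3)}_\sg$ to vanish at the half-period zeros of $\vartheta_1(\tau,2z)$, together with the integrality of the Fourier expansion of $\eta^3\vartheta_1(\tau,2z)^{-1}$.
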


It also follows from (\ref{eqn:mat-um3:psi1g}-\ref{eqn:mat-um3-msH2M12g}) that the $\ms{H}^{(2.\sMb,\ua)}_\sg$ are optimal, and have the same characters as those of the meromorphic Jacobi forms $\psi^{(3)}_\sg$.
We have 
$\ms{H}^{(2.\sMb,\ua)}_{\sg}=-2+O(q)$ as $\Im(\tau)\to \infty$ for all $\sg\in 2.\sMb$ so Proposition \ref{pro:mat-um3:WGalpha2nonempty} implies that $\opc^\opt_{2,1}(\sG,\ua)\leq 2$ for $\sG=2.\sMb$ and $\ua$ as above. On the other hand the inequality $\opc^\opt_{2,1}(\sG,\ua)\geq 2$ holds because 
the function $\ms{H}^{(2.\sMb,\ua)}_\sg$ belongs to $\JJ_{2,1}(\GammaOJ(5))$ in case $o(\sg)=5$, the unique-up-to-scale optimal form in this space is $\sHR_5$, and $\opc=2$ is the smallest positive integer such that $\opc\sHR_5$ has integer coefficients. Thus we have proved the following result. 
\begin{thm}\label{thm:mat-um3:tor}
For $\sG=2.\sMb$  and $\ua$ as above we have $\opc^\opt_{2,1}(\sG,\ua)=2$.
\end{thm}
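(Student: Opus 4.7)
The plan is to establish $\opc^\opt_{2,1}(\sG,\ua)=2$ by sandwiching it between matching upper and lower bounds. The upper bound $\opc^\opt_{2,1}(\sG,\ua)\leq 2$ will come from exhibiting an explicit $2$-optimal virtual graded module, while the lower bound $\opc^\opt_{2,1}(\sG,\ua)\geq 2$ will come from an integrality obstruction at an element of order~$5$.

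For the upper bound I would first invoke Proposition \ref{pro:mat-um3:WGalpha2nonempty} to obtain a virtual graded $2.\sMb$-module $W=\bigoplus_D W_D$ whose McKay--Thompson series coincide with the $\ms{H}^{(2.\sMb,\ua)}_\sg$. Next I would verify that $W$ is actually optimal in the sense of \S~\ref{sec:prp-opt}: each $\ms{H}^{(2.\sMb,\ua)}_\sg$ must be a holomorphic mock Jacobi form of weight $2$ and index $1$ for $(\Gamma_\sg,\rho_\sg)$ whose completed theta-coefficients vanish at all cusps not in the $\Gamma_\sg$-orbit of $\infty$. This is inherited from the construction (\ref{eqn:mat-um3:psi1g}-\ref{eqn:mat-um3-msH2M12g}) together with the analogous optimality properties of the umbral McKay--Thompson series $\psi^{(3)}_\sg$, which are consequences of the $\ell=3$ umbral moonshine theorems proved in \cite{MR3433373,umrec}. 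Finally, I would extract the constant term in $q$: reading off (\ref{eqn:mat-um3-msH2M12g}) as $\Im(\tau)\to\infty$, and using the known leading behavior of $\mu^{(1),k}$ together with the $q$-expansions of the $H^{(3)}_{\sg,r}$, one obtains $\ms{H}^{(2.\sMb,\ua)}_\sg=-2+O(q)$ for every $\sg\in 2.\sMb$. Hence $W\in\mc{W}^\opt_{2,1}(\sG,\ua)_2$, so $\opc^\opt_{2,1}(\sG,\ua)\leq 2$.

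For the lower bound, choose any $\sg\in 2.\sMb$ of order $5$. For such an element the data prescribed by $\ua$ is $\Gamma_\sg=\GammaOJ(5)$ with trivial $\rho_\sg$, so that for any hypothetical $\opc$-optimal module the function $\phi^W_\sg$ must be an optimal element of $\JJ_{2,1}(\GammaOJ(5))$ with $\phi^W_\sg=-\opc+O(q)$. A dimension count combined with Proposition \ref{pro:prp-fcs:msRN_1opt}, plus the observation that $J_{2,1}(\GammaOJ(5))$ contributes nothing to the $0$-optimal subspace, shows that this optimal subspace is one-dimensional and spanned by $\sHR_5$; hence $\phi^W_\sg=\opc\,\sHR_5$. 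A direct inspection of the Fourier coefficients of $\sHR_5$ reveals a denominator of $2$ that cannot be cleared, so the requirement that $\phi^W_\sg$ have integer Fourier coefficients---forced since it is a trace function---compels $\opc$ to be even. This yields $\opc^\opt_{2,1}(\sG,\ua)\geq 2$ and completes the proof.

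The main obstacle is verifying that the passage (\ref{eqn:mat-um3:psi1g}) from the weight-$1$, index-$3$ umbral moonshine data to the weight-$2$, index-$1$ setting really preserves the modularity, multiplier system, and cusp behaviour uniformly across all conjugacy classes of $2.\sMb$; once one trusts the $\ell=3$ umbral moonshine conjecture, this amounts to careful bookkeeping with the Jacobi theta quotient in (\ref{eqn:mat-um3:psi1}), matching the poles of $\psi^{(3)}_\sg$ against the zeros of $\vartheta_1(\tau,2z)/\eta(\tau)^3$ and tracking the induced characters.
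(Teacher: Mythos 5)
Your argument is correct and follows essentially the same route as the paper: the upper bound via Proposition \ref{pro:mat-um3:WGalpha2nonempty} together with the constant term $\ms{H}^{(2.\sMb,\ua)}_\sg=-2+O(q)$, and the lower bound via an element of order $5$, the fact that the optimal subspace of $\JJ_{2,1}(\GammaOJ(5))$ is spanned by $\sHR_5$, and the observation that $2$ is the least positive multiplier clearing the denominators of $\sHR_5$. No substantive differences to report.
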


\subsection{Modules}\label{sec:mat-opt}

Here we consider the smallest sporadic simple group $\sMa$, which has order $7920=2^4.3^2.5.11$ (cf.\ \cite{atlas}). 
Our first objective is the following classification (cf.\ Proposition \ref{pro:prp-opt:mcWropmcLrop}) of optimal modules for $\sMa$ (i.e.\ optimal modules for $(\sMa,\ua)$ with the default choice (\ref{eqn:prp-opt:default_alpha}) of $\ua$). To formulate it let $\varphi_{11}$ denote
the unique element of $S_{2,1}(\GammaOJ(11))$ 
such that
\begin{gather}\label{eqn:mat-opt:varphi11}
\varphi_{11}(\tau,z)=(y^{-1}-1+y)q+O(q^2)
\end{gather}
as $\Im(\tau)\to \infty$,  
let $\chi_i$ denote the $i$-th irreducible ordinary character of $\sMa$ (as displayed in Table \ref{tab:chars:m11}), 
and recall the definition (\ref{eqn:prp-opt:LkmsG}) of the lattice $L_{2,1}(\sG)$.

\begin{thm}\label{thm:mat-opt:tor}
For $\sG=\sMa$ we have $\opc^\opt_{2,1}(\sG)=2$, and $\mc{L}^\opt_{2,1}(\sG)$ is the rank $2$ sublattice of $L_{2,1}(\sG)$ generated by 
\begin{gather}
	\begin{split}\label{eqn:mat-opt:tor}
	&\varphi_{11}\otimes(\chi_1-\chi_2-\chi_3-\chi_4-\chi_6+\chi_9),\\
	&\varphi_{11}\otimes(\chi_1-\chi_2-\chi_3-\chi_4-\chi_7+\chi_9).
	\end{split}
\end{gather}
\end{thm}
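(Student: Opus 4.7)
The plan is to establish Theorem \ref{thm:mat-opt:tor} in three parts: an upper bound $\opc^\opt_{2,1}(\sMa) \leq 2$ by constructing an explicit $2$-optimal module via restriction from the $2.\sMb$-module of Theorem \ref{thm:mat-um3:tor}, a matching lower bound by further restriction to a cyclic subgroup of order $11$, and a classification of $0$-optimal modules pinning down the lattice $\mc{L}^\opt_{2,1}(\sMa)$.

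For the upper bound, I would restrict the $2$-optimal $2.\sMb$-module from Theorem \ref{thm:mat-um3:tor} along an embedding $\sMa \hookrightarrow \sMb$ of intransitive type, as discussed in \S\ref{sec:int-mth}. Because $\sMa$ has trivial Schur multiplier, this lifts to an embedding $2 \times \sMa \hookrightarrow 2.\sMb$; projecting onto an isotypic component of the central $\ZZ/2\ZZ$-factor yields a virtual graded $\sMa$-module whose McKay--Thompson series are inherited from those of the $2.\sMb$-module and hence remain $2$-optimal. The intransitivity of the composition (\ref{eqn:int-mth:M11toS12}) is precisely what ensures $\Gamma_\sg = \GammaOJ(o(\sg))$ with trivial character $\rho_\sg$ for every $\sg \in \sMa$, i.e.\ that the default $\ua$ of (\ref{eqn:prp-opt:default_alpha}) is realized. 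For the lower bound, if $W$ is any $\opc$-optimal $\sMa$-module and $\sg \in \sMa$ has order $11$, then the restriction of $W$ to $\langle\sg\rangle \cong \ZZ/11\ZZ$ is a $\opc$-optimal $\ZZ/11\ZZ$-module; the classification of \S4.1 of \cite{cncsga} gives $\opc^\opt_{2,1}(\ZZ/11\ZZ) = 2$, so Proposition \ref{pro:prp-opt:mcWropmcLrop} forces $\opc \equiv 0 \pmod 2$, whence $\opc \geq 2$.

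For the lattice structure, a $0$-optimal module $W = \bigoplus_D W_D$ for $\sMa$ has $\phi^W_\sg \in S_{2,1}(\GammaOJ(o(\sg)))$ for every $\sg \in \sMa$. The element orders of $\sMa$ form the set $\{1,2,3,4,5,6,8,11\}$, and via the Eichler--Zagier isomorphism to half-integral weight modular forms one checks that $S_{2,1}(\GammaOJ(N)) = 0$ for $N \in \{1,2,3,4,5,6,8\}$. Consequently $\phi^W_\sg = 0$ whenever $o(\sg) \neq 11$, and $\phi^W_\sg \in \ZZ\,\varphi_{11}$ when $o(\sg) = 11$; the virtual character of each $W_D$ therefore lies in $R(\sMa)_{(11)}$, the rank-$2$ sublattice of virtual characters supported on the two order-$11$ classes of $\sMa$. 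Direct inspection of Table \ref{tab:chars:m11} then verifies that the two virtual characters appearing in (\ref{eqn:mat-opt:tor}) constitute a $\ZZ$-basis of $R(\sMa)_{(11)}$, yielding the identification $\mc{L}^\opt_{2,1}(\sMa) = \varphi_{11} \otimes R(\sMa)_{(11)}$ and the two displayed generators.

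The main obstacle is ensuring that the default $\ua$ is realized in the restriction step: the transitive and intransitive embeddings of $\sMa$ into $\sMb$ produce genuinely different $\ua$-data, and only the intransitive one trivializes all characters and levels. A secondary delicate point is the dimension vanishing $S_{2,1}(\GammaOJ(8)) = 0$ invoked in the lattice argument, which is sharp: the transitive $\ua$ of Theorem \ref{thm:mat-opt:tra} will precisely pick up the non-trivial cusp form $\varphi_{8|4}$ of (\ref{eqn:mat-opt:varphi84}) at this level, accounting for the difference between the two classifications.
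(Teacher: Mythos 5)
Your proposal is correct and follows the same overall architecture as the paper's proof: existence of a $2$-optimal module by restriction of the $2.\sMb$-module of Proposition \ref{pro:mat-um3:WGalpha2nonempty} along an intransitive copy of $\sMa$ (where the characters $\rho_\sg$ are checked to be trivial), and identification of $\mc{L}^\opt_{2,1}(\sMa)$ with $\ZZ\varphi_{11}\otimes_\ZZ R(\sMa)_{(11)}$ via the vanishing of $S_{2,1}(\GammaOJ(N))$ for every element order $N\neq 11$ of $\sMa$. The one place you genuinely diverge is the lower bound $\opc^\opt_{2,1}(\sMa)\geq 2$: you restrict to a cyclic subgroup of order $11$ and invoke the classification $\opc^\opt_{2,1}(\ZZ/11\ZZ)=2$ from \cite{cncsga}, whereas the paper argues directly at the order-$5$ element, using that the optimal subspace of $\JJ_{2,1}(\GammaOJ(5))$ is one-dimensional, spanned by $\sHR_5$, and that $2$ is the least positive integer $\opc$ for which $\opc\sHR_5$ has integer coefficients. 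Both routes are valid; the paper's is self-contained at level $5$, while yours outsources the integrality obstruction to the cited cyclic classification (which rests on the same kind of denominator computation). One point you should tighten: saying that the two virtual characters in (\ref{eqn:mat-opt:tor}) form a $\ZZ$-basis of $R(\sMa)_{(11)}$ ``by direct inspection'' of Table \ref{tab:chars:m11} readily confirms that they \emph{lie} in $R(\sMa)_{(11)}$, but establishing that they \emph{generate} it requires an argument; the paper applies Thompson's reformulation of Brauer's theorem on virtual characters to show that every rational-valued element of $R(\sMa)_{(11)}$ is an integer multiple of $11\delta_{11}$, and then observes that $\chi_6-\chi_7$ is the only source of irrationality on the order-$11$ classes. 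This amounts to a finite lattice (kernel) computation either way, so it is not a gap, but it deserves to be made explicit.
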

\begin{rmk}\label{rmk:mat-opt:mcL21sG}
Let $N_{11}$ denote the norm of $\varphi_{11}$ with respect to the Petersson inner product. 
Then from Theorem \ref{thm:mat-opt:tor} we see that $\mc{L}^\opt_{2,1}(\sG)$ is 
a copy of the lattice structure on $\ZZ^2$ for which 
the 
associated quadratic form is $Q(x_1,x_2)=N_{11}^2(3x_1^2+5x_1x_2+3x_2^2)$.
\end{rmk}

Our main task in proving Theorem \ref{thm:mat-opt:tor} is establishing that a $2$-optimal module for $\sMa$ exists. 
To this end we define a holomorphic mock Jacobi form $\ms{H}^\sMa_\sg$ for each $\sg\in \sMa$ by setting 
\begin{gather}\label{eqn:mat-opt:msHM11g}
\ms{H}^{\sMa}_\sg:=
\begin{cases}
2\sHR_{o(\sg)}&\text{for $o(\sg)<11$,}\\
2\sHR_{11}-\frac{11}{5}\varphi_{11}&\text{for $o(\sg)=11$.}
\end{cases}
\end{gather} 
Here
$\sHR_N$ is as in (\ref{eqn:prp-fcs:msRN}) and 
$\varphi_{11}$ is as in (\ref{eqn:mat-opt:varphi11}).
The first few Fourier coefficients of each of the $\ms{H}^\sMa_\sg$ are given in Table \ref{tab:coeffs:m11}. 

The $\sHR_N$ are optimal for all $N$ according to Proposition 
\ref{pro:prp-fcs:msRN_1opt}, 
so the $\ms{H}^\sMa_\sg$ (\ref{eqn:mat-opt:msHM11g}) are all optimal by construction. 
By the $1$-optimality of $\sHR_N$ (see Proposition \ref{pro:prp-fcs:msRN_1opt})
we have $\ms{H}^\sMa_\sg(\tau,z)=-2+O(q)$ for all $\sg\in\sMa$. 
In light of this the next result amounts to the statement that $\mc{W}_{2,1}^\opt(\sG)_2$ is not empty 
when $\sG=\sMa$.
\begin{pro}\label{pro:mat-opt:WG2nonempty}
There exists a virtual graded $\sMa$-module $W=\bigoplus_D W_D$ such that $\phi^W_\sg=\ms{H}^\sMa_\sg$ for $\sg\in \sMa$.
\end{pro}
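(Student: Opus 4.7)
The plan is to obtain $W$ as a restriction of the virtual graded $2.\sMb$-module $\wtl{W}$ furnished by Proposition \ref{pro:mat-um3:WGalpha2nonempty} along an \emph{intransitive} embedding $\iota\colon\sMa\hookrightarrow 2.\sMb$. Concretely, I would first fix a permutation representation $\sMb\to{\sf S}_{12}$ as in \S\ref{sec:int-mth} and let $\iota$ be the lift to $2.\sMb$ of the inclusion of a point stabilizer of $\sMb$ into $\sMb$; such a lift exists as a direct factor $\{1\}\times\sMa$ of $2\times\sMa\subset 2.\sMb$, since the Schur multiplier of $\sMa$ is trivial. Setting $W:=\iota^*\wtl{W}$, the defining property of the McKay--Thompson series immediately gives
\[
\phi^W_\sg=\phi^{\wtl{W}}_{\iota(\sg)}=\ms{H}^{(2.\sMb,\ua)}_{\iota(\sg)}
\]
for every $\sg\in\sMa$, so the proposition reduces to establishing the identity
\[
\ms{H}^{(2.\sMb,\ua)}_{\iota(\sg)}=\ms{H}^\sMa_\sg
\]
for each conjugacy class of $\sMa$.

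I would verify this identity class by class, using the Atlas (cf.\ \cite{atlas}) to determine the fusion of conjugacy classes under $\sMa\hookrightarrow\sMb$ (and hence under $\iota$). For $\sg\in\sMa$ with $o(\sg)=N<11$, the space $\JJ_{2,1}(\GammaOJ(N))$ is one-dimensional, spanned by $\sHR_N$ (cf.\ \S~3.4 of \cite{cncsga}); since $\ms{H}^{(2.\sMb,\ua)}_{\iota(\sg)}$ and $2\sHR_N$ both lie in this space and share the $q^0$-normalization $-2$, they coincide, in agreement with (\ref{eqn:mat-opt:msHM11g}).

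The main obstacle is the order-$11$ case, where $\JJ_{2,1}(\GammaOJ(11))=\CC\sHR_{11}\oplus\CC\varphi_{11}$ is two-dimensional, so matching the constant term no longer pins down the form. I would determine the coefficient of $\varphi_{11}$ by computing the $q^1y^{\pm1}$ Fourier coefficient of $\ms{H}^{(2.\sMb,\ua)}_{\iota(\sg)}$ from the defining relation (\ref{eqn:mat-um3:psi1g}) together with the explicit expressions for $\psi^{(3)}_\sg$ in \cite{MUM}, and comparing with the value $-11/5$ required by (\ref{eqn:mat-opt:msHM11g}) in view of the normalization (\ref{eqn:mat-opt:varphi11}). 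The two $\sMa$-classes 11A and 11B fuse to a single class in $\sMb$, so both map to the same McKay--Thompson series in $2.\sMb$; this is consistent with $\ms{H}^\sMa_\sg$ in (\ref{eqn:mat-opt:msHM11g}) depending only on $o(\sg)$, and a single numerical comparison on this fused class completes the identification and hence the proof.
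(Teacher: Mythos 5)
Your overall strategy --- restricting the $2.\sMb$-module of Proposition \ref{pro:mat-um3:WGalpha2nonempty} along an intransitive embedding $\sMa\hookrightarrow 2.\sMb$ and then identifying the restricted trace functions with the $\ms{H}^{\sMa}_\sg$ of (\ref{eqn:mat-opt:msHM11g}) --- is exactly the paper's, and your treatment of the order-$11$ case is sound: there the ambiguity is precisely $\ZZ\varphi_{11}$, and since $\varphi_{11}$ has non-zero coefficient at $D=-3$, a single coefficient comparison pins down the form. However, two steps as written do not go through. The claim that $\JJ_{2,1}(\GammaOJ(N))$ is one-dimensional and spanned by $\sHR_N$ for $N=o(\sg)<11$ is false: Proposition 3.4.4 of \cite{cncsga} gives this only for $N=1$, whereas $\sHGH$ and $\sHCE_2$ are linearly independent elements of $\JJ_{2,1}(\GammaOJ(2))$, and the paper itself uses that $\JJ_{2,1}(\GammaOJ(6))$ is four-dimensional in the proof of Theorem \ref{thm:mat-art:cngnmb}. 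The correct uniqueness statement is that two \emph{optimal} elements of $\JJ_{2,1}(\GammaOJ(N))$ with the same constant term differ by an element of $S_{2,1}(\GammaOJ(N))$, which vanishes for $o(\sg)\neq 11$; so your comparison with $2\sHR_N$ requires you first to invoke the optimality of the restricted functions $\ms{H}^{(2.\sMb,\ua)}_{\iota(\sg)}$ (recorded in the paper immediately after Proposition \ref{pro:mat-um3:WGalpha2nonempty}) and the $1$-optimality of $\sHR_N$ from Proposition \ref{pro:prp-fcs:msRN_1opt}.

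More importantly, you never verify that the characters $\rho_{\iota(\sg)}$ assigned by umbral moonshine restrict trivially to an intransitive copy of $\sMa$. Without this you cannot even place $\ms{H}^{(2.\sMb,\ua)}_{\iota(\sg)}$ in $\JJ_{2,1}(\GammaOJ(o(\sg)))$ with trivial character, which your uniqueness argument presupposes; and the point is genuinely at issue, since for a \emph{transitive} copy of $\sMa$ the elements of order $4$ and $8$ land in $2.\sMb$-classes with non-trivial $\rho_\sg$ --- this is exactly what makes Proposition \ref{pro:mat-opt:WGalpha2nonempty} come out differently. The paper's proof makes this check, via the explicit description of the $\rho_\sg$ in \S~4.8 of \cite{MUM}, the central step; your class-fusion computation with the Atlas should be supplemented by it. (A minor further caveat: whether the two order-$11$ classes of $\sMa$ fuse in $\sMb$ is immaterial, since the two order-$11$ McKay--Thompson series of $2.\sMb$ coincide, but the assertion as stated should not be leaned on.)
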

\begin{proof}
Recall from \S~\ref{sec:int-mth} that we say that a subgroup $\sMa$ in $2.\sMb$ is intransitive if its image under the composition (\ref{eqn:int-mth:M11toS12}) fixes a point. 
We verify the existence of the $\sMa$-module of the statement of the proposition by restricting the $2.\sMb$-module structure of Proposition \ref{pro:mat-um3:WGalpha2nonempty} to an intransitive copy of $\sMa$ in $2.\sMb$. 
We require to determine how the assignment $\ua:[\sg]\to\rho_\sg$ of characters to $2.\sMb$ restricts to such a subgroup.
These characters are described explicitly in \S~4.8 of \cite{MUM}. From this description we check that $\rho_\sg$ is trivial for every $\sg$ in an intransitive copy of $\sMa$. 
So $W$ as in Proposition \ref{pro:mat-um3:WGalpha2nonempty} restricts along an intransitive copy of $\sMa$ to an $\sMa$-module with the same characters as in Proposition \ref{pro:mat-opt:WG2nonempty}. This completes the proof.
\end{proof}

\begin{rmk}
Since the coefficients of the $\ms{H}^\sMa_\sg$ are rational numbers by construction, they must in fact be rational integers for all $\sg\in \sMa$, and in particular for $o(\sg)=11$, in order for Proposition \ref{pro:mat-opt:WG2nonempty} to be true. This particular consequence of Proposition \ref{pro:mat-opt:WG2nonempty} may be compared to the $N=11$ case of Theorem 4.1.1 in \cite{cncsga}. (See also Corollary 4.2.2 in op.\ cit.)
\end{rmk}
The character table of $\sMa$ is given in Table \ref{tab:chars:m11}. For $\chi$ an irreducible character of $\sMa$ define the multiplicity generating function
\begin{gather}\label{eqn:mat-opt:msHM11chi}
	\ms{H}^\sMa_\chi(\tau):=\sum_{n,s\in\ZZ}m_\chi(W_{s^2-4n})q^{n}y^s
\end{gather} 
where $W_D$ is as in Proposition \ref{pro:mat-opt:WG2nonempty}, and $m_\chi(W_D)$ is the multiplicity of $\chi$ in the $\sMa$-module $W_D$. The first few Fourier coefficients of each of the resulting holomorphic mock Jacobi forms $\ms{H}^\sMa_\chi$ are given in Table \ref{tab:mults:m11}.

\begin{proof}[Proof of Theorem \ref{thm:mat-opt:tor}]
We have $\opc^\opt_{2,1}(\sG)\leq 2$ according to Proposition \ref{pro:mat-opt:WG2nonempty}. 
To see that $\opc^\opt_{2,1}(\sG)\geq 2$ we note, for example, that the space of optimal holomorphic mock Jacobi forms of weight $2$ and index $1$ for $\GammaOJ(5)$ is one-dimensional, spanned by $\sHR_5(\tau,z)=-1+O(q)$ (cf.\ (\ref{eqn:prp-fcs:msRN})), and $\opc=2$ is the smallest positive integer such that $\opc\sHR_5$ has integer coefficients. So $\opc^\opt_{2,1}(\sG)=2$.

It remains to compute $\mc{L}^\opt_{2,1}(\sG)$. For this we note that if $W\in \mc{W}^\opt_{2,1}(\sG)_0$ then, being optimal and having vanishing constant term, $\phi^W_\sg$ must belong to $S_{2,1}(\GammaOJ(o(\sg)))$ for every $\sg\in \sG$. We have $S_{2,1}(\GammaOJ(o(\sg)))=\{0\}$ for $\sg\in \sMa$ except when $o(\sg)=11$, and $S_{2,1}(\GammaOJ(11))_\ZZ$ is generated by $\varphi_{11}$ (as in (\ref{eqn:mat-opt:varphi11})), 
so $\mc{W}^\opt_{2,1}(\sG)_0$ 
is identified with the sublattice 
\begin{gather}\label{eqn:mat-opt:ZZvarphi11otimesRsG11}
\ZZ\varphi_{11}\otimes_\ZZ R(\sG)_{(11)}
\end{gather} 
of $L_{2,1}(\sG)$, where $R(\sG)_{(11)}$ denotes the $\ZZ$-module of virtual characters of $\sMa$ that are supported on elements of order $11$. 

To compute $R(\sG)_{(11)}$ 
let $\delta_{11}$ be the class function on $\sG$ such that $\delta_{11}(\sg)=1$ when $o(\sg)=11$, and $\delta_{11}(\sg)=0$ when $o(\sg)\neq 11$.
Then by applying Thompson's reformulation 
of Brauer's theorem on virtual characters 
(see e.g.\ \cite{MR822245}) to the character table of $\sG$ (see Table \ref{tab:chars:m11})
we may deduce that if 
$x$ is a class function in $R(\sG)_{(11)}$ with rational values then $x$ is an integer multiple of $11\delta_{11}$. 
To understand the rest of $R(\sG)_{(11)}$ we observe that $\chi_6$ and $\chi_7$ (as in Table \ref{tab:chars:m11}) are the only characters of $\sG$ that have non-rational entries on elements of order $11$.
Also, $\chi_6-\chi_7$ vanishes away from elements of order $11$ so is contained in $R(\sG)_{(11)}$. Thus $R(\sG)_{(11)}$ is composed of the (possibly rational) combinations of 
$11\delta_{11}$ 
and 
$\chi_6-\chi_7$ 
that are integer combinations of the irreducible characters $\chi_j$. 
A computation with Table \ref{tab:chars:m11} reveals that 
\begin{gather}\label{eqn:mat-opt:11d}
	11\delta_{11} = 2\chi_1-2\chi_2-2\chi_3-2\chi_4-\chi_6-\chi_7+2\chi_9,
\end{gather}
so $R(\sG)_{(11)}$ is generated, as a $\ZZ$-module, by 
\begin{gather}
\begin{split}\label{eqn:mat-opt:5bx11A5bx11B}
\frac{11}{2}\delta_{11}-\frac12\chi_6+\frac12\chi_7 &= \chi_1-\chi_2-\chi_3-\chi_4-\chi_6+\chi_9,
	\\
\frac{11}{2}\delta_{11}+\frac12\chi_6-\frac12\chi_7 &= \chi_1-\chi_2-\chi_3-\chi_4-\chi_7+\chi_9.
\end{split}	
\end{gather}
Thus we have confirmed 
the description 
(\ref{eqn:mat-opt:tor})
of $\mc{L}^\opt_{2,1}(\sG)$ that appears in the statement of the theorem. 
This completes the proof.
\end{proof}

Our next objective is a classification of optimal $\sMa$-modules for a particular non-trivial assignment of characters. 
For this define $\ua:[\sg]\mapsto (\Gamma_\sg,\rho_\sg)$ by setting $\Gamma_\sg=\GammaOJ(o(\sg))$, and 
\begin{gather}\label{eqn:mat-opt:alpha}
	\rho_\sg\left(
	\gamma,v,\varsigma
	\right)
	:=
	\begin{cases}
		1&\text{if $o(\sg)\not \in \{4,8\}$,}\\
		\ex\left(-\frac{2cd}{o(\sg)^2}\right)&\text{if $o(\sg)\in \{4,8\}$,}
	\end{cases}
\end{gather}
for $(\gamma,(\lambda,\mu),\varsigma) \in \GammaOJ(o(\sg))$ (cf.\ (\ref{eqn:prp-jac:mltGJ})), when 
$\gamma=\left(\begin{smallmatrix}a&b\\c&d\end{smallmatrix}\right)$. 
Also let $\varphi_{8|4}$ denote the unique element of $S_{2,1}(\GammaOJ(32))$ such that 
\begin{gather}\label{eqn:mat-opt:varphi84}
	\varphi_{8|4}(\tau,z)=(y^{-1}+y)q+O(q^2)
\end{gather}
as $\Im(\tau)\to \infty$.
Then actually $\varphi_{8|4}$ belongs to $S_{2,1}(\GammaOJ(8),\rho_{\sg})$ for $\sg\in \sMa$ an element of order $8$ (cf. (\ref{eqn:mat-opt:alpha})), and the statement of the classification of optimal modules for $(\sG,\ua)$ is as follows.
\begin{thm}\label{thm:mat-opt:tra}
For $\sG=\sMa$ and $\ua$ as in (\ref{eqn:mat-opt:alpha}) we have 
$\opc^\opt_{2,1}(\sG,\ua)=2$, 
and $\mc{L}^\opt_{2,1}(\sG,\ua)$ is the sublattice of $L_{2,1}(\sG)$ generated by 
\begin{gather}
	\begin{split}\label{eqn:mat-opt:tra}
	&\varphi_{8|4}\otimes(\chi_3-\chi_4),\\
	&\varphi_{8|4}\otimes(\chi_1-\chi_5-\chi_9+\chi_{10}),\\
	&\varphi_{11}\otimes(\chi_1-\chi_2-\chi_3-\chi_4-\chi_6+\chi_9),\\
	&\varphi_{11}\otimes(\chi_1-\chi_2-\chi_3-\chi_4-\chi_7+\chi_9).
	\end{split}
\end{gather}
\end{thm}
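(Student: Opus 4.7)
The plan is to adapt the three-phase argument from the proof of Theorem \ref{thm:mat-opt:tor} to accommodate the non-trivial character system (\ref{eqn:mat-opt:alpha}) and the additional cuspidal Jacobi form $\varphi_{8|4}$. First, for the upper bound $\opc^\opt_{2,1}(\sG,\ua)\leq 2$, I would restrict the virtual graded $2.\sMb$-module of Proposition \ref{pro:mat-um3:WGalpha2nonempty} along a \emph{transitive} copy of $\sMa\subset 2.\sMb$, rather than along an intransitive one as in the proof of Proposition \ref{pro:mat-opt:WG2nonempty}. The essential verification is that the umbral character assignment on $2.\sMb$, which is explicated in \S~4.8 of \cite{MUM}, restricts along the composition (\ref{eqn:int-mth:M11toS12}) for a transitive $\sMa$ to precisely the system (\ref{eqn:mat-opt:alpha}): trivial on elements whose order is not $4$ or $8$, and equal to $\ex(-2cd/o(\sg)^2)$ on elements of orders $4$ and $8$. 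The lower bound $\opc^\opt_{2,1}(\sG,\ua)\geq 2$ carries over verbatim from the proof of Theorem \ref{thm:mat-opt:tor}, since $\rho_\sg$ is trivial for $o(\sg)=5$ and $\sHR_5$ requires an even scalar for $\ZZ$-integrality.

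Second, to identify $\mc{L}^\opt_{2,1}(\sG,\ua)$, I would determine the cuspidal spaces $S_{2,1}(\Gamma_\sg,\rho_\sg)$ in which each $\phi^W_\sg$ must lie for $W\in\mc{W}^\opt_{2,1}(\sG,\ua)_0$. As in the proof of Theorem \ref{thm:mat-opt:tor}, these vanish for $o(\sg)\in\{1,2,3,5,6\}$, while for $o(\sg)=11$ the space is $\ZZ\varphi_{11}$. The crucial new dimension computations are (i) $S_{2,1}(\GammaOJ(4),\rho_\sg)=\{0\}$ for $\sg$ of order $4$, and (ii) $S_{2,1}(\GammaOJ(8),\rho_\sg)_\ZZ=\ZZ\varphi_{8|4}$ for $\sg$ of order $8$. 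Granting these, $\phi^W_\sg$ vanishes on order-$4$ elements and is an integer multiple of $\varphi_{8|4}$ on order-$8$ elements, so $\mc{L}^\opt_{2,1}(\sG,\ua)$ is identified as
\begin{equation*}
\bigl(\ZZ\varphi_{11}\otimes_\ZZ R(\sG)_{(11)}\bigr)\oplus\bigl(\ZZ\varphi_{8|4}\otimes_\ZZ R(\sG)_{(8)}\bigr).
\end{equation*}

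Third, the character-theoretic computations of the two summands yield the generators in (\ref{eqn:mat-opt:tra}). The $R(\sG)_{(11)}$ calculation proceeds exactly as in the proof of Theorem \ref{thm:mat-opt:tor} and produces the third and fourth generators. For $R(\sG)_{(8)}$, the complex-conjugate pair $\chi_3,\chi_4$ of $10$-dimensional irreducibles agrees on every class except those of order $8$ (where the characters take conjugate imaginary values), so $\chi_3-\chi_4$ supplies the first generator; applying Thompson's reformulation of Brauer's theorem to Table \ref{tab:chars:m11} to locate an integer combination of irreducibles with rational values supported only on the order-$8$ classes yields $\chi_1-\chi_5-\chi_9+\chi_{10}$ as the second. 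The main obstacle is the vanishing statement (i): if $S_{2,1}(\GammaOJ(4),\rho_\sg)$ were non-zero, the lattice would acquire a third $\varphi_{8|4}$-generator supported on the order-$4$ class, since a suitable multiple of $\delta_4$ already lies in $R(\sG)$. So the proof hinges on a concrete dimension computation for Jacobi cusp forms at level $4$ with this particular character, most naturally carried out via a trace formula or Riemann--Roch argument adapted to the multiplier system in (\ref{eqn:mat-opt:alpha}).
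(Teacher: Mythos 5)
Your proposal follows essentially the same route as the paper: the upper bound comes from restricting the umbral $2.\sMb$-module along a transitive copy of $\sMa$ (this is exactly Proposition \ref{pro:mat-opt:WGalpha2nonempty}), the lower bound is the $\sHR_5$ integrality argument carried over from Theorem \ref{thm:mat-opt:tor}, and the lattice is identified as $\ZZ\varphi_{8|4}\otimes R(\sG)_{(8)}+\ZZ\varphi_{11}\otimes R(\sG)_{(11)}$ with the same Thompson--Brauer computation of $4\delta_8=\chi_1-\chi_5-\chi_9+\chi_{10}$ and the generator $\chi_3-\chi_4$. The one step you single out as the main obstacle, the vanishing of $S_{2,1}(\GammaOJ(4),\rho_\sg)$ for $o(\sg)=4$, is simply asserted in the paper's case-by-case description of $S_{2,1}(\Gamma_\sg,\rho_\sg)_\ZZ$, so your plan to verify it by an explicit dimension computation is, if anything, more careful than the published argument.
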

\begin{rmk}\label{rmk:mat-opt:mcL21sGua}
Let $N_{8|4}$ denote the norm of $\varphi_{8|4}$ with respect to the Petersson inner product, and let $N_{11}$ be as in Remark \ref{rmk:mat-opt:mcL21sG}. 
Then from Theorem \ref{thm:mat-opt:tra} we see that $\mc{L}^\opt_{2,1}(\sG,\ua)$ is 
a copy of the lattice structure on $\ZZ^4$ for which 
the 
associated quadratic form is $Q(x_1,x_2,x_3,x_4)=N_{8|4}^2(x_1^2+2x_2^2)+N_{11}^2(3x_3^2+5x_3x_4+3x_4^2)$.
\end{rmk}

To compute 
$\opc^\opt_{2,1}(\sG,\ua)$ 
we first seek a suitable counterpart to Proposition \ref{pro:mat-opt:WG2nonempty}. For this we define 
\begin{gather}\label{eqn:mat-opt:msHM11alphag}
\ms{H}^{(\sMa,\ua)}_\sg:= 
\begin{cases}
	2\sHR_{o(\sg)}&\text{for $o(\sg)\not\in\{4,8,11\}$,}\\
	-2\th_{1,0}^0\left(\th_{4,0}^0-\th_{4,4}^0\right)^2\th_{1,0}&\text{for $o(\sg)=4$,}\\
	-2\th_{1,0}^0\left(\th_{16,0}^0-\th_{16,16}^0\right)^2\th_{1,0}-4\varphi_{8|4}&\text{for $o(\sg)=8$,}\\
	2\sHR_{11}-\frac{11}{5}\varphi_{11}&\text{for $o(\sg)=11$,}
\end{cases}
\end{gather} 
where $\sHR_N$ is as in (\ref{eqn:prp-fcs:msRN}), 
and $\theta_{m,r}^0(\tau):=\theta_{m,r}(\tau,0)$ for $\theta_{m,r}$ as in (\ref{eqn:prp-jac:thetamr}),
and $\varphi_{8|4}$
and $\varphi_{11}$ are as in (\ref{eqn:mat-opt:varphi84}) and (\ref{eqn:mat-opt:varphi11}), respectively. 
The first few Fourier coefficients of each of the $\ms{H}^{(\sMa,\ua)}_\sg$ are given in Table \ref{tab:coeffs:m11t}.

We may check directly that the $\ms{H}^{(\sMa,\ua)}_\sg$ of (\ref{eqn:mat-opt:msHM11alphag}) are all optimal. So the next result states that $\mc{W}_{2,1}^\opt(\sG,\ua)_2$ is not empty, for $\sG=\sMa$ and $\ua$ as in (\ref{eqn:mat-opt:alpha}).
\begin{pro}\label{pro:mat-opt:WGalpha2nonempty}
There exists a virtual graded $\sMa$-module $W=\bigoplus_D W_D$ such that $\phi^W_\sg=\ms{H}^{(\sMa,\ua)}_\sg$ for each $\sg\in\sMa$.
\end{pro}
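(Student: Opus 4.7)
The plan is to mirror the strategy of the proof of Proposition \ref{pro:mat-opt:WG2nonempty}, but now restricting the $2.\sMb$-module of Proposition \ref{pro:mat-um3:WGalpha2nonempty} along a \emph{transitive} copy of $\sMa$ inside $2.\sMb$. Since the restriction along the transitive copy distinguishes elements of order $4$ and $8$ in the way indicated by the composition (\ref{eqn:int-mth:M11toS12}), the resulting virtual graded module will pick up non-trivial multiplier systems at precisely these orders, just as required by $\ua$ in (\ref{eqn:mat-opt:alpha}).

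Concretely, I would first fix a transitive subgroup $\sMa < 2.\sMb$ (using the fact that $\sMa$ has trivial Schur multiplier, so each $\sMa < \sMb$ lifts to a direct product $2 \times \sMa$ inside $2.\sMb$, and the transitive class is distinguished by the composition (\ref{eqn:int-mth:M11toS12}) being transitive). Then I would match conjugacy classes of this transitive $\sMa$ with classes of $2.\sMb$; the conjugacy class data of $\sMb$ in $\sf S_{12}$ (equivalently, the cycle-shape data) makes this matching unambiguous, and one checks in particular that the elements of orders $4$ and $8$ in the transitive $\sMa$ map to classes in $2.\sMb$ whose multipliers $\rho_\sg$, as tabulated in \S~4.8 of \cite{MUM}, are precisely the non-trivial characters appearing in (\ref{eqn:mat-opt:alpha}). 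For elements of order $\notin \{4,8\}$ the multiplier is trivial along the transitive embedding, exactly as for the intransitive case of Proposition \ref{pro:mat-opt:WG2nonempty}.

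Having identified the classes, the remaining content is to verify that the $\ms{H}^{(2.\sMb,\ua)}_\sg$ coincide with the expressions (\ref{eqn:mat-opt:msHM11alphag}) on the transitive $\sMa$. For orders $1,2,3,5,6$ this is identical to the intransitive case, giving $2\sHR_{o(\sg)}$, and for order $11$ the formula $2\sHR_{11}-\frac{11}{5}\varphi_{11}$ is inherited unchanged (elements of order $11$ in $\sMb$ lie in a single class whose lift behaves identically under the two embeddings). The substantive content is at orders $4$ and $8$, where one must compare the output of (\ref{eqn:mat-um3:psi1g}--\ref{eqn:mat-um3-msH2M12g}) at the relevant $2.\sMb$-classes with the theta-product expressions
$-2\th_{1,0}^0(\th_{4,0}^0-\th_{4,4}^0)^2\th_{1,0}$ and $-2\th_{1,0}^0(\th_{16,0}^0-\th_{16,16}^0)^2\th_{1,0}-4\varphi_{8|4}$. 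Both sides lie in the finite-dimensional space $\JJ_{2,1}(\GammaOJ(o(\sg)),\rho_\sg)$, so a finite check of Fourier coefficients up to the Sturm bound (together with the known shadows dictated by (\ref{eqn:mat-um3-msH2M12g})) suffices.

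The main obstacle will be the order-$8$ case: the McKay--Thompson series coming from $2.\sMb$ at order $8$ is determined by (\ref{eqn:mat-um3:psi1g}), but its level and character must be reconciled with $\GammaOJ(8)$ and the $\rho_\sg$ of (\ref{eqn:mat-opt:alpha}), and the correction term $-4\varphi_{8|4}$ in (\ref{eqn:mat-opt:msHM11alphag}) must be produced on the nose. Once this identification is in place, integrality of coefficients of the $\ms{H}^{(\sMa,\ua)}_\sg$ on the transitive copy is automatic from the existence of the $2.\sMb$-module, and the proof is complete by invoking the standard orthogonality argument to build $W$ from the traces, as in the proof of Proposition \ref{pro:mat-opt:WG2nonempty}.
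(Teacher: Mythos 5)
Your overall strategy --- restrict the $2.\sMb$-module of Proposition \ref{pro:mat-um3:WGalpha2nonempty} along a transitive copy of $\sMa$ and match characters using \S~4.8 of \cite{MUM} --- is the same as the paper's, and your identification of the multiplier systems at orders $4$ and $8$ is correct. But there is a genuine gap at the point you yourself flag as the ``main obstacle'': you hope that the correction term $-4\varphi_{8|4}$ in (\ref{eqn:mat-opt:msHM11alphag}) is ``produced on the nose'' by the restriction, and your only tool for checking this is a Sturm-bound comparison of Fourier coefficients. In fact the restriction does \emph{not} produce it: the trace function obtained by restricting along a transitive copy agrees with $\ms{H}^{(\sMa,\ua)}_\sg$ for all $\sg$ \emph{except} when $o(\sg)=8$, where it differs by exactly $4\varphi_{8|4}$. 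Your coefficient check would therefore detect a mismatch, and your argument as written gives no mechanism to bridge it --- you would be left having built a module whose order-$8$ trace function is the wrong one.

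The missing idea is that this discrepancy can be absorbed by modifying the restricted virtual graded module: one adds the virtual graded $\sMa$-module $\varphi_{8|4}\otimes(\chi_1-\chi_5-\chi_9+\chi_{10})$, which is legitimate because the Fourier coefficients of $\varphi_{8|4}$ are rational integers and because
\begin{gather*}
\chi_1-\chi_5-\chi_9+\chi_{10}
\end{gather*}
is a genuine virtual character of $\sMa$ that takes the value $4$ on elements of order $8$ and vanishes on all other elements (cf.\ Table \ref{tab:chars:m11}); its trace function is thus $4\varphi_{8|4}\delta_8(\sg)$, which cancels the discrepancy exactly. Note that the factor $4$ is not incidental: by the Thompson--Brauer argument used later in the proof of Theorem \ref{thm:mat-opt:tra}, $4\delta_8$ is the smallest rational multiple of $\delta_8$ that is a virtual character, so a discrepancy of, say, $\varphi_{8|4}$ or $2\varphi_{8|4}$ could not have been corrected this way. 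This representation-theoretic step, not the analytic identification of mock Jacobi forms, is the real content beyond Proposition \ref{pro:mat-opt:WG2nonempty}. (A minor further point: no ``orthogonality argument'' is needed to build $W$ from the traces --- the restriction of a virtual graded $2.\sMb$-module to a subgroup is automatically a virtual graded module, and the correction above is added at the level of modules, not traces.)
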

\begin{proof}
As in the proof of Proposition \ref{pro:mat-opt:WG2nonempty} we verify the existence of the $\sMa$-module in question by restricting the $2.\sMb$-module structure of Proposition \ref{pro:mat-um3:WGalpha2nonempty} to a suitable subgroup $\sMa<2.\sMb$, but in this case we choose a copy of $\sMa$ that maps to a transitive subgroup of ${\sf S}_{12}$ 
under the composition (\ref{eqn:int-mth:M11toS12}). (Such a copy of $\sMa$ is called transitive in \S~\ref{sec:int-mth}.)
Again using the explicit description of the characters $\rho_\sg$ for $\sg\in 2.\sMb$ that appears in \S~4.8 of \cite{MUM} we find that $\rho_\sg$ is trivial for every $\sg$ in a transitive copy of $\sMa$ except for when $o(\sg)\in \{4,8\}$. Moreover, the $\rho_\sg$ for $\sg$ in a transitive copy 
agree with the specification (\ref{eqn:mat-opt:alpha}). So $W$ as in Proposition \ref{pro:mat-um3:WGalpha2nonempty} restricts along a transitive copy of $\sMa$ to an $\sMa$-module with the same characters as in Proposition \ref{pro:mat-opt:WGalpha2nonempty}. 
Inspecting further we find that the trace functions obtained by restriction agree with those in (\ref{eqn:mat-opt:msHM11alphag}) except when $o(\sg)=8$, in which case the difference is $4\varphi_{8|4}$ (cf.\ (\ref{eqn:mat-opt:varphi84})). We finish the proof of Proposition \ref{pro:mat-opt:WGalpha2nonempty} by noting that the coefficients of $\varphi_{8|4}$ are rational integers, and observing that 
\begin{gather}\label{eqn:mat-opt:8d_pre}
	\chi_1 - \chi_5 -\chi_9+\chi_{10}
\end{gather}
is a virtual character that takes the value $4$ on elements of $\sMa$ of order $8$, and vanishes on all other elements. (Cf.\ Table \ref{tab:chars:m11}.)
\end{proof}

For $\chi$ an irreducible character of $\sMa$ define the multiplicity generating function $\ms{H}^{(\sMa,\ua)}_\chi$ by the right-hand side of (\ref{eqn:mat-opt:msHM11chi}) but with $W_D$ as in Proposition \ref{pro:mat-opt:WGalpha2nonempty}. The first few Fourier coefficients of each of the resulting holomorphic mock Jacobi forms $\ms{H}^{(\sMa,\ua)}_\chi$ are given in Table \ref{tab:mults:m11t}.

\begin{proof}[Proof of Theorem \ref{thm:mat-opt:tra}.]
Comparing (\ref{eqn:mat-opt:msHM11alphag}) with (\ref{eqn:mat-opt:msHM11g}) we see that $\ms{H}^\sMa_\sg=\ms{H}^{(\sMa,\ua)}_\sg$ for $o(\sg)\not \in \{4,8\}$. 
We have already seen that the $\ms{H}^\sMa_\sg$ are $2$-optimal, so the $\ms{H}^{(\sMa,\ua)}_\sg$ are $2$-optimal for $o(\sg)\not\in\{4,8\}$. The $2$-optimality of $\ms{H}^{(\sMa,\ua)}_\sg$ in the remaining cases follows from the definition of the theta series $\theta_{m,r}$ (cf.\ (\ref{eqn:prp-jac:thetamr})).
 We also check directly that $\ms{H}^{(\sMa,\ua)}_\sg$ transforms according to the characters specified in (\ref{eqn:mat-opt:alpha}) for $o(\sg)\in \{4,8\}$.
So we have 
$\opc^\opt_{2,1}(\sG,\ua)\leq 2$
according to Proposition \ref{pro:mat-opt:WGalpha2nonempty}. The inequality 
$\opc^\opt_{2,1}(\sG,\ua)\geq 2$
holds for the same reason we gave that 
$\opc^\opt_{2,1}(\sG)\geq 2$ 
in the proof of Theorem \ref{thm:mat-opt:tor}, 
so 
$\opc^\opt_{2,1}(\sG,\ua)=2$.

The computation of $\mc{L}^\opt_{2,1}(\sG,\ua)$ is similar to the computation of $\mc{L}^\opt_{2,1}(\sG)$ that appears in the proof of Theorem \ref{thm:mat-opt:tor}, with the main difference being that $S_{2,1}(\Gamma_\sg,\rho_\sg)_\ZZ$ is now non-zero also when $o(\sg)=8$. Specifically, we have
\begin{gather}
	S_{2,1}(\Gamma_\sg,\rho_\sg)_\ZZ =
	\begin{cases}
		\ZZ\varphi_{8|4}&\text{ for $o(\sg)=8$,}\\
		\ZZ\varphi_{11}&\text{ for $o(\sg)=11$,}\\
		0&\text{ else,}
	\end{cases}
\end{gather}
where $\varphi_{8|4}$ and $\varphi_{11}$ are as in (\ref{eqn:mat-opt:varphi84}) and (\ref{eqn:mat-opt:varphi11}), respectively.

To proceed with the determination of $\mc{L}^\opt_{2,1}(\sG,\ua)$
suppose that $x$ is a class function on $\sMa$ that vanishes on $\sg$ unless $o(\sg)\in\{8,11\}$. 
In this case Thompson's result on virtual characters (cf.\ the proof of Theorem \ref{thm:mat-opt:tor}) tells us that $x$ has integer values if and only if $x\in\ZZ4\delta_{8}+\ZZ11\delta_{11}$, where 
$\delta_{11}$ is as in (\ref{eqn:mat-opt:11d}) and $\delta_8$ is defined analogously.
Now the only irreducible characters of $\sG$ that are non-rational on elements of order $8$ or $11$ are $\chi_3$ and $\chi_4$, being non-rational only on elements of order $8$, and $\chi_6$ and $\chi_7$, being non-rational only on elements of order $11$. 
From this much we may conclude that 
$\mc{W}^\opt_{2,1}(\sG,\ua)_0$ is identified with the sublattice 
\begin{gather}\label{eqn:mat-opt:ZZvarphi84otimesRsG8plus}
\ZZ\varphi_{8|4}\otimes R(\sG)_{(8)} + \ZZ\varphi_{11}\otimes R(\sG)_{(11)}
\end{gather}
of $L_{2,1}(\sG)$, where 
$R(\sG)_{(11)}$ is as in (\ref{eqn:mat-opt:ZZvarphi11otimesRsG11}), and
$R(\sG)_{(8)}$ 
is defined analogously.

We have already computed $R(\sG)_{(11)}$ in the proof of Theorem \ref{thm:mat-opt:tor}. To compute $R(\sG)_{(8)}$ we use Table \ref{tab:chars:m11} to compute
\begin{gather}\label{eqn:mat-opt:8d}
	4\delta_{8} = \chi_1 - \chi_5 -\chi_9+\chi_{10}
\end{gather}
(cf.\ (\ref{eqn:mat-opt:8d_pre}))
and to see that $\chi_3-\chi_4$ vanishes away from elements of order $8$. Thus $R(\sG)_{(8)}$ is generated (as a $\ZZ$-module) by $4\delta_8$ and $\chi_3-\chi_4$, and thus we have 
verified 
the description of $\mc{L}^\opt_{2,1}(\sG,\ua)$ that appears in the statement of the theorem (see (\ref{eqn:mat-opt:tra})). Thus the theorem is proved.
\end{proof}

\subsection{Congruent Numbers}\label{sec:mat-art}

Our objective in this section is to establish a result that relates congruent numbers to the $\sMa$-modules of \S~\ref{sec:mat-opt}.
For the statement we note from Table \ref{tab:chars:m11} that $\sMa$ has a unique irreducible representation of dimension $55$.
Also, we take $\sG=\sMa$ in what follows, and we call to mind the optimal $\sMa$-modules 
$\mc{W}^\opt_{2,1}(\sG,\ua)$
that are classified, according to Proposition \ref{pro:prp-opt:mcWropmcLrop}, by Theorem \ref{thm:mat-opt:tra}.

\begin{thm}\label{thm:mat-art:cngnmb}
Let $D<0$ be square-free and satisfy $D\equiv 21\xmod{24}$. 
If there exists an $\sMa$-module $W=\bigoplus_D W_D$ in 
$\mc{W}^\opt_{2,1}(\sG,\ua)$ 
such that $W_D$ contains an irreducible $55$-dimensional submodule with non-zero multiplicity then $|D|$ is not a congruent number.
\end{thm}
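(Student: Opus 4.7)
The plan is to convert the multiplicity hypothesis into the non-vanishing of the Fourier coefficient $C_{\varphi_{8|4}}(D)$, and then translate that arithmetic statement into rank information for the quadratic twist $E\otimes|D|$ of the congruent-number elliptic curve via the Waldspurger--Tunnell machinery.

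For the first (group-theoretic) step, let $\chi$ denote the unique $55$-dimensional irreducible character of $\sMa$. Inspection of \eqref{eqn:mat-opt:tra} shows that $\chi$ does not appear as a tensor factor in any of the four generators of $\mc{L}^\opt_{2,1}(\sG,\ua)$, so by Proposition \ref{pro:prp-opt:mcWropmcLrop} the multiplicity $m_\chi(W_D)$ is independent of the representative of $W$ in $\mc{W}^\opt_{2,1}(\sG,\ua)_\opc$, and scales linearly in $\opc$. Writing
\[
m_\chi(W_D)=\tfrac{1}{|\sG|}\sum_{\sg\in\sG}\overline{\chi(\sg)}\,C^{(\sMa,\ua)}_\sg(D)
\]
by character orthogonality, and noting that the McKay--Thompson series in \eqref{eqn:mat-opt:msHM11alphag} depend only on $o(\sg)$, we may rewrite this as $\sum_N \alpha_N C_N(D)$, where $\alpha_N:=\tfrac{1}{|\sG|}\sum_{o(\sg)=N}\overline{\chi(\sg)}$ is computed from Table \ref{tab:chars:m11}. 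The hypothesis $D\equiv 21\pmod{24}$ now does most of the work: since $D\equiv 5\pmod 8$ is not a square modulo $8$, we have $\HGH_N(D)=0$ for every even $N$, which kills the $\sHR_N$ contributions at $N\in\{2,4,6\}$ and also kills the $N\in\{4,8\}$ theta terms in \eqref{eqn:mat-opt:msHM11alphag} (since those theta series are supported on residues incompatible with $D\equiv 21\pmod{24}$). For the remaining odd $N\in\{1,3,5,11\}$, \eqref{eqn:prp-cln:HHurND-Nprime} expresses $\HGH_N(D)$ as a Kronecker-symbol multiple of $\HGH(D)$, so the remaining $\sHR_N$-pieces, combined via \eqref{eqn:prp-fcs:msRN}, contribute a fixed rational multiple of $\HGH(D)$ whose coefficient I would check directly vanishes after pairing with $\chi$, using the character table. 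After this cancellation, the only surviving piece is the cuspidal contribution from $-4\varphi_{8|4}$ at $o(\sg)=8$, yielding
\[
m_\chi(W_D)=\kappa\,C_{\varphi_{8|4}}(D)
\]
for a fixed nonzero $\kappa\in\QQ$ that depends only on $\opc$ and on the value of $\chi$ on an element of order $8$.

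The final step is arithmetic-geometric. The form $\varphi_{8|4}\in S_{2,1}(\GammaOJ(32))$ has a theta decomposition \eqref{eqn:prp-jac:thetadecomp} whose component functions are weight $\tfrac32$ cusp forms, and via the Eichler--Zagier/Skoruppa--Zagier correspondence these are Shimura-equivalent to the unique normalized weight $2$ newform on $\Gamma_0(32)$, namely the cusp form $f_E$ attached to $E$. Tunnell's explicit form of Waldspurger's theorem then produces, for square-free $|D|\equiv 3\pmod{24}$ (precisely the class forced by $D\equiv 21\pmod{24}$), a proportionality of the shape $C_{\varphi_{8|4}}(D)^2 \propto L(E\otimes|D|,1)\sqrt{|D|}$. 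Since $E$ has complex multiplication by $\ZZ[\ii]$, the theorem of Coates--Wiles applied to the twist $E\otimes|D|$ upgrades the non-vanishing of this central $L$-value to the finiteness of $(E\otimes|D|)(\QQ)$. By the classical reformulation of the congruent number problem this is the statement that $|D|$ is not a congruent number, completing the argument.

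The hardest part will be the character-theoretic collapse in the middle paragraph: verifying unconditionally that all $\HGH(D)$-valued residual terms in $m_\chi(W_D)$ pair to zero against $\chi$ at $D\equiv 21\pmod{24}$, so that only the cuspidal contribution $C_{\varphi_{8|4}}(D)$ survives. A secondary concern is aligning $\varphi_{8|4}$ with the correct Atkin--Lehner eigenform in the $32$-new subspace, so that Tunnell's explicit proportionality is genuinely nontrivial on the residue class $|D|\equiv 3\pmod{24}$. Both issues appear to be computational rather than conceptual.
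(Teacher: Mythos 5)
Your overall strategy coincides with the paper's: reduce $m_{55}(W_D)$ to a multiple of the coefficient $C_{8|4}(D)$ of $\varphi_{8|4}$, identify that coefficient with Tunnell's weight-$\tfrac32$ form on the residue class $|D|\equiv 3\pmod 8$, and conclude from the non-vanishing of $L(E\otimes D,1)$ (the paper cites Kolyvagin where you cite Coates--Wiles; either works since $E$ has CM). But the reduction step as you describe it contains two concrete errors. First, the $55$-dimensional character is $\chi_{10}$, and it \emph{does} occur in a generator of $\mc{L}^\opt_{2,1}(\sG,\ua)$, namely $\varphi_{8|4}\otimes(\chi_1-\chi_5-\chi_9+\chi_{10})$ in (\ref{eqn:mat-opt:tra}). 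So $m_{55}(W_D)$ is not independent of the representative $W$: it shifts by integer multiples of $C_{8|4}(D)$ across the torsor, and your $\kappa$ is not fixed. This happens not to damage the implication you need (every $W$ still gives $m_{55}(W_D)=\kappa_W C_{8|4}(D)$ once the class-number terms are gone, so non-vanishing for \emph{some} $W$ forces $C_{8|4}(D)\neq 0$), but the independence claim you lean on is false.

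Second, and more seriously, the vanishing of $\HGH_N(D)$ for even $N$ does \emph{not} kill the contributions from $o(\sg)\in\{2,6\}$: by (\ref{eqn:prp-fcs:msRN}) the forms $\sHR_2$ and $\sHR_6$ are M\"obius combinations that involve $\sHGH_1=\sHGH$, so for instance $C_2(D)=-24\HGH(D)\neq 0$ when $D\equiv 5\pmod 8$ (compare the ${\sf 2A}$ column of Table \ref{tab:coeffs:m11t} at $|D|=3$). Your planned verification that the ``remaining odd-$N$'' residual terms pair to zero against $\chi$ would therefore be carried out on the wrong sum, and it is precisely this unverified cancellation that you flag as the crux. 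When all terms are retained, the residual class-number contribution works out to $\left(\frac D3\right)\left(1-\left(\frac D8\right)\right)\HGH(D)$, and the mechanism that annihilates it is not a character-theoretic cancellation at all: it is the arithmetic fact that $3\mid D$ (forced by $D\equiv 21\pmod{24}$), which makes the Kronecker symbol $\left(\frac D3\right)$ vanish. If $3\nmid D$ the term equals $\pm 2\HGH(D)\neq 0$, so identifying this factor is essential; as set up, your computation would not locate it.
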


\begin{proof}
Recall the mock Jacobi forms $\ms{H}^{(\sMa,\ua)}_\sg$ of (\ref{eqn:mat-opt:msHM11alphag}). Define $C^{(\sMa,\ua)}_\sg(D)$ for $\sg\in \sMa$ by requiring that $\ms{H}^{(\sMa,\ua)}_\sg(\tau,z)=\sum_{n,\ell}C^{(\sMa,\ua)}_\sg(\ell^2-4n)q^ny^\ell$, and to ease notation set $C_N(D):=C^{(\sMa,\ua)}_\sg(D)$ when $o(\sg)=N$.
Also define $C_{8|4}(D)$ so that $\varphi_{8|4}(\tau,z)=\sum_{n,\ell}C_{8|4}(\ell^2-4n)q^ny^\ell$ where $\varphi_{8|4}$ is as in (\ref{eqn:mat-opt:varphi84}).

Let $W\in 
\mc{W}^\opt_{2,1}(\sG,\ua)$ 
and $D<0$, and write $m_{55}(W_D)$ for the multiplicity of the $55$-dimensional irreducible representation in the $\sMa$-module $W_D$. Then
the description (\ref{eqn:mat-opt:tra}) of $\mc{L}^\opt_{2,1}(\sG,\ua)$ together with
a computation with Table \ref{tab:chars:m11} yields that $m_{55}(W_D)$ is given explicitly by
\begin{gather}\label{eqn:mat-art:m55WD-1}
\frac1{144}C_1(D)
-\frac1{48}C_2(D)
+\frac1{18}C_3(D)
-\frac1{8}C_4(D)
-\frac1{6}C_6(D)
+\frac1{4}C_8(D)
+\frac14\Re(\lambda_{8|4})C_{8|4}(D)
\end{gather}
for some 
$\lambda_{8|4}\in 4\ZZ+2a_2\ZZ$ (where $a_2$ is as in Table \ref{tab:chars:m11}).

Now $C_4(D)$ and $C_8(D)$ vanish for odd $D$, and the remaining $C_N(D)$ in (\ref{eqn:mat-art:m55WD-1}) may be expressed explicitly in terms of Hurwitz class numbers. For example, $C_1(D)=24\HGH(D)$ according to (\ref{eqn:mat-opt:msHM11alphag}), because $\sHR_1=12\sHGH$ (cf.\ (\ref{eqn:prp-fcs:msRN})). For $\sg$ of order $N=o(\sg)\in\{2,3\}$ the mock Jacobi form $\ms{H}^\sMa_\sg$ is a linear combination of $\sHGH$ and the Cohen--Eisenstein series $\sHCE_{o(\sg)}$ (cf.\ (\ref{eqn:prp-cln:msHCohN})), because these two forms span the space $\JJ_{2,1}(\Gamma_0^{\rm J}(N))$ in each case. So the formulas 
(\ref{eqn:prp-cln:HHurND-Nprime}) and (\ref{eqn:prp-cln:HCohND-Nprime})
allow us to write $C_N(D)$ in terms of class numbers, for $N\in \{2,3\}$. For $N=6$ we achieve an expression for $C_6(D)$ in terms of class numbers by observing that $\JJ_{2,1}(\Gamma_0^{\rm J}(6))$ is spanned by $\sHGH$, $\sHCE_2$, $\sHCE_3$, and the Jacobi form $\sum_{n,\ell}C_6'(\ell^2-4n)q^ny^\ell$, where $C_6'(D):=\HGH(36D)-\HGH(D)$. Noting that
\begin{gather}
	C_6(D)=
	\left(
	1
	-2\left(\frac{D}{8}\right)
	-3\left(\frac{D}{3}\right)
	+6\left(\frac{D}{24}\right)
	\right)\HGH(D)
\end{gather}
for $D$ fundamental, we ultimately obtain from (\ref{eqn:mat-art:m55WD-1}) that 
\begin{gather}
m_{55}(W_D)=\left(\frac{D}{3}\right)\left(1-\left(\frac{D}{8}\right)\right)\HGH(D)+\frac14\Re(\lambda_{8|4})C_{8|4}(D), 
\end{gather}
which simplifies to $m_{55}(W_D)=\frac14\Re(\lambda_{8|4})C_{8|4}(D)$ because $D\equiv 0\xmod 3$ by hypothesis.
So for the discriminants $D$ in question, the multiplicity $m_{55}(W_D)$ depends only on the contribution from $\varphi_{8|4}$. 

Let $E$ denote the elliptic curve over $\QQ$ defined by $y^2=x^3-x$, and let $E\otimes D$ denote the $D$-th quadratic twist of $E$, defined by $y^2=x^3-D^2x$. Then, as explained in the introduction of \cite{MR700775}, for example, $|D|$ is a congruent number if and only if $E\otimes D$ has positive rank (i.e. infinitely many rational points). Define $a(n)\in\ZZ$ for $n\geq 1$ by requiring that
\begin{gather}
	\frac{\eta(4\tau)^5\eta(16\tau)}{\eta(2\tau)^2\eta(8\tau)}=\sum_n a(n)q^n,
\end{gather}
where $\eta$ is as in (\ref{eqn:prp-jac:eta}).
Then, by Theorem 3 of \cite{MR700775} we have that $L(E\otimes D,1)=Ca(|D|)^2$ for some non-zero constant $C$. Also, $a(|D|)=C_{8|4}(D)$ for $D\equiv 5\xmod 8$. So for the discriminants $D$ under consideration, if $m_{55}(W_D)$ is non-zero then $L(E\otimes D,1)$ is non-zero, and this implies that $E\otimes D$ has finitely many rational points according to \cite{MR954295}. So $|D|$ is not a congruent number in this case. This completes the proof.
\end{proof}
\begin{rmk}
For Theorem \ref{thm:mat-art:cngnmb} it suffices to consider the $\sMa$-module $W$ of Proposition \ref{pro:mat-opt:WGalpha2nonempty}. Indeed, we see from Table \ref{tab:mults:m11t} that $3$ and $59$ are correctly flagged as non-congruent numbers. 
\end{rmk}
\begin{rmk}
If the Birch--Swinnerton-Dyer conjecture were known to be true (cf.\ \cite{MR2238272}) we would be able to prove a converse to Theorem \ref{thm:mat-art:cngnmb}, and use the $\sMa$-module $W$ to identify congruent numbers. For example, $D=-219$ satisfies the conditions of Theorem \ref{thm:mat-art:cngnmb}, and the $55$-dimensional irreducible representation has vanishing multiplicity in $W_D$. This is consistent with the fact that $219$ is congruent. (We have $219=\frac{ab}{2}$ when $a=\frac{264}{13}$ and $b=\frac{949}{44}$, for example.)
\end{rmk}

\appendix

\section{Characters and Coefficients}\label{tab:chars}

Here we give the character table of the sporadic group $\sMa$, and also some coefficients of the McKay--Thompson series and multiplicity generating functions for the graded infinite-dimensional virtual modules for this group that feature in \S~\ref{sec:mat}.

In Table \ref{tab:chars:m11} we use the notation $a_n:=\sqrt{-n}$ and $b_n:=-\frac12+\frac12\sqrt{-n}$.

\begin{table}[h]
\begin{center}
\begin{small}
\caption{Character table of $\sMa$}\label{tab:chars:m11}
\vspace{-6pt}
\begin{tabular}{c@{\;\;}|r@{\;\;}r@{\;\;}r@{\;\;}r@{\;\;}r@{\;\;}r@{\;\;}r@{\;\;}r@{\;\;}r@{\;\;}r}
\toprule
$[\sg]$&   ${\sf 1A}$&  ${\sf 2A}$&   ${\sf 3A}$&  ${\sf 4A}$&  ${\sf 5A}$& ${\sf  6A}$&  ${\sf 8A}$&  ${\sf 8B}$& ${\sf 11A}$&  ${\sf 11B}$\\
\midrule
${\chi}_{1}$&   $1$&   $1$&   $1$&   $1$&   $1$&   $1$&   $1$&   $1$&   $1$&   $1$\\
${\chi}_{2}$&   $10$&   $2$&   $1$&   $2$&   $0$&   $-1$&   $0$&   $0$&   $-1$&   $-1$\\
${\chi}_{3}$&   $10$&   $-2$&   $1$&   $0$&   $0$&   $1$&   $a_2$&   $-a_2$&   $-1$&   $-1$\\
${\chi}_{4}$&   $10$&   $-2$&   $1$&   $0$&   $0$&   $1$&   $-a_2$&   $a_2$&   $-1$&   $-1$\\
${\chi}_{5}$&   $11$&   $3$&   $2$&   $-1$&   $1$&   $0$&   $-1$&   $-1$&   $0$&   $0$\\
${\chi}_{6}$&   $16$&   $0$&   $-2$&   $0$&   $1$&   $0$&   $0$&   $0$&   $b_{11}$&   $\overline{b_{11}}$\\
${\chi}_{7}$&   $16$&   $0$&   $-2$&   $0$&   $1$&   $0$&   $0$&   $0$&   $\overline{b_{11}}$&   $b_{11}$\\
${\chi}_{8}$&   $44$&   $4$&   $-1$&   $0$&   $-1$&   $1$&   $0$&   $0$&   $0$&   $0$\\
${\chi}_{9}$&   $45$&   $-3$&   $0$&   $1$&   $0$&   $0$&   $-1$&   $-1$&   $1$&   $1$\\
${\chi}_{10}$&   $55$&   $-1$&   $1$&   $-1$&   $0$&   $-1$&   $1$&   $1$&   $0$&   $0$\\
\bottomrule
\end{tabular}
\end{small}
\end{center}
\end{table}

\clearpage

\begin{table}[ht]
\vspace{-8pt}
\begin{footnotesize}
\begin{center}
\caption{McKay--Thompson series $\mathscr{H}^{\sMa}_{\sg}$}\label{tab:coeffs:m11}
\vspace{-6pt}
\begin{tabular}{c|rrrrrrrr}\toprule
$|D|$	&{\sf 1A}	&{\sf 2A}	&{\sf 3A}	&{\sf 4A}	&{\sf 5A}	&{\sf 6A}	&{\sf 8AB}	&{\sf 11AB}	\\
\midrule
0	&$-2$	&$-2$	&$-2$	&$-2$	&$-2$	&$-2$	&$-2$	&$-2$	\\
3	&$8$	&$-8$	&$-1$	&$0$	&$-2$	&$1$	&$0$	&$-3$	\\
4	&$12$	&$-4$	&$-6$	&$-4$	&$2$	&$2$	&$0$	&$1$	\\
7	&$24$	&$8$	&$-12$	&$0$	&$-6$	&$-4$	&$0$	&$2$	\\
8	&$24$	&$-8$	&$6$	&$-8$	&$-6$	&$-2$	&$0$	&$2$	\\
11	&$24$	&$-24$	&$6$	&$0$	&$4$	&$-6$	&$0$	&$2$	\\
12	&$32$	&$0$	&$-4$	&$0$	&$-8$	&$0$	&$-8$	&$-1$	\\
15	&$48$	&$16$	&$-6$	&$0$	&$-2$	&$-2$	&$0$	&$-7$	\\
16	&$36$	&$4$	&$-18$	&$4$	&$6$	&$-2$	&$-4$	&$-8$	\\
19	&$24$	&$-24$	&$-12$	&$0$	&$4$	&$12$	&$0$	&$2$	\\
20	&$48$	&$-16$	&$12$	&$-16$	&$-2$	&$-4$	&$0$	&$-7$	\\
23	&$72$	&$24$	&$18$	&$0$	&$-18$	&$6$	&$0$	&$-5$	\\
24	&$48$	&$-16$	&$-6$	&$-16$	&$8$	&$2$	&$0$	&$4$	\\
27	&$32$	&$-32$	&$5$	&$0$	&$-8$	&$-5$	&$0$	&$-1$	\\
28	&$48$	&$16$	&$-24$	&$16$	&$-12$	&$-8$	&$8$	&$4$	\\
31	&$72$	&$24$	&$-36$	&$0$	&$12$	&$-12$	&$0$	&$-5$	\\
32	&$72$	&$8$	&$18$	&$8$	&$-18$	&$2$	&$-8$	&$6$	\\
35	&$48$	&$-48$	&$12$	&$0$	&$-2$	&$-12$	&$0$	&$4$	\\
36	&$60$	&$-20$	&$6$	&$-20$	&$10$	&$-2$	&$0$	&$-6$	\\
39	&$96$	&$32$	&$-12$	&$0$	&$16$	&$-4$	&$0$	&$8$	\\
40	&$48$	&$-16$	&$-24$	&$-16$	&$-2$	&$8$	&$0$	&$4$	\\
43	&$24$	&$-24$	&$-12$	&$0$	&$-6$	&$12$	&$0$	&$2$	\\
44	&$96$	&$0$	&$24$	&$0$	&$16$	&$0$	&$-24$	&$-3$	\\
47	&$120$	&$40$	&$30$	&$0$	&$-30$	&$10$	&$0$	&$-12$	\\
48	&$80$	&$16$	&$-10$	&$16$	&$-20$	&$-2$	&$0$	&$-8$	\\
51	&$48$	&$-48$	&$-6$	&$0$	&$8$	&$6$	&$0$	&$4$	\\
52	&$48$	&$-16$	&$-24$	&$-16$	&$-12$	&$8$	&$0$	&$4$	\\
55	&$96$	&$32$	&$-48$	&$0$	&$-4$	&$-16$	&$0$	&$-3$	\\
56	&$96$	&$-32$	&$24$	&$-32$	&$16$	&$-8$	&$0$	&$-14$	\\
59	&$72$	&$-72$	&$18$	&$0$	&$12$	&$-18$	&$0$	&$-5$	\\
60	&$96$	&$32$	&$-12$	&$32$	&$-4$	&$-4$	&$16$	&$-3$	\\
63	&$120$	&$40$	&$12$	&$0$	&$-30$	&$4$	&$0$	&$10$	\\
64	&$84$	&$20$	&$-42$	&$20$	&$14$	&$-10$	&$4$	&$-4$	\\
67	&$24$	&$-24$	&$-12$	&$0$	&$-6$	&$12$	&$0$	&$-9$	\\
68	&$96$	&$-32$	&$24$	&$-32$	&$-24$	&$-8$	&$0$	&$8$	\\
71	&$168$	&$56$	&$42$	&$0$	&$28$	&$14$	&$0$	&$-19$	\\
72	&$72$	&$-24$	&$18$	&$-24$	&$-18$	&$-6$	&$0$	&$6$	\\
75	&$56$	&$-56$	&$-7$	&$0$	&$6$	&$7$	&$0$	&$-10$	\\
76	&$96$	&$0$	&$-48$	&$0$	&$16$	&$0$	&$-24$	&$8$	\\
79	&$120$	&$40$	&$-60$	&$0$	&$20$	&$-20$	&$0$	&$10$	\\
80	&$144$	&$16$	&$36$	&$16$	&$-6$	&$4$	&$-16$	&$-10$	\\
83	&$72$	&$-72$	&$18$	&$0$	&$-18$	&$-18$	&$0$	&$6$	\\
84	&$96$	&$-32$	&$-12$	&$-32$	&$16$	&$4$	&$0$	&$8$	\\
87	&$144$	&$48$	&$-18$	&$0$	&$-36$	&$-6$	&$0$	&$12$	\\
88	&$48$	&$-16$	&$-24$	&$-16$	&$-12$	&$8$	&$0$	&$4$	\\
91	&$48$	&$-48$	&$-24$	&$0$	&$8$	&$24$	&$0$	&$4$	\\
92	&$144$	&$48$	&$36$	&$48$	&$-36$	&$12$	&$24$	&$-21$	\\
95	&$192$	&$64$	&$48$	&$0$	&$-8$	&$16$	&$0$	&$16$	\\
96	&$144$	&$16$	&$-18$	&$16$	&$24$	&$-2$	&$-16$	&$12$	\\
99	&$72$	&$-72$	&$18$	&$0$	&$12$	&$-18$	&$0$	&$-5$	\\
100	&$60$	&$-20$	&$-30$	&$-20$	&$10$	&$10$	&$0$	&$-6$	\\
103	&$120$	&$40$	&$-60$	&$0$	&$-30$	&$-20$	&$0$	&$-12$	\\
104	&$144$	&$-48$	&$36$	&$-48$	&$24$	&$-12$	&$0$	&$-10$	\\
107	&$72$	&$-72$	&$18$	&$0$	&$-18$	&$-18$	&$0$	&$6$	\\
108	&$128$	&$0$	&$20$	&$0$	&$-32$	&$0$	&$-32$	&$-15$	\\
 \bottomrule
\end{tabular}
\end{center}
\end{footnotesize}
\end{table}

\begin{table}[ht]
\vspace{-8pt}
\begin{footnotesize}
\begin{center}
\caption{McKay--Thompson series $\ms{H}^{(\sMa,\ua)}_\sg$}\label{tab:coeffs:m11t}
\vspace{-6pt}
\begin{tabular}{c|rrrrrrrr}\toprule
$|D|$	&{\sf 1A}	&{\sf 2A}	&{\sf 3A}	&{\sf 4A}	&{\sf 5A}	&{\sf 6A}	&{\sf 8AB}	&{\sf 11AB}	\\
\midrule
0	&$-2$	&$-2$	&$-2$	&$-2$	&$-2$	&$-2$	&$-2$	&$-2$	\\
3	&$8$	&$-8$	&$-1$	&$0$	&$-2$	&$1$	&$-4$	&$-3$	\\
4	&$12$	&$-4$	&$-6$	&$4$	&$2$	&$2$	&$-4$	&$1$	\\
7	&$24$	&$8$	&$-12$	&$0$	&$-6$	&$-4$	&$0$	&$2$	\\
8	&$24$	&$-8$	&$6$	&$8$	&$-6$	&$-2$	&$0$	&$2$	\\
11	&$24$	&$-24$	&$6$	&$0$	&$4$	&$-6$	&$4$	&$2$	\\
12	&$32$	&$0$	&$-4$	&$-16$	&$-8$	&$0$	&$0$	&$-1$	\\
15	&$48$	&$16$	&$-6$	&$0$	&$-2$	&$-2$	&$0$	&$-7$	\\
16	&$36$	&$4$	&$-18$	&$-12$	&$6$	&$-2$	&$4$	&$-8$	\\
19	&$24$	&$-24$	&$-12$	&$0$	&$4$	&$12$	&$4$	&$2$	\\
20	&$48$	&$-16$	&$12$	&$16$	&$-2$	&$-4$	&$16$	&$-7$	\\
23	&$72$	&$24$	&$18$	&$0$	&$-18$	&$6$	&$0$	&$-5$	\\
24	&$48$	&$-16$	&$-6$	&$16$	&$8$	&$2$	&$0$	&$4$	\\
27	&$32$	&$-32$	&$5$	&$0$	&$-8$	&$-5$	&$0$	&$-1$	\\
28	&$48$	&$16$	&$-24$	&$0$	&$-12$	&$-8$	&$0$	&$4$	\\
31	&$72$	&$24$	&$-36$	&$0$	&$12$	&$-12$	&$0$	&$-5$	\\
32	&$72$	&$8$	&$18$	&$-24$	&$-18$	&$2$	&$8$	&$6$	\\
35	&$48$	&$-48$	&$12$	&$0$	&$-2$	&$-12$	&$8$	&$4$	\\
36	&$60$	&$-20$	&$6$	&$20$	&$10$	&$-2$	&$-20$	&$-6$	\\
39	&$96$	&$32$	&$-12$	&$0$	&$16$	&$-4$	&$0$	&$8$	\\
40	&$48$	&$-16$	&$-24$	&$16$	&$-2$	&$8$	&$0$	&$4$	\\
43	&$24$	&$-24$	&$-12$	&$0$	&$-6$	&$12$	&$-12$	&$2$	\\
44	&$96$	&$0$	&$24$	&$-48$	&$16$	&$0$	&$0$	&$-3$	\\
47	&$120$	&$40$	&$30$	&$0$	&$-30$	&$10$	&$0$	&$-12$	\\
48	&$80$	&$16$	&$-10$	&$-16$	&$-20$	&$-2$	&$-16$	&$-8$	\\
51	&$48$	&$-48$	&$-6$	&$0$	&$8$	&$6$	&$-8$	&$4$	\\
52	&$48$	&$-16$	&$-24$	&$16$	&$-12$	&$8$	&$16$	&$4$	\\
55	&$96$	&$32$	&$-48$	&$0$	&$-4$	&$-16$	&$0$	&$-3$	\\
56	&$96$	&$-32$	&$24$	&$32$	&$16$	&$-8$	&$0$	&$-14$	\\
59	&$72$	&$-72$	&$18$	&$0$	&$12$	&$-18$	&$-4$	&$-5$	\\
60	&$96$	&$32$	&$-12$	&$0$	&$-4$	&$-4$	&$0$	&$-3$	\\
63	&$120$	&$40$	&$12$	&$0$	&$-30$	&$4$	&$0$	&$10$	\\
64	&$84$	&$20$	&$-42$	&$-12$	&$14$	&$-10$	&$-12$	&$-4$	\\
67	&$24$	&$-24$	&$-12$	&$0$	&$-6$	&$12$	&$4$	&$-9$	\\
68	&$96$	&$-32$	&$24$	&$32$	&$-24$	&$-8$	&$-32$	&$8$	\\
71	&$168$	&$56$	&$42$	&$0$	&$28$	&$14$	&$0$	&$-19$	\\
72	&$72$	&$-24$	&$18$	&$24$	&$-18$	&$-6$	&$0$	&$6$	\\
75	&$56$	&$-56$	&$-7$	&$0$	&$6$	&$7$	&$4$	&$-10$	\\
76	&$96$	&$0$	&$-48$	&$-48$	&$16$	&$0$	&$0$	&$8$	\\
79	&$120$	&$40$	&$-60$	&$0$	&$20$	&$-20$	&$0$	&$10$	\\
80	&$144$	&$16$	&$36$	&$-48$	&$-6$	&$4$	&$16$	&$-10$	\\
83	&$72$	&$-72$	&$18$	&$0$	&$-18$	&$-18$	&$-4$	&$6$	\\
84	&$96$	&$-32$	&$-12$	&$32$	&$16$	&$4$	&$32$	&$8$	\\
87	&$144$	&$48$	&$-18$	&$0$	&$-36$	&$-6$	&$0$	&$12$	\\
88	&$48$	&$-16$	&$-24$	&$16$	&$-12$	&$8$	&$0$	&$4$	\\
91	&$48$	&$-48$	&$-24$	&$0$	&$8$	&$24$	&$8$	&$4$	\\
92	&$144$	&$48$	&$36$	&$0$	&$-36$	&$12$	&$0$	&$-21$	\\
95	&$192$	&$64$	&$48$	&$0$	&$-8$	&$16$	&$0$	&$16$	\\
96	&$144$	&$16$	&$-18$	&$-48$	&$24$	&$-2$	&$16$	&$12$	\\
99	&$72$	&$-72$	&$18$	&$0$	&$12$	&$-18$	&$-4$	&$-5$	\\
100	&$60$	&$-20$	&$-30$	&$20$	&$10$	&$10$	&$-20$	&$-6$	\\
103	&$120$	&$40$	&$-60$	&$0$	&$-30$	&$-20$	&$0$	&$-12$	\\
104	&$144$	&$-48$	&$36$	&$48$	&$24$	&$-12$	&$0$	&$-10$	\\
107	&$72$	&$-72$	&$18$	&$0$	&$-18$	&$-18$	&$12$	&$6$	\\
108	&$128$	&$0$	&$20$	&$-64$	&$-32$	&$0$	&$0$	&$-15$	\\
 \bottomrule
\end{tabular}
\end{center}
\end{footnotesize}
\end{table}
\clearpage

\begin{table}[ht]
\vspace{-8pt}
\begin{footnotesize}
\begin{center}
\caption{Multiplicity generating functions $\mathscr{H}^{\sMa}_{\chi}$}\label{tab:mults:m11}
\vspace{-6pt}
\begin{tabular}{c|rrrrrrrrrr}\toprule
$|D|$	&$1$	&$10$	&$10'$	&$\overline{10'}$	&$11$	&$16$	&$\overline{16}$	&$44$	&$45$	&$55$	\\
\midrule
0	&$-2$	&$0$	&$0$	&$0$	&$0$	&$0$	&$0$	&$0$	&$0$	&$0$	\\
3	&$-1$	&$0$	&$1$	&$1$	&$-1$	&$0$	&$0$	&$0$	&$0$	&$0$	\\
4	&$0$	&$-2$	&$0$	&$0$	&$0$	&$1$	&$1$	&$0$	&$0$	&$0$	\\
7	&$-2$	&$0$	&$-2$	&$-2$	&$-2$	&$0$	&$0$	&$2$	&$0$	&$0$	\\
8	&$-2$	&$-2$	&$0$	&$0$	&$0$	&$-2$	&$-2$	&$0$	&$0$	&$2$	\\
11	&$0$	&$0$	&$0$	&$0$	&$0$	&$0$	&$0$	&$-4$	&$2$	&$2$	\\
12	&$-4$	&$0$	&$0$	&$0$	&$0$	&$-1$	&$-1$	&$2$	&$2$	&$-2$	\\
15	&$-2$	&$2$	&$0$	&$0$	&$0$	&$1$	&$1$	&$2$	&$-2$	&$0$	\\
16	&$-2$	&$2$	&$0$	&$0$	&$0$	&$4$	&$4$	&$0$	&$0$	&$-2$	\\
19	&$2$	&$-4$	&$2$	&$2$	&$-2$	&$2$	&$2$	&$0$	&$2$	&$-2$	\\
20	&$-4$	&$-2$	&$2$	&$2$	&$2$	&$-1$	&$-1$	&$-2$	&$-2$	&$4$	\\
23	&$-2$	&$2$	&$2$	&$2$	&$0$	&$-5$	&$-5$	&$6$	&$-2$	&$0$	\\
24	&$0$	&$-6$	&$0$	&$0$	&$2$	&$2$	&$2$	&$-2$	&$0$	&$2$	\\
27	&$-3$	&$0$	&$1$	&$1$	&$-3$	&$-2$	&$-2$	&$-2$	&$2$	&$2$	\\
28	&$0$	&$4$	&$-4$	&$-4$	&$-8$	&$0$	&$0$	&$4$	&$0$	&$0$	\\
31	&$-2$	&$2$	&$-4$	&$-4$	&$0$	&$7$	&$7$	&$0$	&$-2$	&$0$	\\
32	&$-2$	&$2$	&$0$	&$0$	&$0$	&$-6$	&$-6$	&$4$	&$4$	&$-2$	\\
35	&$-2$	&$0$	&$0$	&$0$	&$-2$	&$-2$	&$-2$	&$-6$	&$4$	&$4$	\\
36	&$-2$	&$-4$	&$2$	&$2$	&$4$	&$2$	&$2$	&$-4$	&$-2$	&$4$	\\
39	&$4$	&$0$	&$-4$	&$-4$	&$4$	&$4$	&$4$	&$0$	&$0$	&$0$	\\
40	&$-2$	&$-8$	&$0$	&$0$	&$-2$	&$2$	&$2$	&$2$	&$0$	&$0$	\\
43	&$0$	&$-4$	&$2$	&$2$	&$-4$	&$0$	&$0$	&$2$	&$2$	&$-2$	\\
44	&$-2$	&$2$	&$2$	&$2$	&$12$	&$1$	&$1$	&$-4$	&$6$	&$-4$	\\
47	&$-4$	&$4$	&$4$	&$4$	&$0$	&$-8$	&$-8$	&$10$	&$-4$	&$0$	\\
48	&$-4$	&$6$	&$0$	&$0$	&$-6$	&$-2$	&$-2$	&$6$	&$0$	&$-2$	\\
51	&$2$	&$-4$	&$2$	&$2$	&$-2$	&$2$	&$2$	&$-4$	&$4$	&$0$	\\
52	&$-4$	&$-8$	&$0$	&$0$	&$-4$	&$0$	&$0$	&$4$	&$0$	&$0$	\\
55	&$-6$	&$2$	&$-6$	&$-6$	&$-4$	&$5$	&$5$	&$4$	&$-2$	&$0$	\\
56	&$-4$	&$-4$	&$4$	&$4$	&$8$	&$2$	&$2$	&$-8$	&$-4$	&$8$	\\
59	&$-2$	&$2$	&$2$	&$2$	&$0$	&$1$	&$1$	&$-12$	&$4$	&$6$	\\
60	&$6$	&$10$	&$-2$	&$-2$	&$-8$	&$1$	&$1$	&$4$	&$-2$	&$0$	\\
63	&$-2$	&$0$	&$-2$	&$-2$	&$-2$	&$-8$	&$-8$	&$10$	&$0$	&$0$	\\
64	&$2$	&$6$	&$-4$	&$-4$	&$-4$	&$8$	&$8$	&$0$	&$0$	&$-2$	\\
67	&$-2$	&$-2$	&$4$	&$4$	&$-4$	&$1$	&$1$	&$2$	&$0$	&$-2$	\\
68	&$-8$	&$-8$	&$0$	&$0$	&$0$	&$-8$	&$-8$	&$0$	&$0$	&$8$	\\
71	&$8$	&$6$	&$6$	&$6$	&$14$	&$3$	&$3$	&$0$	&$-6$	&$0$	\\
72	&$-6$	&$-6$	&$0$	&$0$	&$0$	&$-6$	&$-6$	&$0$	&$0$	&$6$	\\
75	&$-1$	&$-2$	&$5$	&$5$	&$-3$	&$3$	&$3$	&$-4$	&$2$	&$0$	\\
76	&$-4$	&$-4$	&$-4$	&$-4$	&$4$	&$8$	&$8$	&$0$	&$8$	&$-8$	\\
79	&$0$	&$0$	&$-10$	&$-10$	&$0$	&$10$	&$10$	&$0$	&$0$	&$0$	\\
80	&$-2$	&$8$	&$4$	&$4$	&$6$	&$-4$	&$-4$	&$2$	&$4$	&$-4$	\\
83	&$-6$	&$0$	&$0$	&$0$	&$-6$	&$-6$	&$-6$	&$-6$	&$6$	&$6$	\\
84	&$0$	&$-12$	&$0$	&$0$	&$4$	&$4$	&$4$	&$-4$	&$0$	&$4$	\\
87	&$-6$	&$0$	&$-6$	&$-6$	&$-6$	&$-6$	&$-6$	&$12$	&$0$	&$0$	\\
88	&$-4$	&$-8$	&$0$	&$0$	&$-4$	&$0$	&$0$	&$4$	&$0$	&$0$	\\
91	&$4$	&$-8$	&$4$	&$4$	&$-4$	&$4$	&$4$	&$0$	&$4$	&$-4$	\\
92	&$6$	&$18$	&$6$	&$6$	&$-12$	&$-9$	&$-9$	&$12$	&$-6$	&$0$	\\
95	&$8$	&$0$	&$0$	&$0$	&$8$	&$-8$	&$-8$	&$8$	&$0$	&$0$	\\
96	&$4$	&$2$	&$-4$	&$-4$	&$6$	&$6$	&$6$	&$-2$	&$8$	&$-6$	\\
99	&$-2$	&$2$	&$2$	&$2$	&$0$	&$1$	&$1$	&$-12$	&$4$	&$6$	\\
100	&$-2$	&$-8$	&$2$	&$2$	&$0$	&$6$	&$6$	&$0$	&$-2$	&$0$	\\
103	&$-14$	&$4$	&$-6$	&$-6$	&$-10$	&$2$	&$2$	&$10$	&$-4$	&$0$	\\
104	&$-4$	&$-8$	&$4$	&$4$	&$12$	&$2$	&$2$	&$-12$	&$-4$	&$12$	\\
107	&$-6$	&$0$	&$0$	&$0$	&$-6$	&$-6$	&$-6$	&$-6$	&$6$	&$6$	\\
108	&$-16$	&$4$	&$4$	&$4$	&$4$	&$-7$	&$-7$	&$6$	&$6$	&$-6$	\\
    \bottomrule
\end{tabular}
\end{center}
\end{footnotesize}
\end{table}

\begin{table}[ht]
\vspace{-8pt}
\begin{footnotesize}
\begin{center}
\caption{Multiplicity generating functions $\mathscr{H}^{(\sMa,\ua)}_{\chi}$}\label{tab:mults:m11t}
\vspace{-6pt}
\begin{tabular}{c|rrrrrrrrrr}\toprule
$|D|$	&$1$	&$10$	&$10'$	&$\overline{10'}$	&$11$	&$16$	&$\overline{16}$	&$44$	&$45$	&$55$	\\
\midrule
0	&$-2$	&$0$	&$0$	&$0$	&$0$	&$0$	&$0$	&$0$	&$0$	&$0$	\\
3	&$-2$	&$0$	&$1$	&$1$	&$0$	&$0$	&$0$	&$0$	&$1$	&$-1$	\\
4	&$0$	&$0$	&$0$	&$0$	&$0$	&$1$	&$1$	&$0$	&$2$	&$-2$	\\
7	&$-2$	&$0$	&$-2$	&$-2$	&$-2$	&$0$	&$0$	&$2$	&$0$	&$0$	\\
8	&$0$	&$2$	&$0$	&$0$	&$-2$	&$-2$	&$-2$	&$0$	&$2$	&$0$	\\
11	&$1$	&$0$	&$0$	&$0$	&$-1$	&$0$	&$0$	&$-4$	&$1$	&$3$	\\
12	&$-4$	&$-4$	&$0$	&$0$	&$0$	&$-1$	&$-1$	&$2$	&$-2$	&$2$	\\
15	&$-2$	&$2$	&$0$	&$0$	&$0$	&$1$	&$1$	&$2$	&$-2$	&$0$	\\
16	&$-2$	&$-2$	&$0$	&$0$	&$0$	&$4$	&$4$	&$0$	&$-4$	&$2$	\\
19	&$3$	&$-4$	&$2$	&$2$	&$-3$	&$2$	&$2$	&$0$	&$1$	&$-1$	\\
20	&$4$	&$6$	&$2$	&$2$	&$-6$	&$-1$	&$-1$	&$-2$	&$-2$	&$4$	\\
23	&$-2$	&$2$	&$2$	&$2$	&$0$	&$-5$	&$-5$	&$6$	&$-2$	&$0$	\\
24	&$4$	&$2$	&$0$	&$0$	&$-2$	&$2$	&$2$	&$-2$	&$4$	&$-2$	\\
27	&$-3$	&$0$	&$1$	&$1$	&$-3$	&$-2$	&$-2$	&$-2$	&$2$	&$2$	\\
28	&$-4$	&$0$	&$-4$	&$-4$	&$-4$	&$0$	&$0$	&$4$	&$0$	&$0$	\\
31	&$-2$	&$2$	&$-4$	&$-4$	&$0$	&$7$	&$7$	&$0$	&$-2$	&$0$	\\
32	&$-2$	&$-6$	&$0$	&$0$	&$0$	&$-6$	&$-6$	&$4$	&$-4$	&$6$	\\
35	&$0$	&$0$	&$0$	&$0$	&$-4$	&$-2$	&$-2$	&$-6$	&$2$	&$6$	\\
36	&$-2$	&$6$	&$2$	&$2$	&$4$	&$2$	&$2$	&$-4$	&$8$	&$-6$	\\
39	&$4$	&$0$	&$-4$	&$-4$	&$4$	&$4$	&$4$	&$0$	&$0$	&$0$	\\
40	&$2$	&$0$	&$0$	&$0$	&$-6$	&$2$	&$2$	&$2$	&$4$	&$-4$	\\
43	&$-3$	&$-4$	&$2$	&$2$	&$-1$	&$0$	&$0$	&$2$	&$5$	&$-5$	\\
44	&$-2$	&$-10$	&$2$	&$2$	&$12$	&$1$	&$1$	&$-4$	&$-6$	&$8$	\\
47	&$-4$	&$4$	&$4$	&$4$	&$0$	&$-8$	&$-8$	&$10$	&$-4$	&$0$	\\
48	&$-12$	&$-2$	&$0$	&$0$	&$2$	&$-2$	&$-2$	&$6$	&$0$	&$-2$	\\
51	&$0$	&$-4$	&$2$	&$2$	&$0$	&$2$	&$2$	&$-4$	&$6$	&$-2$	\\
52	&$4$	&$0$	&$0$	&$0$	&$-12$	&$0$	&$0$	&$4$	&$0$	&$0$	\\
55	&$-6$	&$2$	&$-6$	&$-6$	&$-4$	&$5$	&$5$	&$4$	&$-2$	&$0$	\\
56	&$4$	&$12$	&$4$	&$4$	&$0$	&$2$	&$2$	&$-8$	&$4$	&$0$	\\
59	&$-3$	&$2$	&$2$	&$2$	&$1$	&$1$	&$1$	&$-12$	&$5$	&$5$	\\
60	&$-2$	&$2$	&$-2$	&$-2$	&$0$	&$1$	&$1$	&$4$	&$-2$	&$0$	\\
63	&$-2$	&$0$	&$-2$	&$-2$	&$-2$	&$-8$	&$-8$	&$10$	&$0$	&$0$	\\
64	&$-6$	&$-2$	&$-4$	&$-4$	&$4$	&$8$	&$8$	&$0$	&$0$	&$-2$	\\
67	&$-1$	&$-2$	&$4$	&$4$	&$-5$	&$1$	&$1$	&$2$	&$-1$	&$-1$	\\
68	&$-8$	&$8$	&$0$	&$0$	&$0$	&$-8$	&$-8$	&$0$	&$16$	&$-8$	\\
71	&$8$	&$6$	&$6$	&$6$	&$14$	&$3$	&$3$	&$0$	&$-6$	&$0$	\\
72	&$0$	&$6$	&$0$	&$0$	&$-6$	&$-6$	&$-6$	&$0$	&$6$	&$0$	\\
75	&$0$	&$-2$	&$5$	&$5$	&$-4$	&$3$	&$3$	&$-4$	&$1$	&$1$	\\
76	&$-4$	&$-16$	&$-4$	&$-4$	&$4$	&$8$	&$8$	&$0$	&$-4$	&$4$	\\
79	&$0$	&$0$	&$-10$	&$-10$	&$0$	&$10$	&$10$	&$0$	&$0$	&$0$	\\
80	&$-2$	&$-8$	&$4$	&$4$	&$6$	&$-4$	&$-4$	&$2$	&$-12$	&$12$	\\
83	&$-7$	&$0$	&$0$	&$0$	&$-5$	&$-6$	&$-6$	&$-6$	&$7$	&$5$	\\
84	&$16$	&$4$	&$0$	&$0$	&$-12$	&$4$	&$4$	&$-4$	&$0$	&$4$	\\
87	&$-6$	&$0$	&$-6$	&$-6$	&$-6$	&$-6$	&$-6$	&$12$	&$0$	&$0$	\\
88	&$0$	&$0$	&$0$	&$0$	&$-8$	&$0$	&$0$	&$4$	&$4$	&$-4$	\\
91	&$6$	&$-8$	&$4$	&$4$	&$-6$	&$4$	&$4$	&$0$	&$2$	&$-2$	\\
92	&$-6$	&$6$	&$6$	&$6$	&$0$	&$-9$	&$-9$	&$12$	&$-6$	&$0$	\\
95	&$8$	&$0$	&$0$	&$0$	&$8$	&$-8$	&$-8$	&$8$	&$0$	&$0$	\\
96	&$4$	&$-14$	&$-4$	&$-4$	&$6$	&$6$	&$6$	&$-2$	&$-8$	&$10$	\\
99	&$-3$	&$2$	&$2$	&$2$	&$1$	&$1$	&$1$	&$-12$	&$5$	&$5$	\\
100	&$-2$	&$2$	&$2$	&$2$	&$0$	&$6$	&$6$	&$0$	&$8$	&$-10$	\\
103	&$-14$	&$4$	&$-6$	&$-6$	&$-10$	&$2$	&$2$	&$10$	&$-4$	&$0$	\\
104	&$8$	&$16$	&$4$	&$4$	&$0$	&$2$	&$2$	&$-12$	&$8$	&$0$	\\
107	&$-3$	&$0$	&$0$	&$0$	&$-9$	&$-6$	&$-6$	&$-6$	&$3$	&$9$	\\
108	&$-16$	&$-12$	&$4$	&$4$	&$4$	&$-7$	&$-7$	&$6$	&$-10$	&$10$	\\    
\bottomrule
\end{tabular}
\end{center}
\end{footnotesize}
\end{table}

\clearpage


\setstretch{1.08}
\addcontentsline{toc}{section}{References}
\bibliographystyle{plain}

\end{document}